\newtheorem{theorem}{Theorem}[section]
\newtheorem{lemma}[theorem]{Lemma}
\newtheorem{corollary}[theorem]{Corollary}
\newtheorem{proposition}[theorem]{Proposition}
\newtheorem{remark}[theorem]{Remark}
\newtheorem{definition}[theorem]{Definition}
\theoremstyle{remark}
\numberwithin{equation}{section}
\newcommand{\cz}{{\mathbb C}}
\newcommand{\nz}{{\mathbb N}}
\newcommand{\rz}{{\mathbb R}}
\newcommand{\calA}{\mathcal{A}}
\newcommand{\calC}{\mathcal{C}}
\newcommand{\calD}{\mathcal{D}}
\newcommand{\calH}{\mathcal{H}}
\newcommand{\calL}{\mathcal{L}}
\newcommand{\calS}{\mathcal{S}}
\newcommand{\scrC}{\mathscr{C}}
\newcommand{\scrL}{\mathscr{L}}
\newcommand{\dbar}{d\hspace*{-0.08em}\bar{}\hspace*{0.1em}}
\newcommand{\eps}{\varepsilon}
\newcommand{\forget}[1]{}
\newcommand{\lra}{\longrightarrow}
\newcommand{\re}{\text{\rm Re}\,}
\newcommand{\smsum}{\mathop{\mbox{\large$\sum$}}}
\newcommand{\spk}[1]{\langle#1\rangle}
\newcommand{\st}{\mbox{\boldmath$\;|\;$\unboldmath}}
\newcommand{\wt}{\widetilde}
\begin{document}

%%%%%%%%%%%%%%%%%%%%%%%%%%%%%%%%%%%%%%%%%%%%%%%%%%%%%%%%%%%%%%%%%%%%%%%%%%%%%%%
%%%%%%%%%%%%%%%%%%%%%%%%%%%%%%%%%%%%%%%%%%%%%%%%%%%%%%%%%%%%%%%%%%%%%%%%%%%%%%%
\title[$H_\infty$-calculus for Douglis-Nirenberg systems of mild regularity]%
{Bounded $H_\infty$-calculus for pseudodifferential\\ Douglis-Nirenberg systems of 
mild regularity}

\author{R.\ Denk}
\author{J.\ Saal}
\address{Universit\"at Konstanz, Fachbereich f\"ur Mathematik und Statistik,
         78457 Konstanz, Germany}
\email{robert.denk@uni-konstanz.de, juergen.saal@uni-konstanz.de}
\author{J.\ Seiler}
\address{Universit\"at Hannover, Institut f\"ur Angewandte Mathematik,
         Welfengarten 1, 30167 Hannover, Germany}
\email{seiler@ifam.uni-hannover.de}

\begin{abstract}
We consider pseudodifferential Douglis-Nirenberg systems on $\rz^n$ with components belonging to the 
standard H\"ormander class $S^*_{1,\delta}(\rz^n\times\rz^n)$, $0\le\delta<1$. Parameter-ellipticity 
with respect to a subsector $\Lambda\subset\cz$ is introduced and shown to imply the existence of a 
bounded $H_\infty$-calculus in suitable scales of Sobolev, Besov, and H\"older spaces. We also admit non 
pseudodifferential perturbations. Applications concern systems with coefficients of mild H\"older 
regularity and the generalized thermoelastic plate equations. 
\end{abstract}

\maketitle

\tableofcontents
%%%%%%%%%%%%%%%%%%%%%%%%%%%%%%%%%%%%%%%%%%%%%%%%%%%%%%%%%%%%%%%%%%%%%%%%%%%%%%%%%%%%%%%
%%%%%%%%%%%%%%%%%%%%%%%%%%%%%%%%%%%%%%%%%%%%%%%%%%%%%%%%%%%%%%%%%%%%%%%%%%%%%%%%%%%%%%%
\section{Introduction}\label{sec:dn01}

The concept of maximal regularity is an important tool in the modern analysis of nonlinear (parabolic) evolution equations. For a densely defined closed operator $A:\calD(A)\subset X\to X$ in a Banach space $X$, maximal $L_q$-regularity essentially means that the initial value problem $u_t+Au(t)=f(t)$, $u(0)=0$, for each right-hand side $f\in L_q(\rz_+,X)$ admits a unique solution with $Au\in L_q(\rz_+,X)$ (in case of invertibility of $A$ this is equivalent to $u\in W^1_q(\rz_+,X)\cap L_q(\rz_+,\calD(A))$). In combination with fixed point arguments maximal $L_q$-regularity may be used to deduce existence and regularity results for solutions of nonlinear problems. 

It is known that $A$ is the generator of an analytic semi-group in $X$, provided it has maximal regularity. The reverse implication, however, is false. Thus it is natural to address the  question, which conditions on $A$ imply maximal regularity. One such condition is the existence of a so-called bounded $H_\infty$-calculus for $A$. This is a functional calculus, that allows to define $f(A)\in\scrL(X)$ for certain complex-valued holomorphic functions $f$; for a short review see Subsection \ref{sec:dn04.2}. This calculus was introduced by McIntosh in \cite{Mcin} and recieved since then a lot of attention (cf. \cite{DDHPV} and \cite{KuWe} for extensive expositions and further literature). The existence of an $H_\infty$-calculus implies existence of bounded imaginary powers. Combining this with a classical result of Dore and Venni \cite{DoVe}, maximal regularity follows. An alternative approach to maximal regularity relies on the so-called $\mathcal{R}$-boundedness of the resolvent. 

The aim of the present paper is to establish conditions for perturbed Douglis-Nirenberg systems that ensure the existence of a bounded $H_\infty$-calculus, hence of maximal regularity. We consider pseudodifferential systems on $\rz^n$ with components whose symbols belong to the standard H\"ormander class $S^*_{1,\delta}(\rz^n\times\rz^n)$, $0\le\delta<1$ (the order $*$ is different for each component). The established condition is a condition of parameter-ellipticity with respect to a sector $\Lambda\subset\cz$ containing the left half-plane, called $\Lambda$-ellipticity throughout the paper. We give two, initially seemingly different, formulations of $\Lambda$-ellipticity (see Definitions \ref{dn02.4} and \ref{dn02.5}). The first is motivated by a notion of parameter-ellipticity introduced by Denk, Menniken, and Volevich in \cite{DMV}, which is connected with the so-called Newton-polygon associated with the system. The second formulation is modeled on a condition introduced by Kozhevnikov \cite{Kozh1}, \cite{Kozh2}, for classical (i.e. polyhomogeneous) Douglis-Nirenberg systems. Although different in appearance, we proof that both notions of ellipticity are equivalent. For $\Lambda$-elliptic systems we construct in Section \ref{sec:dn03} a parametrix and show that such systems are diagonalizable modulo smoothing remainders. In Section \ref{sec:dn04} we establish the existence of a bounded $H_\infty$-calculus. 

The perturbations we admit in our analysis allow us not only to consider systems with smooth symbols but also with symbols of a mild H\"older regularity, see Section \ref{sec:dn05}. Minimal regularity assumptions on the symbols (i.e., the coefficients in case of differential systems) are of particular importance when aiming at nonlinear problems. As a further application, see Section \ref{sec:dn06}, we consider the so-called generalized thermoelastic plate equations introduced in \cite{AmBe}, \cite{MuRa}. It has been shown in \cite{DeRa} that (for the involved parameters belonging to the `parabolic region') this equation can be seen as an evolution equation with a generator of an analytic semi-group. We improve this result, showing the existence of a bounded $H_\infty$-calculus. 

%%%%%%%%%%%%%%%%%%%%%%%%%%%%%%%%%%%%%%%%%%%%%%%%%%%%%%%%%%%%%%%%%%%%%%%%%%%%%%%%%%%%%%%
%%%%%%%%%%%%%%%%%%%%%%%%%%%%%%%%%%%%%%%%%%%%%%%%%%%%%%%%%%%%%%%%%%%%%%%%%%%%%%%%%%%%%%%
\section{Douglis-Nirenberg systems: Basic definitions and properties}\label{sec:dn02}

In this section we provide the basic notation and definitions that will be used throughout 
the paper. Moreover, we recall some standard properties of pseudodifferential operators. 

\begin{definition}\label{dn02.1}
The symbol class $S^\mu_{\delta}(\rz^n\times\rz^n)$ with $\mu\in\rz$ and 
$0\le\delta<1$ consists of all smooth functions $a=a(x,\xi):\rz^n\times\rz^n\to\cz$ satisfying 
 $$\|a\|^\mu_{\delta,k}:=\sup_{\substack{x,\xi\in\rz^n\\|\alpha|+|\beta|\le k}}
   |D^\alpha_\xi D^\beta_x a(x,\xi)|
   \spk{\xi}^{-\mu+|\alpha|-\delta|\beta|}<\infty$$
for any $k\in\nz_0$. As usual, we use the notation $\spk{\xi}:=(1+|\xi|^2)^{1/2}$ 
and $D:=-i\partial$. Frequently, we shall simply write $S^\mu_{\delta}$. 
In case $\delta=0$, we suppress $\delta$ from the notation.
\end{definition}

The system of norms $\|\cdot\|^\mu_{\delta,k}$, $k\in\nz_0$, defines a Fr\'echet topology on 
$S^\mu_{\delta}$. To a given symbol $a\in S^\mu_{\delta}$ we associate a continuous operator 
$a(x,D):\calS(\rz^n)\to\calS(\rz^n)$ by 
\begin{equation*}%\label{}
 [a(x,D)u](x)=\int_{\rz^n}e^{-ix\xi}a(x,\xi)\widehat{u}(\xi)\,\dbar\xi,
\end{equation*}
where $\widehat{u}$ is the Fourier transform of $u$ and $\dbar\xi=(2\pi)^{-n}d\xi$. 
By duality, we extend this operator to $a(x,D):\calS^\prime(\rz^n)\to\calS^\prime(\rz^n)$. 
This operator restricts to Sobolev spaces in the following way: 

\begin{theorem}\label{dn02.2}
Let $a\in S^{\mu}_{\delta}(\rz^n\times\rz^n)$ and $1<p<\infty$. Then $a(x,D)$ restricts to a 
continuous map  
 $$a(x,D):H^s_p(\rz^n)\longrightarrow H^{s-\mu}_p(\rz^n)$$
for any real $s$. Moreover, we have continuity of the mappings 
\begin{equation}\label{dn02.A}
 a\mapsto a(x,D):S^{\mu}_{\delta}(\rz^n\times\rz^n)\longrightarrow
 \calL(H^s_p(\rz^n),H^{s-\mu}_p(\rz^n)).
\end{equation}
\end{theorem}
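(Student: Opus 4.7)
The plan has three stages: first, reduce to $L_p$-boundedness of zero-order operators via the composition calculus; second, establish $L_2$-boundedness for $a\in S^0_\delta$; third, upgrade to general $p\in(1,\infty)$ via Calder\'on--Zygmund theory. Continuity in the Fr\'echet topology is then automatic by tracking seminorm dependence through each step.

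For the first stage, the Bessel potential $\Lambda^\sigma:=\spk{D}^\sigma$ is an isomorphism $H^s_p(\rz^n)\to H^{s-\sigma}_p(\rz^n)$, so $a(x,D):H^s_p\to H^{s-\mu}_p$ is bounded iff $\Lambda^{s-\mu}a(x,D)\Lambda^{-s}:L_p\to L_p$ is. The standard composition calculus for symbols of type $(1,\delta)$ produces a symbol $b\in S^0_\delta$ representing this product, together with seminorm bounds of the form $\|b\|^0_{\delta,k}\le C_k\|a\|^\mu_{\delta,k+N_k}$, where $N_k$ depends only on $k,n,s,\mu,\delta$. This reduces \eqref{dn02.A} to the continuity of $a\mapsto a(x,D):S^0_\delta\to\calL(L_2(\rz^n))$ and of $a\mapsto a(x,D):S^0_\delta\to\calL(L_p(\rz^n))$.

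For the second stage, I would prove a Calder\'on--Vaillancourt-type estimate: there exist $k_0=k_0(n)$ and $C=C(n)$ with $\|a(x,D)\|_{\calL(L_2)}\le C\|a\|^0_{\delta,k_0}$ for every $a\in S^0_\delta$. The natural route is a Littlewood--Paley decomposition $a=\sum_{j\ge 0}a_j$ with $a_j$ supported in a dyadic shell $|\xi|\sim 2^j$, combined with the Cotlar--Stein almost-orthogonality lemma applied to $a_j(x,D)a_k(x,D)^*$ and $a_j(x,D)^*a_k(x,D)$; it is precisely the hypothesis $\delta<1$ that makes the relevant double sums of square roots converge. For the third stage, consider the distributional kernel
\[
 K(x,y)=\int_{\rz^n}e^{i(x-y)\xi}a(x,\xi)\,\dbar\xi.
\]
Repeated integration by parts in $\xi$, together with a dyadic decomposition in $|\xi|$, yields the standard Calder\'on--Zygmund estimates $|K(x,y)|\le C|x-y|^{-n}$ and $|\nabla_{x,y}K(x,y)|\le C|x-y|^{-n-1}$ for $x\neq y$, with constants controlled by a single finite seminorm of $a$. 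Crucially, these off-diagonal bounds use only $\xi$-derivatives of $a$, so the $\delta$-loss in $x$-derivatives is harmless. The Calder\'on--Zygmund extrapolation theorem, combined with the $L_2$-boundedness from stage two, then furnishes $L_p$-boundedness for $1<p<\infty$, again with operator norm dominated by a seminorm of $a$.

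Continuity of \eqref{dn02.A} follows at once, since chaining the three estimates produces $\|a(x,D)\|_{\calL(H^s_p,H^{s-\mu}_p)}\le C\|a\|^\mu_{\delta,K}$ for some $K=K(n,p,s,\mu,\delta)$. The main obstacle is the $L_2$-boundedness when $\delta>0$: a direct kernel or Schur-test argument no longer suffices, and the Cotlar--Stein organization must be set up so that the almost-orthogonality estimates for $a_j(x,D)a_k(x,D)^*$ absorb the $\delta$-loss in the $x$-derivatives while still yielding summable square roots. Everything downstream of that is either routine symbol calculus or a standard invocation of the Calder\'on--Zygmund extrapolation principle.
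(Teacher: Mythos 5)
The paper states Theorem~\ref{dn02.2} without proof; it is a classical result (found, e.g., in Kumano-go's book, which the paper cites for the symbol calculus, or in Taylor's \emph{Tools for PDE}), so there is no in-paper argument to compare against. Your three-stage plan --- reduce to $S^0_\delta$ via conjugation by Bessel potentials, prove the $L_2$ bound, upgrade to $L_p$ via Calder\'on--Zygmund kernel estimates together with duality --- is a correct and standard route, and tracking seminorm dependence through each step does yield the Fr\'echet continuity~\eqref{dn02.A}.

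One remark on stage two: invoking Cotlar--Stein is valid but somewhat heavier than necessary here. The full almost-orthogonality machinery is really needed for the exotic classes $S^0_{\rho,\rho}$ with $\rho<1$, where the asymptotic calculus degenerates. For $S^0_{1,\delta}$ with $\delta<1$ the calculus is genuine (the order drops by $1-\delta>0$ per term), so the classical H\"ormander square-root trick suffices: pick $M>\sup|a|^2$, write $M - a^*\# a$ as $c^*\# c$ plus a remainder in $S^{-\infty}$ with $c\in S^0_\delta$, and conclude $\|a(x,D)u\|_2^2\le M\|u\|_2^2 + \|Ru\|_2\|u\|_2$; iterating on the remainder finishes the estimate. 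This route is what most textbook treatments use for $(1,\delta)$ and gives the seminorm dependence just as cleanly. Your phrasing that ``$\delta<1$ makes the double sums of square roots converge'' is a bit off target for this class --- the role of $\delta<1$ is already to make the symbol calculus (and hence stage one) work at all, and to exclude the genuinely pathological class $S^0_{1,1}$ for which $L_2$-boundedness fails. Everything else, including the observation that the off-diagonal kernel bounds use only $\xi$-derivatives and hence are insensitive to $\delta$, is accurate.
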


The continuity of \eqref{dn02.A} entails that the norm $\|a(x,D)\|$ as a bounded operator 
between Sobolev spaces can be estimated from above by $C\,\|a\|^\mu_{\delta,k}$ with 
suitable constants $k$ and $C$ that do not depend on $a$. 

Pseudodifferential operators behave well under composition: There exists a continuous map 
 $$(a_1,a_2)\mapsto a_1\#a_2:S^{\mu_1}_{\delta}\times S^{\mu_2}_{\delta}
   \longrightarrow S^{\mu_1+\mu_2}_{\delta}$$
such that $a_1(x,D)a_2(x,D)=(a_1\#a_2)(x,D)$. For an explicit formula of the so-called 
Leibniz-product $a_1\#a_2$ see, for example \cite{Kuma}. In the sense of an asymptotic 
expansion we have 
 $$a_1\#a_2\sim\sum\limits_{\alpha\in\nz_0^n} 
   \frac{1}{\alpha!}\partial^\alpha_\xi a_1 D^\alpha_x a_2,$$ 
i.e., for any positive integer $N$, 
 $$a_1\#a_2-\sum_{|\alpha|=0}^{N-1}\frac{1}{\alpha!}\partial^\alpha_\xi a_1 D^\alpha_x a_2
   \;\in\;S^{\mu_1+\mu_2-(1-\delta)N}_{\delta}.$$

\begin{definition}\label{dn02.3}
A Douglis-Nirenberg system is a $(q\times q)$-matrix, $q\in\nz$, of pseudodifferential operators 
 $$A(x,D)=\Big(a_{ij}(x,D)\Big)_{1\le i,j\le q}$$
such that there exist real numbers $m_1,\ldots,m_q$ and $l_1,\ldots,l_q$ with the property that 
 $$a_{ij}(x,\xi)\in S^{l_i+m_j}_{\delta}(\rz^n\times\rz^n)\qquad\forall\;i,j=1,\ldots,q$$ 
and the numbers $r_i:=l_i+m_i$ satisfy $r_1\ge r_2\ge\ldots\ge r_q\ge0$. 
\end{definition}

A Douglis-Nirenberg system in the sense of the previous definition induces continuous operators 
 $$A(x,D):\mathop{\mbox{\Large$\oplus$}}_{j=1}^q H^{s+m_j}_p(\rz^n)
   \lra\mathop{\mbox{\Large$\oplus$}}_{i=1}^q H^{s-l_i}_p(\rz^n)
   \qquad \forall\;s\in\rz.$$
Due to the requested nonnegativity of the $r_i$ we have that $s+m_i\ge s-l_i$. 
Therefore, we may (and will) consider $A(x,D)$ as an unbounded operator in 
$\mathop{\mbox{\Large$\oplus$}}\limits_{i=1}^q H^{s-l_i}_p$ with domain 
$\mathop{\mbox{\Large$\oplus$}}\limits_{j=1}^q H^{s+m_j}_p(\rz^n)$.
%, cf. Section \ref{sec:dn03}. 

%%%%%%%%%%%%%%%%%%%%%%%%%%%%%%%%%%%%%%%%%%%%%%%%%%%%%%%%%%%%%%%%%%%%%%%%%%%%%%%%%%%%%%%
%%%%%%%%%%%%%%%%%%%%%%%%%%%%%%%%%%%%%%%%%%%%%%%%%%%%%%%%%%%%%%%%%%%%%%%%%%%%%%%%%%%%%%%
\section{$\Lambda$-elliptic Douglis-Nirenberg systems}\label{sec:dn03}

%%%%%%%%%%%%%%%%%%%%%%%%%%%%%%%%%%%%%%%%%%%%%%%%%%%%%%%%%%%%%%%%%%%%%%%%%%%%%%%%%%%%%%%
\subsection{Parameter-ellipticity}\label{sec:dn02.2}

From now on let $\Lambda$ denote a closed subsector of the complex plain, i.e. 
\begin{equation}\label{dn01.C}
 \Lambda=\Lambda(\theta)=\big\{re^{i\varphi}\st r\ge 0,\;
 \theta\le\varphi\le 2\pi-\theta\big\},\qquad 0<\theta<\pi.
\end{equation}
We let $A(x,D)$ be a system as in Definition {\rm\ref{dn02.3}}. Moreover, for simplicity of exposition, we shall assume from now on that 
\begin{equation}\label{dn02.B}
 r_1> r_2>\ldots>r_q\ge 0,\qquad r_i:=l_i+m_i.
\end{equation}
Let us point out that this assumption is mainly made for notational convenience; the 
main results of the present paper, i.e. parametrix construction, diagonalization, and existence 
of a bounded $H_\infty$-calculus, remain valid (in an adapted formulation) also 
in the general case when in \eqref{dn02.B} some (or all) of the inequalities are replaced by 
equalities. Let us also mention that we assume neither any ordering nor positivity or negativity 
of the numbers $l_1,\ldots,l_q$, $m_1,\ldots,m_q$. 

We shall now introduce two notions of parameter-ellipticity, where the parameter-space is 
just the above sector $\Lambda$, and then show that they are equivalent. 
These conditions are modeled on those given in \cite{DMV} and \cite{Kozh1}, \cite{Kozh2}. To this end let  
\begin{equation}\label{det}
 P(x,\xi;\lambda)=P_A(x,\xi;\lambda):=\mathrm{det}\big(A(x,\xi)-\lambda\big)
\end{equation}
denote the characteristic polynomial of $A(x,\xi)$ $($where we identify $\lambda$ with $\lambda I$ and where 
$I$ denotes the identity matrix$)$. It is straightforward to verify (see also Lemma \ref{dn03.2}) that 
 $$|P(x,\xi;\lambda)|\le C\,(\spk{\xi}^{r_1}+|\lambda|)\cdot\ldots\cdot(\spk{\xi}^{r_q}+|\lambda|)
   \qquad\forall\;x,\xi\in\rz^n\quad\forall\;\lambda\in\cz$$
with a suitable constant $C\ge0$. 

\begin{definition}\label{dn02.4}
$A(x,D)$ is said to be $\Lambda$-elliptic if, for some constants $C>0$ and $R\ge 0$,  
\begin{equation}\label{dn01.D}
|P(x,\xi;\lambda)|\ge C\,(\spk{\xi}^{r_1}+|\lambda|)\cdot\ldots\cdot(\spk{\xi}^{r_q}+|\lambda|)
   \qquad\forall\;x\in\rz^n\quad\forall\;|\xi|\ge R\quad\forall\;\lambda\in\Lambda. 
\end{equation}
\end{definition}

For the second definition let us introduce further  notation. We call  
 $$A[\kappa](x,D)=\Big(a_{ij}(x,D)\Big)_{1\le i,j\le\kappa},\qquad 1\le\kappa\le q,$$
the {\em $\kappa$-th principal minor} of $A(x,D)$ and let  
 $$E_\kappa=\mathrm{diag}(0,\ldots,0,1)\in\cz^{\kappa\times\kappa}.$$ 

\begin{definition}\label{dn02.5}
$A(x,D)$ is called $\Lambda$-elliptic $($with principal minors$)$ if 
\begin{equation}\label{dn01.D.5}
 \big|\mathrm{det}\big(A[\kappa](x,\xi)-\lambda E_\kappa\big)\big|
 \ge C\,\spk{\xi}^{r_1+\ldots+r_{\kappa-1}}(\spk{\xi}^{r_\kappa}+|\lambda|)
 \qquad \forall\;x\in\rz\quad\forall\;|\xi|\ge R\quad\forall\;\lambda\in\Lambda,
\end{equation}
with suitable constants $C>0$ and $R\ge 0$, for each $1\le\kappa\le q$.  
\end{definition}

\begin{theorem}\label{dn02.6}
The two notions of $\Lambda$-ellipticity given in Definition {\rm\ref{dn02.4}} and 
{\rm\ref{dn02.5}}, respectively,  are equivalent. 
\end{theorem}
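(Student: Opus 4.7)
The strategy is to prove a master factorization that delivers both directions. The starting point is the algebraic identity, obtained by Laplace expansion along the $\kappa$-th column,
\begin{equation*}
\det\bigl(A[\kappa](x,\xi) - \lambda E_\kappa\bigr) = d_\kappa(x,\xi) - \lambda\,d_{\kappa-1}(x,\xi),\qquad d_\kappa:=\det A[\kappa],\;d_0:=1,
\end{equation*}
combined with the following Douglis--Nirenberg order count: for any $\kappa\times\kappa$ principal minor of $A$ at an index set $T\subset\{1,\ldots,q\}$, every permutation term in its Leibniz expansion has symbol order $\sum_{k\in T}(l_k+m_{\sigma(k)})=\sum_{k\in T}r_k$, so the minor is $O(\spk{\xi}^{\sum_{i\in T}r_i})$. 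The strict ordering $r_1>\ldots>r_q$ from \eqref{dn02.B} then makes $d_\kappa$ the uniquely dominant $\kappa\times\kappa$ principal minor, with an order gap to the next contender of at least $r_\kappa-r_{\kappa+1}>0$.

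The master factorization I would establish is: \emph{if} $|d_\kappa(x,\xi)|\ge c\,\spk{\xi}^{r_1+\ldots+r_\kappa}$ for all $\kappa$ and $|\xi|\ge R$, \emph{then} the pivots $\mu_\kappa:=d_\kappa/d_{\kappa-1}$ satisfy $|\mu_\kappa|\sim\spk{\xi}^{r_\kappa}$, and symbolic Gaussian elimination on $A-\lambda I$ produces
\begin{equation*}
P(x,\xi;\lambda) \;=\; \prod_{\kappa=1}^q\bigl(\mu_\kappa(x,\xi)-\lambda\bigr) \;+\; \rho(x,\xi;\lambda),
\end{equation*}
with $|\rho(x,\xi;\lambda)|\le C\spk{\xi}^{-\varepsilon}\prod_\kappa(\spk{\xi}^{r_\kappa}+|\lambda|)$ for some $\varepsilon>0$, uniformly in $\lambda\in\cz$ and $|\xi|\ge R$. (For $q=2$ one computes $\rho=-\lambda\,a_{11}^{-1}a_{12}a_{21}$, whose size $|\lambda|\spk{\xi}^{r_2}$ is controlled by $\prod(\spk{\xi}^{r_\kappa}+|\lambda|)$ with a factor $\spk{\xi}^{-(r_1-r_2)}$ to spare; the general case goes by induction on $q$, the $k$-th elimination step producing corrections built from off-diagonal symbols divided by previous pivots that satisfy the required lower bounds.) Since trivially $|\mu_\kappa-\lambda|\le C(\spk{\xi}^{r_\kappa}+|\lambda|)$, this yields a two-sided comparison between $|P|$ and $\prod_\kappa|\mu_\kappa-\lambda|$ up to multiplicative factor $1+o(1)$ as $|\xi|\to\infty$.

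With the master factorization in hand, both implications are short. For $\eqref{dn01.D.5}\Rightarrow\eqref{dn01.D}$: setting $\lambda=0$ in \eqref{dn01.D.5} supplies the hypothesis of the factorization, and \eqref{dn01.D.5} itself is equivalent to $|\mu_\kappa-\lambda|\ge c(\spk{\xi}^{r_\kappa}+|\lambda|)$ for all $\kappa$ and $\lambda\in\Lambda$; the factorization then yields $|P|\ge c\prod_\kappa(\spk{\xi}^{r_\kappa}+|\lambda|)$, which is \eqref{dn01.D}. For $\eqref{dn01.D}\Rightarrow\eqref{dn01.D.5}$: expand $P(\lambda)=\sum_{j=0}^q(-\lambda)^j e_{q-j}(x,\xi)$ with $e_k:=\sum_{|T|=k}\det A|_{T\times T}$; by the dominance observation, $e_\kappa=d_\kappa+O(\spk{\xi}^{r_1+\ldots+r_\kappa-\varepsilon})$ for some $\varepsilon>0$. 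Evaluating \eqref{dn01.D} at $\lambda=re^{i\theta_0}\in\Lambda$ for $r$ in the intermediate band $\spk{\xi}^{r_{\kappa+1}}\ll r\ll\spk{\xi}^{r_\kappa}$ makes the term $(-\lambda)^{q-\kappa}e_\kappa\sim\lambda^{q-\kappa}d_\kappa$ uniquely dominant in $P$, forcing $|d_\kappa|\ge c\,\spk{\xi}^{r_1+\ldots+r_\kappa}$ and placing us in the setting of the master factorization. Combining \eqref{dn01.D} with the factorization and the trivial upper bounds on $|\mu_i-\lambda|$ for $i\neq\kappa$ then isolates $|\mu_\kappa-\lambda|\ge c(\spk{\xi}^{r_\kappa}+|\lambda|)$, which is \eqref{dn01.D.5}.

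The main obstacle is the rigorous control of the factorization error: one must verify that the Gaussian-elimination remainders $\rho$ are strictly subdominant to $\prod_\kappa(\spk{\xi}^{r_\kappa}+|\lambda|)$ uniformly in $\lambda\in\cz$ and $|\xi|\ge R$. This is precisely the point where the strict inequality $r_1>\ldots>r_q$ of \eqref{dn02.B} is essential; under the weaker assumption $r_1\ge\ldots\ge r_q$ the argument requires refinement on the degenerate blocks, consistent with the remark following \eqref{dn02.B}.
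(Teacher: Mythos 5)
Your proposal takes a genuinely different route from the paper and, modulo the one imprecision noted below, it works. The paper proves one direction (Definition \ref{dn02.5} $\Rightarrow$ Definition \ref{dn02.4}) by first building the full Kozhevnikov-style diagonalization of Theorem \ref{dn03.1} and then appealing to Corollaries \ref{dn03.2.5} and \ref{dn03.1.5}; the other direction is done by a contradiction argument with rescaling matrices $D_1(\xi), D_2(\xi)$, first at $\lambda=0$ (Step 1) and then for general $\lambda$ (Step 2 with a $\liminf$ case split). Your ``master factorization'' $P = \prod_\kappa(\mu_\kappa-\lambda)+\rho$ with $\mu_\kappa=d_\kappa/d_{\kappa-1}$ gives a unified and more elementary engine: both implications drop out once the remainder bound is in place, and you avoid invoking the heavy diagonalization machinery. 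Your ``intermediate band'' argument is an honest alternative to the paper's Step~1 for extracting $|d_\kappa|\gtrsim\spk{\xi}^{r_1+\dots+r_\kappa}$. What the paper's route buys is re-use: Theorem \ref{dn03.1} is needed anyway in Section~\ref{sec:dn04}, so Corollary \ref{dn03.1.5} comes for free. What your route buys is a short, self-contained Theorem \ref{dn02.6}.

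One point deserves correction: attributing the factorization to ``symbolic Gaussian elimination on $A-\lambda I$'' is not quite right. The pivots of $A-\lambda I$ are $\det\bigl((A-\lambda I)[\kappa]\bigr)/\det\bigl((A-\lambda I)[\kappa-1]\bigr)$, i.e.\ ratios of full characteristic polynomials of nested minors, and these are \emph{not} $\mu_\kappa-\lambda$ (already for $q=2$ the second elimination pivot is $(a_{22}-\lambda)-a_{21}a_{12}/(a_{11}-\lambda)$, whereas $\mu_2-\lambda=a_{22}-a_{12}a_{21}/a_{11}-\lambda$). So the ``induction on $q$, the $k$-th elimination step producing corrections'' sketch does not directly yield the stated $\rho$. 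The clean way to establish the remainder bound is via the coefficient comparison you only implicitly use: write $P(\lambda)=\sum_{j=0}^q(-\lambda)^{q-j}e_j(A)$ with $e_j(A)$ the sum of $j\times j$ principal minors and $\prod_\kappa(\mu_\kappa-\lambda)=\sum_{j=0}^q(-\lambda)^{q-j}e_j(\mu)$ with $e_j(\mu)$ the elementary symmetric polynomials in $\mu_1,\dots,\mu_q$. Your order count gives $|\det A|_{T\times T}|\le C\spk{\xi}^{\sum_{i\in T}r_i}$, and the lower bounds $|d_\kappa|\ge c\spk{\xi}^{r_1+\dots+r_\kappa}$ give $|\mu_\kappa|\sim\spk{\xi}^{r_\kappa}$, so both $e_j(A)$ and $e_j(\mu)$ equal $d_j+O\bigl(\spk{\xi}^{r_1+\dots+r_j-(r_j-r_{j+1})}\bigr)$; the difference $e_j(\mu)-e_j(A)=O\bigl(\spk{\xi}^{r_1+\dots+r_{j-1}+r_{j+1}}\bigr)$ and hence
\begin{equation*}
|\lambda|^{q-j}\,|e_j(\mu)-e_j(A)|\le C\,\spk{\xi}^{-(r_j-r_{j+1})}\,\spk{\xi}^{r_1+\dots+r_{j-1}}|\lambda|\,\spk{\xi}^{r_j}\,|\lambda|^{q-j-1}\le C\,\spk{\xi}^{-(r_j-r_{j+1})}\smprod_\kappa(\spk{\xi}^{r_\kappa}+|\lambda|),
\end{equation*}
uniformly in $\lambda\in\cz$. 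Summing over $1\le j\le q-1$ (the $j=0,q$ terms vanish) gives exactly the claimed $|\rho|\le C\spk{\xi}^{-\eps}\prod_\kappa(\spk{\xi}^{r_\kappa}+|\lambda|)$ with $\eps=\min_j(r_j-r_{j+1})>0$. With that replacement, your proof is sound and is a legitimate alternative to the one in the paper.
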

{\sc Proof}.
That $\Lambda$-elliptic with principal minors implies $\Lambda$-ellipticity in the sense of 
Definition \ref{dn02.4} we shall prove in Corollary \ref{dn03.1.5}, below. For the other implication, 
we proceed in two steps: 

{\bf Step 1:} First we will show that the condition of $\Lambda$-ellipticity with principal minors 
is satisfied for $\lambda=0$. More precisely, we will show that there exist $R\ge0$ and $C>0$ such 
that for all $\kappa=1,\dots,q$ we have
 $$|\det A[\kappa](x,\xi)|\ge C \langle \xi\rangle^{r_1+\dots+r_\kappa}\qquad 
   \forall\;x\in\rz^n\quad\forall\;|\xi|\ge R\quad\forall\;\lambda\in\Lambda.$$
Assume this is not the case. Then there exists a $\kappa\in\{1,\dots,q\}$ and a sequence
$(x_k,\xi_k)_{k\in\nz}\subset \rz^n\times \rz^n$ with $|\xi_k|\to\infty$ and 
\begin{equation}\label{rd01}
  \big| \det A[\kappa](x_k,\xi_k)\big| \; \langle
  \xi_k\rangle^{-r_1-\dots-r_\kappa} \xrightarrow{k\to\infty}0.
\end{equation}
We define $r:= \frac{r_\kappa+r_{\kappa+1}}2\in(r_{\kappa+1},r_\kappa)$ and choose the sequence
$(\lambda_k)_{k\in\nz}\subset\Lambda$ by $\lambda_k := \langle\xi_k\rangle^r \lambda_0$ with a 
fixed $\lambda_0\in\Lambda$, $|\lambda_0|=1$. We will consider the $q\times q$-matrix
 $$\tilde A[\kappa](x,\xi,\lambda) := 
   \begin{pmatrix}
    A[\kappa](x,\xi) & 0 \\ 0 & -\lambda I_{q-\kappa}
   \end{pmatrix}$$
where $I_{q-\kappa}$ stands for the $(q-\kappa)$-dimensional unit matrix. Due to \eqref{rd01} we have
\begin{equation}\label{rd02}
  \frac{|\det \tilde A[\kappa](x_k,\xi_k,\lambda_k)|}{\langle
  \xi_k\rangle^{r_1+\dots+r_\kappa} |\lambda_k|^{q-\kappa}} \xrightarrow{k\to\infty}0.
\end{equation}
For a rescaling of the matrix $\tilde A[\kappa]$, we set $\epsilon_j:= \frac{r-r_j}2 >0$ for 
$j=\kappa+1,\dots,q$ and
\begin{align*}
  D_1(\xi) & := \textrm{diag} \big( \langle \xi\rangle^{-l_1},\dots,
  \langle \xi\rangle^{-l_\kappa},
  \langle\xi\rangle^{-l_{\kappa+1}-\epsilon_{\kappa+1}},\dots,
  \langle\xi\rangle^{-l_q-\epsilon_q}\big),\\
  D_2(\xi) & := \textrm{diag} \big( \langle \xi\rangle^{-m_1},\dots,
  \langle \xi\rangle^{-m_\kappa},
  \langle\xi\rangle^{-m_{\kappa+1}-\epsilon_{\kappa+1}},\dots,
  \langle\xi\rangle^{-m_q-\epsilon_q}\big).
  \end{align*}
We will estimate the coefficients $b_{ij}(x,\xi,\lambda)$ of the matrix
 $$B(x,\xi,\lambda) := D_1(\xi) \Big( (A(x,\xi)-\lambda) - \tilde
   A[\kappa](x,\xi,\lambda)\Big) D_2(\xi).$$ 
For $i,j\le \kappa$ we get
 $$|b_{ij}(x_k,\xi_k,\lambda_k)| = \delta_{ij}
   \langle\xi_k\rangle^{-r_i} |\lambda_k| = \delta_{ij}
   \langle\xi_k\rangle^{r-r_i}\xrightarrow{k\to\infty}0.$$ 
For $i,j>\kappa$ one has
 $$|b_{ij}(x_k,\xi_k,\lambda_k)| = |a_{ij}(x_k,\xi_k)|
   \langle\xi_k\rangle^{-l_i-m_j-\epsilon_i-\epsilon_j}\xrightarrow{k\to\infty}0,$$ 
since $a_{ij}(x,\xi)\in S^{l_i+m_j}(\rz^n\times\rz^n)$ and $\epsilon_i, \epsilon_j>0$. 
In the same way, we get $|b_{ij}|\to 0$ in the cases $i\le \kappa, \,j>\kappa$ and $i>\kappa,\, j\le \kappa$, 
where now only one factor of the form $\langle\xi_k\rangle^{-\epsilon_i}$ appears.
Hence $B(x_k,\xi_k,\lambda_k)\xrightarrow{k\to\infty}0$. 
By direct computation, $D_1(\xi_k)(A(x_k,\xi_k)-\lambda_k)D_1(\xi_k)$ can be shown to be bounded, uniformly in $k$. 
Since the determinant is uniformly continuous on bounded sets, we thus can conclude that  
 $$\det D_1(\xi_k)\big( A(x_k,\xi_k)-\lambda_k\big) D_2(\xi_k) - \det D_1(\xi_k)
   \tilde A[\kappa](x_k,\xi_k,\lambda_k) D_2(\xi_k) \to 0.$$ 
From this, the definition of $|\lambda_k|$, and \eqref{rd02} we obtain
 $$\frac{\big|\det\big(
   A(x_k,\xi_k)-\lambda_k\big)\big|}{\langle\xi_k\rangle^{r_1+\dots+r_\kappa}
   |\lambda_k|^{q-\kappa}}\xrightarrow{k\to\infty}0.$$ 
By our choice of $r$ and $\lambda_k$ we have
 $$\frac{\mathop{\mbox{$\prod$}}\limits_{j=1}^q (\langle
   \xi_k\rangle^{r_j}+|\lambda_k|)}{\langle\xi_k\rangle^{r_1+\dots+r_\kappa}
   |\lambda_k|^{q-\kappa}}\xrightarrow{k\to\infty}1.$$ 
The last two statements yield 
 $$\big|\det \big( A(x_k,\xi_k)-\lambda_k\big)\big| \cdot
   \mathop{\mbox{\large$\prod$}}_{j=1}^q 
   \big( \langle \xi_k\rangle^{r_j} + |\lambda_k|\big)^{-1}\xrightarrow{k\to\infty}0$$ 
which contradicts the $\Lambda$-ellipticity of $A(x,D)$. Thus the conditions of Definition
\ref{dn02.5} are satisfied for $\lambda=0$.

{\bf Step 2:} Now we want to show that condition \eqref{dn01.D.5} holds for $\lambda\not=0$. 
If this is not the case there exists a $\kappa\in\{1,\dots,q\}$ and a sequence
$(x_k,\xi_k,\lambda_k)_{k\in\nz}\subset \rz^n\times\rz^n\times\Lambda$ with $|\xi_k|\to\infty$ and 
\begin{equation}\label{rd03}
  \frac{|\det (A[\kappa](x_k,\xi_k)-\lambda_kE_\kappa)|}{\langle
  \xi_k\rangle^{r_1+\dots+r_{\kappa-1}} (
  \langle\xi_k\rangle^{r_\kappa}+|\lambda_k|)}\xrightarrow{k\to\infty}0.
\end{equation}
We shall use the equality 
  $$\det\big(A[\kappa](x,\xi) - \lambda E_\kappa\big) = \det
    A[\kappa](x,\xi)-\lambda \det A[\kappa-1](x,\xi),$$
which is valid due to the linearity  of the determinant with respect to the $\kappa$-th column. 
\begin{itemize}
\item[(i)] First we show that 
$\liminf_{k\to\infty}\frac{\langle\xi_k\rangle^{r_\kappa}}{|\lambda_k|}>0$. 
If this is not the case we may assume, by passing to a subsequence, that
$\frac{\langle \xi_k\rangle^{r_\kappa}}{|\lambda_k|}\xrightarrow{k\to\infty}0$. 
Now we apply Step 1 of this proof to estimate
\begin{align*}
  & \frac{|\det(A[\kappa](x_k,\xi_k)-\lambda_k E_\kappa)|}{\langle
  \xi_k\rangle^{r_1+\dots+r_{\kappa-1}} (\langle
  \xi_k\rangle^{r_\kappa}+|\lambda_k|)} \\
  &\qquad \ge |\lambda_k|
  \frac{|\det A[\kappa-1](x_k,\xi_k)|}{\langle
  \xi_k\rangle^{r_1+\dots+r_{\kappa-1}} (\langle
  \xi_k\rangle^{r_\kappa}+|\lambda_k|)} - \frac{|\det
  A[\kappa](x_k,\xi_k)|}{\langle
  \xi_k\rangle^{r_1+\dots+r_{\kappa-1}} (\langle
  \xi_k\rangle^{r_\kappa}+|\lambda_k|)}\\
  & \qquad\ge C_1 \; \frac{|\lambda_k|}{\langle
  \xi_k\rangle^{r_\kappa}+|\lambda_k|} - C_2 \; \frac{\langle
  \xi_k\rangle^{r_\kappa}}{\langle
  \xi_k\rangle^{r_\kappa}+|\lambda_k|}
\end{align*}
with two positive constants $C_1$ and $C_2$. For $k\to\infty$ the right-hand side of the last 
inequality tends to $C_1>0$ which contradicts \eqref{rd03}.
\item[(ii)] In the same way we show $\liminf_{k\to\infty}\frac{|\lambda_k|}{\langle\xi_k\rangle^{r_\kappa}}>0$. 
If this does not hold, we may assume
$\frac{|\lambda_k|}{\langle\xi_k\rangle^{r_\kappa}}\xrightarrow{k\to\infty}0$. 
Thus we obtain
 $$\frac{|\det(A[\kappa](x_k,\xi_k)-\lambda_k E_\kappa)|}{\langle
  \xi_k\rangle^{r_1+\dots+r_{\kappa-1}} (\langle
  \xi_k\rangle^{r_\kappa}+|\lambda_k|)} \ge C_1\;\frac{\langle
  \xi_k\rangle^{r_\kappa}}{\langle
  \xi_k\rangle^{r_\kappa}+|\lambda_k|} - C_2\;\frac{|\lambda_k|}{\langle
  \xi_k\rangle^{r_\kappa}+|\lambda_k|}\xrightarrow{k\to\infty}C_1>0,$$
again a contradiction to \eqref{rd03}.
\item[(iii)] Due to (i) and (ii), there exist positive constants $C_3$ and $C_4$ with
 $$C_3 \langle\xi_k\rangle^{r_\kappa} \le |\lambda_k|\le C_4
   \langle\xi_k \rangle^{r_\kappa}$$ 
for sufficiently large $k$. As in Step 1, we use the scaling matrices $D_1(\xi)$ and $D_2(\xi)$, 
now setting $r:= r_\kappa$. For the coefficients of the matrix
  $$B(x,\xi,\lambda) := D_1(\xi) \left[ \big( A(x,\xi)-\lambda I_q\big) - 
    \begin{pmatrix} 
     A[\kappa](x,\xi) - \lambda E_\kappa & 0\\ 
     0 & -\lambda I_{q-\kappa}
     \end{pmatrix}\right] D_2(\xi) $$
we obtain the estimates
 $$\hspace*{5ex}|b_{ij}(x_k,\xi_k,\lambda_k)| =
  \begin{cases}
  \delta_{ij}\langle\xi_k\rangle^{-r_i}|\lambda_k|\quad&:i,j<\kappa\\
  0 \quad&:i=\kappa,j<\kappa\text{ or } i<\kappa, j=\kappa\\
  |a_{ij}(x_k,\xi_k)| \, \langle\xi_k\rangle^{-l_i-m_j-\epsilon_i}\quad&:i>\kappa, j\le \kappa\\
  |a_{ij}(x_k,\xi_k)| \, \langle\xi_k\rangle^{-l_i-m_j-\epsilon_j}\quad&:i\le\kappa,j>\kappa\\
  |a_{ij}(x_k,\xi_k)| \, \langle\xi_k\rangle^{-l_i-m_j-\epsilon_i-\epsilon_j}\quad&:i,j>\kappa
  \end{cases}.$$
In all cases $|b_{ij}(x_k,\xi_k,\lambda_k)|\xrightarrow{k\to\infty}0$. In the same way as before we obtain, 
using \eqref{rd03} and the equality
 $$\frac{\mathop{\mbox{$\prod$}}\limits_{j=1}^q \big( \langle\xi_k\rangle^{r_j} +
   |\lambda_k|\big)}{\langle\xi_k\rangle^{r_1+\dots+r_{\kappa-1}} \big(
   \langle\xi_k\rangle^{r_\kappa} +|\lambda_k|\big)
   |\lambda_k|^{q-\kappa}}\xrightarrow{k\to\infty}1,$$ 
that
 $$\big|\det \big( A(x_k,\xi_k)-\lambda_k\big)\big| \cdot\mathop{\mbox{\large$\prod$}}_{j=1}^q
   \big( \langle\xi_k\rangle^{r_j} + |\lambda_k|\big)^{-1}\xrightarrow{k\to\infty}0.$$ 
This contradicts the $\Lambda$-ellipticity of $A(x,D)$ and finishes the proof.
\hspace*{\fill}\qed 
\end{itemize}

%%%%%%%%%%%%%%%%%%%%%%%%%%%%%%%%%%%%%%%%%%%%%%%%%%%%%%%%%%%%%%%%%%%%%%%%%%%%%%%%%%%%%%%
\subsection{Construction of the parametrix}\label{sec:dn03.2}

Throughout this subsection let $A(x,D)$ be a $\Lambda$-elliptic Douglis-Nirenberg system. For simplicity we 
shall assume that \eqref{dn01.D} holds with $R=0$. As the following lemma shows, for our purposes that 
is no restriction: 

\begin{lemma}\label{dn02.7}
Let $A(x,D)$ be $\Lambda$-elliptic. Then there exists an $\alpha_0 \ge0$ such that the 
system $A_\alpha(x,D):=A(x,D)+\alpha$ satisfies 
 $$|P_{A_\alpha}(x,\xi;\lambda)|
   \ge C\,(\spk{\xi}^{r_1}+|\lambda|)\cdot\ldots\cdot(\spk{\xi}^{r_q}+|\lambda|)
   \qquad\forall\;x\in\rz^n\quad\forall\;\xi\in\rz^n\quad\forall\;\lambda\in\Lambda,$$
whenever $\alpha\ge\alpha_0$. 
\end{lemma}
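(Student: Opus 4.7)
The starting point is the algebraic identity
$$P_{A_\alpha}(x,\xi;\lambda)=\det\big((A(x,\xi)+\alpha I)-\lambda I\big)=P_A(x,\xi;\lambda-\alpha),$$
which reduces the claim to a lower bound on $P_A(x,\xi;\mu)$ with $\mu:=\lambda-\alpha$, uniformly in $\xi\in\rz^n$. Since the $\Lambda$-ellipticity of $A$ already furnishes such a bound on $\{|\xi|\ge R\}$, the plan is to transfer that bound back from $\mu$ to $\lambda$ in the outer region, and to establish the required estimate on the bounded-frequency region $\{|\xi|\le R\}$ by a direct argument relying on uniform boundedness of the coefficients there.

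The technical core is a short geometric lemma about $\Lambda$ and nonnegative real shifts: for every $\lambda\in\Lambda$ and every $\alpha\ge 0$ one has (a) $\lambda-\alpha\in\Lambda$, and (b) $|\lambda-\alpha|\ge(\sin\theta)\,|\lambda|$. Writing $\lambda=re^{i\varphi}$ with $\varphi\in[\theta,2\pi-\theta]$, statement (a) follows by checking that leftward translation only pushes $\arg\lambda$ closer to $\pi$ (a short case distinction between $\theta\le\pi/2$ and $\theta>\pi/2$ suffices), while statement (b) follows by minimising the elementary expression $|\lambda-\alpha|^2/|\lambda|^2=1-2(\alpha/|\lambda|)\cos\varphi+(\alpha/|\lambda|)^2$ over $\alpha/|\lambda|\ge 0$ and over $\varphi\in[\theta,2\pi-\theta]$. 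Granted the lemma, for $|\xi|\ge R$ the assumed $\Lambda$-ellipticity of $A$ applied at $\mu=\lambda-\alpha\in\Lambda$, combined with (b), immediately yields
$$|P_{A_\alpha}(x,\xi;\lambda)|\ge C\smprod_{j=1}^q\big(\spk{\xi}^{r_j}+|\mu|\big)\ge C(\sin\theta)^q\smprod_{j=1}^q\big(\spk{\xi}^{r_j}+|\lambda|\big),$$
uniformly in $\alpha\ge 0$ and $x\in\rz^n$.

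For $|\xi|\le R$ the symbol estimates give $|a_{ij}(x,\xi)|\le C_R$ uniformly in $x$, so the monic polynomial $P_A(x,\xi;\mu)=(-\mu)^q+\sum_{k<q}b_k(x,\xi)(-\mu)^k$ has uniformly bounded coefficients, and a standard leading-term argument yields $|P_A(x,\xi;\mu)|\ge\frac{1}{2}|\mu|^q$ as soon as $|\mu|\ge N$ for some $N=N(R)$. Splitting into the cases $|\lambda|\le\alpha/2$ (where the triangle inequality gives $|\mu|\ge\alpha/2$) and $|\lambda|\ge\alpha/2$ (where (b) gives $|\mu|\ge(\sin\theta)\alpha/2$), one sees that a sufficiently large $\alpha_0=\alpha_0(R,\theta)$ forces simultaneously $|\mu|\ge N$ and $|\mu|\ge c(1+|\lambda|)$ for every $\alpha\ge\alpha_0$. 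Since $\smprod_{j=1}^q(\spk{\xi}^{r_j}+|\lambda|)\le C_R(1+|\lambda|)^q$ on $\{|\xi|\le R\}$, the bound $|P_{A_\alpha}(x,\xi;\lambda)|\ge\frac{1}{2}|\mu|^q\ge c'\smprod_{j=1}^q(\spk{\xi}^{r_j}+|\lambda|)$ closes this region as well. I expect the geometric lemma — particularly the verification of (a) uniformly across $\theta\in(0,\pi)$, where $\Lambda$ transitions from non-convex ($\theta<\pi/2$) to convex ($\theta>\pi/2$) — to be the main obstacle; after that, the rest amounts to a careful but routine merging of the two estimates into a single constant $\alpha_0$.
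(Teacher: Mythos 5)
Your proof is correct and follows essentially the same strategy as the paper's: reduce via $P_{A_\alpha}(x,\xi;\lambda)=P_A(x,\xi;\lambda-\alpha)$, observe that $\lambda-\alpha\in\Lambda$ and that $|\lambda-\alpha|$ is bounded below by a positive multiple of $|\lambda|$ so as to recycle the ellipticity estimate on $\{|\xi|\ge R\}$, and close $\{|\xi|\le R\}$ by a direct argument exploiting the uniform boundedness of $A(x,\xi)$ there. The only differences are cosmetic: you make the geometric inequality $|\lambda-\alpha|\ge(\sin\theta)\,|\lambda|$ explicit and uniform in $\alpha\ge0$ where the paper simply calls the corresponding comparison with $\spk{\lambda}$ obvious (for fixed $\alpha$), and on the bounded-frequency region you use a leading-term estimate for the characteristic polynomial where the paper notes that for large $\alpha$ the spectrum of $A_\alpha(x,\xi)$ leaves $\Lambda$ --- both yield the same conclusion.
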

\begin{proof}
By definition, we have 
 $$P_{A_\alpha}(x,\xi;\lambda)=P_A(x,\xi;\lambda-\alpha)
   =\mathrm{det}\big(A(x,\xi)-(\lambda-\alpha)\big).$$
Obviously, there exist constants $d\le 1\le D$ such that 
 $$d\,\spk{\lambda}\le|\lambda-\alpha|\le D\,\spk{\lambda}
   \qquad\forall\;\lambda\in\Lambda.$$
As $\lambda-\alpha\in\Lambda$ for each $\lambda\in\lambda$, the $\Lambda$-ellipticity of 
$A(x,D)$ thus yields that 
 $$|P_{A_\alpha}(x,\xi;\lambda)|
   \ge C_\alpha\,(\spk{\xi}^{r_1}+\spk{\lambda})\cdot\ldots\cdot(\spk{\xi}^{r_q}+\spk{\lambda})$$
uniformly in $x\in\rz^n$, $|\xi|\ge R$ and $\lambda\in\Lambda$. Let us consider those $\xi$ 
with $|\xi|\le R$. Clearly, 
 $$\sup_{x\in\rz^n,\,|\xi|\le R}\|A(x,\xi)\|<\infty.$$
Thus, choosing $\alpha_0$ large enough, $A_\alpha(x,\xi)$ has no spectrum in $\Lambda$ and 
 $$d\,\spk{\lambda}^q\le|P_{A_\alpha}(x,\xi;\lambda)|\le D\,\spk{\lambda}^q 
   \qquad\forall\;\lambda\in\Lambda$$
uniformly in $x\in\rz^n$ and $|\xi|\le R$, for suitable constants $d\le 1\le D$. 
This yields the result. 
\end{proof}

\begin{lemma}\label{dn03.2}
Define  
 $$G^{(0)}(x,\xi;\lambda)=\Big(g_{ij}^{(0)}(x,\xi;\lambda)\Big)_{1\le i,j\le q}
   :=\big(A(x,\xi)-\lambda\big)^{-1}.$$
Then the following uniform in $(x,\xi,\lambda)\in \rz^n\times\rz^n\times\Lambda$ estimates 
hold true: 
 $$|D^\alpha_\xi D^\beta_x g_{ij}^{(0)}(x,\xi;\lambda)|\le C_{\alpha\beta}\,
   (\spk{\xi}^{r_i}+|\lambda|)^{-1}(\spk{\xi}^{r_j}+|\lambda|)^{-1}
   \spk{\xi}^{l_i+m_j-|\alpha|+\delta|\beta|}$$
in case $i\not=j$, and 
 $$|D^\alpha_\xi D^\beta_x g_{ii}^{(0)}(x,\xi;\lambda)|\le C_{\alpha\beta}\,
   (\spk{\xi}^{r_i}+|\lambda|)^{-1}\spk{\xi}^{-|\alpha|+\delta|\beta|}.$$
\end{lemma}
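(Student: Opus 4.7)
The approach is classical: combine Cramer's rule $g^{(0)}_{ij}=(-1)^{i+j}M_{ji}/P$, where $M_{ji}$ is the $(q-1)\times(q-1)$ minor of $A(x,\xi)-\lambda I$ obtained by deleting row $j$ and column $i$, with the $\Lambda$-ellipticity lower bound on $|P|$ from Definition \ref{dn02.4}, which by the standing assumption holds for all $\xi\in\rz^n$ (cf.\ Lemma \ref{dn02.7}). For the base case $\alpha=\beta=0$ one expands $M_{ji}$ as a sum over bijections $\sigma:\{1,\ldots,q\}\setminus\{j\}\to\{1,\ldots,q\}\setminus\{i\}$ of signed products $\prod_{k\neq j}(a_{k,\sigma(k)}-\lambda\delta_{k,\sigma(k)})$. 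For each $\sigma$, split according to the fixed set $F=\{k:\sigma(k)=k\}$ (forced to lie in $\{1,\ldots,q\}\setminus\{i,j\}$ when $i\neq j$) and its complement $NF$: a fixed factor is bounded by $C(\langle\xi\rangle^{r_k}+|\lambda|)$, a non-fixed factor by $C\langle\xi\rangle^{l_k+m_{\sigma(k)}}$. Since $\sigma$ carries $NF$ bijectively onto $\{1,\ldots,q\}\setminus\{i\}\setminus F$, one computes $\sum_{k\in NF}(l_k+m_{\sigma(k)})=\sum_{k=1}^q r_k-l_j-m_i-\sum_{k\in F}r_k$, and then $\langle\xi\rangle^{r_k}\leq\langle\xi\rangle^{r_k}+|\lambda|$ gives
$$|M_{ji}|\leq C\,\langle\xi\rangle^{l_i+m_j}\prod_{\substack{k=1\\ k\neq i,j}}^{q}(\langle\xi\rangle^{r_k}+|\lambda|)\qquad(i\neq j),$$
with the analogous estimate $|M_{ii}|\leq C\prod_{k\neq i}(\langle\xi\rangle^{r_k}+|\lambda|)$ in the diagonal case (here the non-fixed set is allowed to be all of $\{1,\ldots,q\}\setminus\{i\}$ and no excess $\langle\xi\rangle^{l_i+m_j}$ is produced). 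Dividing by $|P|\geq C\prod_k(\langle\xi\rangle^{r_k}+|\lambda|)$ settles the base case.

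For general $(\alpha,\beta)$ I would induct on $|\alpha|+|\beta|$ using the identities
$$\partial_\xi G^{(0)}=-G^{(0)}(\partial_\xi A)G^{(0)},\qquad \partial_x G^{(0)}=-G^{(0)}(\partial_x A)G^{(0)},$$
obtained from differentiating $(A-\lambda)G^{(0)}=I$. Iterating and applying Leibniz, every $D^\alpha_\xi D^\beta_x g^{(0)}_{ij}$ becomes a finite sum of products of entries of $G^{(0)}$ of strictly lower derivative order and derivatives of the entries $a_{kl}$. Inserting the inductive bounds for the $g^{(0)}$-factors and the symbol estimates $|D^{\alpha'}_\xi D^{\beta'}_x a_{kl}|\leq C\langle\xi\rangle^{l_k+m_l-|\alpha'|+\delta|\beta'|}$, the exponent bookkeeping reduces to the trivial inequality
$$\frac{\langle\xi\rangle^{r_k+r_l}}{(\langle\xi\rangle^{r_k}+|\lambda|)(\langle\xi\rangle^{r_l}+|\lambda|)}\leq 1,$$
which absorbs the intermediate $r_k,r_l$-weights after summation over $k,l$ and yields the correct factor $\langle\xi\rangle^{-|\alpha|+\delta|\beta|}$.

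The main nuisance is the base-case bookkeeping: verifying that the deletion of row $j$ and column $i$ contributes \emph{precisely} the excess $\langle\xi\rangle^{l_i+m_j}$ and that, after division by $|P|$, exactly two ``good'' denominators $(\langle\xi\rangle^{r_i}+|\lambda|)^{-1}$ and $(\langle\xi\rangle^{r_j}+|\lambda|)^{-1}$ survive (with the diagonal case $i=j$ treated as a simpler variant). Once this is in place, the inductive step for derivatives is mechanical and the lemma follows.
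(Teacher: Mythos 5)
Your proof follows essentially the same route as the paper's: Cramer's rule together with a cofactor expansion and the $\Lambda$-ellipticity lower bound on $|P|$ for the undifferentiated case, and iteration of $\partial G^{(0)} = -G^{(0)}(\partial A)G^{(0)}$ with Leibniz for the derivatives. Your base-case bookkeeping (splitting permutations by fixed set, tracking the exponents, and noting $r_i+r_j-l_j-m_i=l_i+m_j$) and the observation $\langle\xi\rangle^{r_k+r_l}\leq(\langle\xi\rangle^{r_k}+|\lambda|)(\langle\xi\rangle^{r_l}+|\lambda|)$ to absorb intermediate indices are exactly the right details, and in fact spell out more carefully what the paper compresses into ``the general case follows similarly using chain and product rule.''
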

\begin{proof}
According to Cramer's rule we have 
 $$g_{ij}^{(0)}(x,\xi;\lambda)=\frac{1}{P(x,\xi,\lambda)}
   \mathrm{det}(A(x,\xi)-\lambda)^{(i,j)},$$ 
where $B^{(i,j)}$ denotes the matrix obtained by deleting the $j$-th row and $i$-th column 
of the matrix $B$. Let us consider the case $i\not=j$. Set $Z^{(l)}=\{1,\ldots,q\}\setminus\{l\}$. 
Then, suppressing $(x,\xi)$ from the notation, $\mathrm{det}(A(x,\xi)-\lambda)^{(i,j)}$ is 
a linear combination of terms 
 $$(a_{i_1,i_1}-\lambda)\cdots(a_{i_k,i_k}-\lambda)\cdot 
   a_{i_{k+1},\pi i_{k+1}}\cdots a_{i_{q-1},\pi i_{q-1}},$$
where $Z^{(j)}=\{i_1,\ldots,i_{q-1}\}$, $1\le k\le q-2$, and $\pi:Z^{(j)}\to Z^{(i)}$ is 
a bijection. Each of these terms can be estimated from above by 
$\spk{\xi}^{l_i}\spk{\xi}^{m_j}\mathop{\mbox{$\prod$}}\limits_{\substack{l=1\\ l\not=i,j}}
(\spk{\xi}^{r_l}+|\lambda|)$. 
Together with the ellipticity assumption \eqref{dn01.D} this shows the desired estimate 
in case $|\alpha|=|\beta|=0$. The general case follows similarly using chain and product rule. 
The case $i=j$ is analogous. 
\end{proof}

Note also that the estimates of $G^{(0)}$ from the previous lemma for $\alpha=\beta=0$ are 
easily seen to imply the estimate \eqref{dn01.D}. Thus this would yield another equivalent 
definition of $\Lambda$-ellipticity.

As a direct consequence of these estimates, we get the natural fact that $\Lambda$-elliticity is 
preserved under perturbations by lower order terms:

\begin{corollary}\label{dn03.2.5}
Let $A(x,\xi)$ and $\wt{A}(x,\xi)$ be two Douglis-Nirenberg systems  
such that $A(x,\xi)$ is $\Lambda$-elliptic and for each $1\le i,j\le q$ the $(i,j)$-th component of   
$R(x,\xi):={A}(x,\xi)-\wt{A}(x,\xi)$ has order $l_i+m_j-\eps$ for some $\eps>0$. Then also 
$\wt{A}(x,\xi)$ is $\Lambda$-elliptic. 
\end{corollary}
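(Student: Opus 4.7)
The plan is to reduce the $\Lambda$-ellipticity of $\wt A$ to that of $A$ by writing
\[
 \wt A(x,\xi)-\lambda=\big(A(x,\xi)-\lambda\big)-R(x,\xi)
 =\big(I-R(x,\xi)G^{(0)}(x,\xi;\lambda)\big)\big(A(x,\xi)-\lambda\big),
\]
where $G^{(0)}=(A-\lambda)^{-1}$ exists for $|\xi|\ge R$, $\lambda\in\Lambda$ by $\Lambda$-ellipticity of $A$. Taking determinants gives
\[
 \big|\det\big(\wt A(x,\xi)-\lambda\big)\big|
 =\big|\det\big(A(x,\xi)-\lambda\big)\big|\cdot\big|\det\big(I-R(x,\xi)G^{(0)}(x,\xi;\lambda)\big)\big|,
\]
so in view of \eqref{dn01.D} for $A$ it suffices to show that the second factor is bounded from below by a positive constant, uniformly in $x\in\rz^n$, $\lambda\in\Lambda$, for all $|\xi|$ sufficiently large.

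To see this, I would estimate the entries of $RG^{(0)}$ using Lemma \ref{dn03.2}. For each $k$ we have $|r_{ik}(x,\xi)|\le C\spk{\xi}^{l_i+m_k-\eps}$, and by Lemma \ref{dn03.2} together with $\spk{\xi}^{r_k}(\spk{\xi}^{r_k}+|\lambda|)^{-1}\le 1$, the term $r_{ik}g_{kj}^{(0)}$ is bounded by
\[
 C\,\big(\spk{\xi}^{r_j}+|\lambda|\big)^{-1}\spk{\xi}^{l_i+m_j-\eps},
\]
and summing over $k$ produces the same bound for $(RG^{(0)})_{ij}$. To absorb the off-diagonal weights $l_i+m_j$ into a diagonal form, I would conjugate with $D(\xi):=\mathrm{diag}(\spk{\xi}^{l_1},\ldots,\spk{\xi}^{l_q})$; the $(i,j)$-entry of $D^{-1}RG^{(0)}D$ is bounded by
\[
 C\,\big(\spk{\xi}^{r_j}+|\lambda|\big)^{-1}\spk{\xi}^{-l_i+l_i+m_j-\eps+l_j}
 =C\,\spk{\xi}^{r_j-\eps}\big(\spk{\xi}^{r_j}+|\lambda|\big)^{-1}\le C\,\spk{\xi}^{-\eps}.
\]
Hence the operator norm of $D^{-1}RG^{(0)}D$ tends to zero as $|\xi|\to\infty$, uniformly in $x$ and $\lambda\in\Lambda$.

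Choosing $R'\ge R$ large enough that $\|D(\xi)^{-1}R(x,\xi)G^{(0)}(x,\xi;\lambda)D(\xi)\|\le 1/2$ for all $x\in\rz^n$, $|\xi|\ge R'$ and $\lambda\in\Lambda$, the eigenvalues of $I-D^{-1}RG^{(0)}D$ lie in $\{|z-1|\le 1/2\}$, so
\[
 \big|\det(I-RG^{(0)})\big|=\big|\det(I-D^{-1}RG^{(0)}D)\big|\ge 2^{-q}.
\]
Combining this with the $\Lambda$-ellipticity estimate for $A$ yields \eqref{dn01.D} for $\wt A$ on $|\xi|\ge R'$, as required. The only real obstacle is choosing the correct similarity transform: the raw entries of $RG^{(0)}$ need not be small (e.g.\ if $l_i>l_j$), and picking the weighted diagonal $D(\xi)$ built from the row orders $l_i$ is what makes the perturbation uniformly small in the rescaled coordinates.
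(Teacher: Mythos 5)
Your proof is correct and follows essentially the same strategy as the paper's: factor $\det(\wt A-\lambda)$ as $\det(A-\lambda)$ times a perturbative determinant, use the resolvent estimates of Lemma \ref{dn03.2} on $G^{(0)}$, and conjugate by a diagonal weight built from the Douglis--Nirenberg orders to make the correction uniformly small for large $|\xi|$. The only cosmetic differences are that you factor the perturbation on the left as $I-RG^{(0)}$ and conjugate by $\mathrm{diag}(\spk{\xi}^{l_i})$, while the paper factors on the right as $I+G^{(0)}R$ and conjugates by $\mathrm{diag}(\spk{\xi}^{m_i})$; and you bound the determinant via spectral radius whereas the paper invokes continuity of the determinant near the identity.
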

\begin{proof}
For large enough $|\xi|$ we have
 $$\mathrm{det}\big(\wt{A}(x,\xi)-\lambda\big)=\mathrm{det}({A}(x,\xi)-\lambda) 
   \mathrm{det}\big(1+({A}(x,\xi)-\lambda)^{-1}R(x,\xi)\big).$$
Define $M(\xi)=\mathrm{diag}\big(\spk{\xi}^{m_1},\ldots,\spk{\xi}^{m_q}\big)$ and 
$L(\xi)=\mathrm{diag}\big(\spk{\xi}^{l_1},\ldots,\spk{\xi}^{l_q}\big)$. Conjugation with 
$M$ yields 
\begin{equation}\label{eq:ell}
 \mathrm{det}\big(1+({A}(x,\xi)-\lambda)^{-1}R(x,\xi)\big)=
 \mathrm{det}\big(1+M(\xi)G^{(0)}(x,\xi;\lambda)L(\xi)L(\xi)^{-1}R(x,\xi)M(\xi)^{-1}\big).
\end{equation}
The $(i,j)$-th component of $L^{-1}RM^{-1}$ is just 
 $$r_{ij}(x,\xi)\spk{\xi}^{-m_j}\spk{\xi}^{-l_i}\in S^{-\eps}_\delta.$$
Due to Proposition \ref{dn03.2}, the $(i,j)$-th component of $MG^{(0)}L$ can be estimated from 
above by 
\begin{align*}
 |g_{ij}^{(0)}(x,\xi;\lambda)\spk{\xi}^{m_i}\spk{\xi}^{l_j}|\le 
 C(\spk{\xi}^{r_i}+|\lambda|)^{-1}(\spk{\xi}^{r_j}+|\lambda|)^{-1}
 \spk{\xi}^{r_i+r_j}\le C
\end{align*}
for $i\not=j$, and analogously for $i=j$. Therefore the matrix on the right-hand side of \eqref{eq:ell} tends to 
the identity matrix for $|\xi|\to\infty$, uniformly in $(x,\lambda)$. Hence the absolute value 
of the determinant \eqref{eq:ell} can be estimated from below by $1/2$ for sufficiently large 
$|\xi|$ and all $(x,\lambda)\in\rz^n\times\Lambda$. Thus with $A$ also $\wt{A}$ satisfies 
the ellipticity assumption given in Definition \ref{dn02.4}.  
\end{proof}

Proceeding with $G^{(0)}$ from Lemma \ref{dn03.2}, we define recursively for $\nu\in\nz$
\begin{equation}\label{dn01.E}
 G^{(\nu)}(x,\xi;\lambda)=\smsum_{\substack{m+|\alpha|=\nu\\ m<\nu}}\frac{1}{\alpha!}
 (\partial^\alpha_\xi G^{(m)})(x,\xi;\lambda)\,(D^\alpha_x A)(x,\xi)\,G^{(0)}(x,\xi;\lambda).
\end{equation}
By induction, each $\partial_\xi^\alpha\partial_x^\beta G^{(\nu)}$, $\nu\ge1$, is a finite 
linear combination of terms 
\begin{equation}\label{dn01.F}
 G^{(0)}(\partial_\xi^{\alpha_1}\partial_x^{\beta_1}A)\cdot\ldots\cdot
 G^{(0)}(\partial_\xi^{\alpha_k}\partial_x^{\beta_k}A)G^{(0)}
\end{equation}
with $|\alpha_1|+\ldots+|\alpha_k|=|\alpha|+\nu$, $|\beta_1|+\ldots+|\beta_k|=|\beta|+\nu$, 
and $k\ge2$. From this we deduce the following: 

\begin{proposition}\label{dn01.7}
Let $A(x,D)$ be $\Lambda$-elliptic and 
$G^{(\nu)}(x,\xi;\lambda)=\Big(g_{ij}^{(\nu)}(x,\xi;\lambda)\Big)_{1\le i,j\le q}$ be 
defined as in \eqref{dn01.E}. In case $\nu\ge1$ we have 
 $$|\partial_\xi^\alpha\partial_x^\beta g^{(\nu)}_{ij}(x,\xi;\lambda)|\le C_{\alpha\beta}\,
   (\spk{\xi}^{r_i}+|\lambda|)^{-1}(\spk{\xi}^{r_j}+|\lambda|)^{-1}
   \spk{\xi}^{l_i+m_j-(1-\delta)\nu-|\alpha|+\delta|\beta|}$$
for all $1\le i,j\le q$, uniformly in $(x,\xi,\lambda)\in \rz^n\times\rz^n\times\Lambda$ 
$($note that the estimates 
are also valid for the elements on the diagonal, i.e., $i=j)$. 
\end{proposition}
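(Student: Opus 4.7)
The plan is induction on $\nu$, with Lemma~\ref{dn03.2} supplying the base estimates and the recursion \eqref{dn01.E} driving the induction. The key elementary inequality is
$(\spk{\xi}^{r_p}+|\lambda|)^{-1}\spk{\xi}^{r_p}\le 1$,
which will repeatedly absorb every factor attached to an intermediate summation index.

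For the base case $\nu=1$, the recursion specialises to
$G^{(1)}=\sum_{|\alpha'|=1}\frac{1}{\alpha'!}(\partial^{\alpha'}_\xi G^{(0)})(D^{\alpha'}_x A)G^{(0)}$. Applying $\partial^\alpha_\xi\partial^\beta_x$ and expanding by Leibniz, the $(i,j)$-entry becomes a finite sum, over intermediate indices $p,q$, of triple products of derivatives of $g^{(0)}_{ip}$, $a_{pq}$, and $g^{(0)}_{qj}$, with multi-index totals equal to $|\alpha|+1$ and $|\beta|+1$. Each factor is estimated by Lemma~\ref{dn03.2} (case-split according to whether $i=p$ and whether $q=j$) and by the symbol seminorm of $a_{pq}\in S^{l_p+m_q}_\delta$. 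Using $l_p+m_p=r_p$, the powers of $\spk{\xi}$ collect into $\spk{\xi}^{l_i+m_j-(1-\delta)-|\alpha|+\delta|\beta|}$, multiplied by $(\spk{\xi}^{r_i}+|\lambda|)^{-1}(\spk{\xi}^{r_j}+|\lambda|)^{-1}$ from the two endpoint $G^{(0)}$ factors and by harmless factors $(\spk{\xi}^{r_p}+|\lambda|)^{-1}\spk{\xi}^{r_p}\le 1$ attached to the intermediate indices.

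For the inductive step $\nu\ge 2$ one splits the sum in \eqref{dn01.E} according to $m=0$ or $1\le m<\nu$. The $m=0$ contribution is handled exactly as in the base case. For $m\ge 1$, the inductive hypothesis provides a \emph{uniform} bound on $\partial^{\alpha+\gamma_1}_\xi\partial^{\beta_1}_x g^{(m)}_{ip}$ of the claimed form (no diagonal/off-diagonal distinction in $(i,p)$); combined with the symbol estimate for $D^\alpha_x a_{pq}$ and Lemma~\ref{dn03.2} for $g^{(0)}_{qj}$, the same cancellation occurs and the exponent $(1-\delta)m+(1-\delta)|\alpha|=(1-\delta)\nu$ is produced automatically.

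The principal technical point is the diagonal case of an endpoint $G^{(0)}$: when $i=p$ or $q=j$, Lemma~\ref{dn03.2} gives only the weaker bound $(\spk{\xi}^{r_p}+|\lambda|)^{-1}$, rather than the off-diagonal expression $(\spk{\xi}^{r_p}+|\lambda|)^{-2}\spk{\xi}^{r_p}$. The discrepancy is precisely a factor $(\spk{\xi}^{r_p}+|\lambda|)^{-1}\spk{\xi}^{r_p}\le 1$, so the target estimate survives; the only cost is splitting into the four configurations of diagonal versus off-diagonal endpoint $G^{(0)}$'s. One could alternatively read off the result directly from the closed expansion \eqref{dn01.F}, bypassing the induction, but at the price of a longer case analysis along the chain of $k\ge 2$ intermediate indices.
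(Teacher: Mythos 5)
Your argument is correct, and it follows a genuinely different organization from the paper's, although the analytic engine is the same. You induct directly on $\nu$ via the recursion \eqref{dn01.E}, so at each step you only estimate a product of three factors, $\partial^{\alpha'}_\xi G^{(m)}$, $D^{\alpha'}_xA$, and $G^{(0)}$; the inductive hypothesis on $G^{(m)}$ is a \emph{uniform} two-factor bound (valid also on the diagonal), which removes the diagonal/off-diagonal case split for the left factor once $m\ge1$ and confines that split to the endpoint $G^{(0)}$'s. The paper instead expands $\partial^\alpha_\xi\partial^\beta_x G^{(\nu)}$ via the closed form \eqref{dn01.F} into chains $G^{(0)}B^{(1)}G^{(0)}\cdots B^{(k)}G^{(0)}$ of arbitrary length $k\ge 2$ and proves two auxiliary chain estimates, \eqref{dn01.G} for the right-anchored interior $(B^{(3)}G^{(0)})\cdots(B^{(N)}G^{(0)})$ and \eqref{dn01.H} for the doubly-anchored head $G^{(0)}B^{(1)}G^{(0)}B^{(2)}G^{(0)}$, then concatenates. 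Both routes rest on Lemma \ref{dn03.2} together with the absorption $(\spk{\xi}^{r_p}+|\lambda|)^{-1}\spk{\xi}^{r_p}\le 1$ at each contracted matrix index. Your route buys a shorter argument in which the Leibniz bookkeeping is local to triples and no chain estimate is needed; the paper's route buys an explicit one-shot bound on arbitrary chains, which makes the structure (one decay factor per anchored end, harmless interior contributions) more visible at the cost of a sub-induction on chain length. One small phrasing caution: for a diagonal endpoint $i=p$, no ``discrepancy factor'' is actually lost or absorbed --- the diagonal bound $(\spk{\xi}^{r_i}+|\lambda|)^{-1}$ of Lemma \ref{dn03.2} already equals the required endpoint decay, and the inequality $(\spk{\xi}^{r_p}+|\lambda|)^{-1}\spk{\xi}^{r_p}\le 1$ is invoked only at genuinely intermediate indices $p\ne i$, $q\ne j$. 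This does not affect your conclusion, only its description.
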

\begin{proof}
For $n\in\nz$, let $B^{(n)}(x,\xi)=\Big(b^{(n)}_{ij}(x,\xi)\Big)_{1\le i,j\le q}$ be systems with 
$b^{(n)}_{ij}\in S^{l_i+m_j}$. The proof relies on two kinds of estimates. 

First, let $H=(B^{(3)}G^{(0)})\cdot\ldots\cdot(B^{(N)}G^{(0)})$ for an arbitrary $N\ge 3$. 
Then, by induction on $N$, it is easy to see that 
\begin{equation}\label{dn01.G}
 |h_{ij}(x,\xi;\lambda)|\le C\,(\spk{\xi}^{r_j}+|\lambda|)^{-1}\spk{\xi}^{l_i+m_j}.
\end{equation}

Second, consider $\wt H=G^{(0)}B^{(1)}G^{(0)}B^{(2)}G^{(0)}$. We shall use the explicit formula  
 $$\wt h_{ij}=\smsum_{\alpha,\beta,\gamma,\delta=1}^q 
   g^{(0)}_{i\alpha}b^{(1)}_{\alpha\beta}g^{(0)}_{\beta\gamma}b^{(2)}_{\gamma\delta}
   g^{(0)}_{\delta j}.$$
If in a summand $\beta=\gamma$, we can estimate it by 
 $$C\,\Big|g_{i\alpha}^{(0)}(x,\xi;\lambda)\spk{\xi}^{l_\alpha+m_\beta}
   (\spk{\xi}^{r_\beta}+|\lambda|)^{-1}\spk{\xi}^{l_\beta+m_\delta}
   g_{\delta j}^{(0)}(x,\xi;\lambda)\Big|$$
in view of Lemma \ref{dn03.2}. Now 
\begin{align*}
 |g_{i\alpha}^{(0)}(x,\xi;\lambda)\spk{\xi}^{l_\alpha}|
 &\le C\, 
 \begin{cases}
  (\spk{\xi}^{r_i}+|\lambda|)^{-1}\spk{\xi}^{l_i}&\quad: i=\alpha\\
  (\spk{\xi}^{r_i}+|\lambda|)^{-1}\spk{\xi}^{l_i}
  (\spk{\xi}^{r_\alpha}+|\lambda|)^{-1}\spk{\xi}^{l_\alpha+m_\alpha}
  &\quad: i=\alpha
 \end{cases}\\
 &\le C\,(\spk{\xi}^{r_i}+|\lambda|)^{-1}\spk{\xi}^{l_i}
\end{align*}
and, analogously, 
 $$|g_{\delta j}^{(0)}(x,\xi;\lambda)\spk{\xi}^{m_\delta}|\le 
   C\,(\spk{\xi}^{r_j}+|\lambda|)^{-1}\spk{\xi}^{m_j}.$$
Thus we estimate the summand by 
\begin{align*}
 C\,(\spk{\xi}^{r_i}&+|\lambda|)^{-1}(\spk{\xi}^{r_j}+|\lambda|)^{-1}
   (\spk{\xi}^{r_\beta}+|\lambda|)^{-1}\spk{\xi}^{r_\beta+l_i+m_j}\\
 &\le C\,(\spk{\xi}^{r_i}+|\lambda|)^{-1}(\spk{\xi}^{r_j}+|\lambda|)^{-1}
   \spk{\xi}^{l_i+m_j}.
\end{align*}
Arguing analogously in the case $\beta\not=\gamma$ we arrive at the estimate 
\begin{equation}\label{dn01.H}
   |\wt h_{ij}(x,\xi;\lambda)|\le C\,
   (\spk{\xi}^{r_i}+|\lambda|)^{-1}(\spk{\xi}^{r_j}+|\lambda|)^{-1}\spk{\xi}^{l_i+m_j}.
\end{equation}
Combining both estimates \eqref{dn01.G} and \eqref{dn01.H} yields 
\begin{equation}\label{dn01.I}
 |(\wt H H)_{ij}(x,\xi;\lambda)|\le C\,
 (\spk{\xi}^{r_i}+|\lambda|)^{-1}(\spk{\xi}^{r_j}+|\lambda|)^{-1}\spk{\xi}^{l_i+m_j}.
\end{equation}
To finally prove the statement of the proposition we set  
 $$B^{(n)}(x,\xi):=
   \spk{\xi}^{|\alpha_n|-\delta|\beta_n|}
   \partial_\xi^{\alpha_n}\partial_x^{\beta_n}a(x,\xi).$$
Then, according to \eqref{dn01.F}, we can represent  
$\partial^\alpha_\xi\partial^\beta_x G^{(\nu)}$ as a linear combination of terms 
 $$G^{(0)}B^{(1)}\cdot\ldots\cdot G^{(0)}B^{(k)}G^{(0)}
   \spk{\xi}^{-(1-\delta)\nu-|\alpha|+\delta|\beta|}$$
with $k\ge 2$. It remains to use the above estimate \eqref{dn01.I}. 
\end{proof}   

Using these estimates we are now in the position to construct a parametrix for 
$A(x,D)-\lambda$. For standard systems this construction can be found in \cite{Kuma}. 
However, we deal with Douglis-Nirenberg systems and also make precise the remainder estimate. 

\begin{theorem}\label{dn01.8}
There exists a $G(x,\xi;\lambda)=\Big(g_{ij}(x,\xi;\lambda)\Big)_{1\le i,j\le q}$ such that 
\begin{equation}\label{dn01.J}
 |\partial^\alpha_\xi \partial^\beta_x g_{ii}(x,\xi;\lambda)|\le 
 C_{\alpha\beta}(\spk{\xi}^{r_i}+|\lambda|)^{-1}
 \spk{\xi}^{-|\alpha|+\delta|\beta|}
\end{equation}
and, for $i\not=j$,
\begin{equation}\label{dn01.K}
 |\partial^\alpha_\xi \partial^\beta_x g_{ij}(x,\xi;\lambda)|\le 
 C_{\alpha\beta}(\spk{\xi}^{r_i}+|\lambda|)^{-1}(\spk{\xi}^{r_j}+|\lambda|)^{-1}
 \spk{\xi}^{l_i+m_j-|\alpha|+\delta|\beta|}
\end{equation} 
Moreover, for all $1\le i,j\le q$, 
\begin{align}\label{dn01.L}
\begin{split}
 |\partial^\alpha_\xi \partial^\beta_x&\{g_{ij}(x,\xi;\lambda)-g^{(0)}_{ij}(x,\xi;\lambda))^{-1}\}|\\
 & \le C_{\alpha\beta}
   (\spk{\xi}^{r_i}+|\lambda|)^{-1}(\spk{\xi}^{r_j}+|\lambda|)^{-1}
   \spk{\xi}^{l_i+m_j-(1-\delta)-|\alpha|+\delta|\beta|}
\end{split}
\end{align}
All these estimates hold uniformly in 
$(x,\xi,\lambda)\in\rz^n\times\rz^n\times\Lambda$ and for all $\alpha,\beta\in\nz^n_0$. 
Passing to the operator-level, we have 
\begin{equation}\label{dn01.M}
 \begin{split}
  G(x,D;\lambda)(A(x,D)-\lambda)&=1+R^{(0)}(x,D;\lambda),\\
  (A(x,D)-\lambda)G(x,D;\lambda)&=1+R^{(1)}(x,D;\lambda)
 \end{split}
\end{equation}
with remainders 
$R^{(k)}(x,\xi;\lambda)=\Big(r^{(k)}_{ij}(x,\xi;\lambda)\Big)_{1\le i,j\le q}$ satisfying  
\begin{equation}\label{dn01.N}
 |\partial^\alpha_\xi \partial^\beta_x r^{(k)}_{ij}(x,\xi;\lambda)| 
 \le C_{\alpha\beta N}\,\spk{\lambda}^{-1}\spk{\xi}^{-N}<\infty. 
\end{equation}
for arbitrary $N\in\nz$ and all $\alpha,\beta\in\nz^n_0$.
\end{theorem}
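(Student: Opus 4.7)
The symbol $G$ will be defined by a parameter-dependent Borel summation of the formal series $\sum_{\nu\ge 0}G^{(\nu)}$. The plan is fourfold: first, exploit the decreasing $\xi$-orders in Proposition \ref{dn01.7} to make the sum converge together with all derivatives; second, verify the entry-wise bounds \eqref{dn01.J}--\eqref{dn01.L} via tail estimates; third, use the recursion \eqref{dn01.E} to collapse the Leibniz product telescopically and produce residual remainders; fourth, track an extra factor $\spk{\lambda}^{-1}$ through those remainders to obtain \eqref{dn01.N}.

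Concretely, fix $\chi\in C^\infty(\rz^n)$ with $\chi\equiv 0$ near the origin and $\chi\equiv 1$ outside a neighbourhood of it, and choose $t_\nu\nearrow\infty$ so rapidly that
\[
 G(x,\xi;\lambda):=G^{(0)}(x,\xi;\lambda)+\sum_{\nu\ge 1}\chi(\xi/t_\nu)\,G^{(\nu)}(x,\xi;\lambda)
\]
converges together with all $\partial_\xi^\alpha\partial_x^\beta$-derivatives, uniformly in $(x,\lambda)\in\rz^n\times\Lambda$. Because Proposition \ref{dn01.7} improves the Lemma \ref{dn03.2} bound by $\spk{\xi}^{-(1-\delta)\nu}$, the standard Borel argument—selecting $t_\nu$ so large that the $\nu$-th summand is bounded by $2^{-\nu}$ times the Proposition \ref{dn01.7} estimate in the first $\nu$ derivatives—shows that, for every $N\ge 1$, $G-\sum_{\nu<N}G^{(\nu)}$ satisfies the bound of Proposition \ref{dn01.7} with $\nu$ replaced by $N$. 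Specialising $N=1$ and combining with Lemma \ref{dn03.2} gives \eqref{dn01.J} on the diagonal and \eqref{dn01.K} off it; the estimate \eqref{dn01.L} then follows as the tail bound for $g_{ij}-g_{ij}^{(0)}$, crucially using the fact that Proposition \ref{dn01.7} delivers the off-diagonal-type estimate \emph{including} the diagonal for all $\nu\ge 1$.

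For the operator identities \eqref{dn01.M}, I would expand $G\#(A-\lambda)$ by the asymptotic Leibniz formula; since $D_x^\alpha\lambda=0$ for $|\alpha|\ge 1$,
\[
 G\#(A-\lambda)\sim G(A-\lambda)+\sum_{|\alpha|\ge 1}\frac{1}{\alpha!}(\partial_\xi^\alpha G)(D_x^\alpha A).
\]
Substituting $G\sim\sum_\nu G^{(\nu)}$ and regrouping by the total index $n=\nu+|\alpha|$, the leading piece ($n=0$) equals $G^{(0)}(A-\lambda)=I$; for every $n\ge 1$ the sum of all level-$n$ contributions cancels identically by the very definition \eqref{dn01.E} of $G^{(n)}$ (up to the standard sign convention for left parametrices). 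Hence $R^{(0)}:=G\#(A-\lambda)-I$ has arbitrarily low order in $\xi$. The right-sided identity is obtained by performing the analogous construction from the right to produce a right parametrix $G_r$ and observing that $G-G_r$ is residual, so $G$ serves as a two-sided parametrix.

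The main obstacle is upgrading the mere ``residual in $\xi$'' statement to \eqref{dn01.N} with its explicit factor $\spk{\lambda}^{-1}$. After truncation at level $N$, each surviving summand in $R^{(0)}$ and $R^{(1)}$ still carries at least one resolvent factor $(\spk{\xi}^{r_j}+|\lambda|)^{-1}$ inherited from $G^{(0)}$. I would split the $(\xi,\lambda)$-region at $\spk{\xi}^{r_q}=|\lambda|$: where $\spk{\xi}^{r_q}\le|\lambda|$ the resolvent factor contributes $\spk{\lambda}^{-1}$ directly; in the complementary region one trades surplus $\xi$-decay against $\spk{\lambda}^{-1}$ via the elementary $\spk{\xi}^{-r_q-N}\le\spk{\lambda}^{-1}\spk{\xi}^{-N}$. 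A Borel choice of $t_\nu$ fast enough then delivers the estimate $\spk{\lambda}^{-1}\spk{\xi}^{-N}$ for $R^{(k)}$ and all its derivatives, simultaneously for every $N\in\nz$ and $\alpha,\beta\in\nz_0^n$, which is exactly \eqref{dn01.N}.
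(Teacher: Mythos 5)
Your proposal reproduces the paper's proof in essence: the same asymptotic summation $G=G^{(0)}+\sum_{\nu\ge1}\chi(\eps_\nu|\xi|)\,G^{(\nu)}$, the same reading of \eqref{dn01.J}--\eqref{dn01.L} from Lemma~\ref{dn03.2} and Proposition~\ref{dn01.7}, the same telescoping cancellation via \eqref{dn01.E} to obtain \eqref{dn01.M}, and the same extraction of the $\spk{\lambda}^{-1}$ in \eqref{dn01.N} from the resolvent factors that survive in the truncated expansion and in the Leibniz-product remainder, with the right-hand identity derived via a right parametrix. Two small remarks: the region split at $\spk{\xi}^{r_q}=|\lambda|$ is superfluous, because $r_j\ge0$ already gives $\spk{\xi}^{r_j}+|\lambda|\ge 1+|\lambda|\ge\spk{\lambda}$ pointwise on $\rz^n\times\Lambda$; and the paper makes the $\lambda$-decay of the $\#$-product remainder precise by decomposing $R^{(0)}=S^1+S^2+S^3$ (tail $G-Q^N$, oscillatory-integral remainder of the truncated composition, and the telescoping residue $J^N-1$) and invoking Proposition~\ref{dn01.7} inside the oscillatory integral $R^{\gamma,\theta}$ — a step your plan acknowledges but leaves implicit.
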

\begin{proof}
The symbol $G$ is defined by means of assymptotic summation as 
 $$G(x,\xi;\lambda):=G^{(0)}(x,\xi;\lambda)+\smsum_{\nu=1}^\infty 
   \chi(\eps_\nu|\xi|)G^{(\nu)}(x,\xi;\lambda),$$
where $\chi:\rz\to[0,1]$ is a smooth 0-excision 
function\footnote{i.e. $\chi$ vanishes identically in a neighborhood of 0 and $1-\chi$ is a smooth function 
with compact support}
and $\eps_1>\eps_2>\ldots\xrightarrow{j\to\infty}0$ sufficiently fast. 
By Lemma \ref{dn03.2} and Proposition \ref{dn01.7} the estimates 
\eqref{dn01.J}, \eqref{dn01.K}, and \eqref{dn01.L} then hold. 
It remains to verify \eqref{dn01.M}. To this end let us define 
\begin{align*}
 Q^N(x,\xi;\lambda)&=\sum_{\nu=0}^{N-1}G^{(\nu)}(x,\xi;\lambda),\\
 J^N(x,\xi;\lambda)&=\sum_{|\alpha|=0}^{N-1}\frac{1}{\alpha!}
   \partial^\alpha_\xi Q^N(x,\xi;\lambda)\,D^\alpha_x(A(x,\xi)-\lambda)
\end{align*} 
for $N\in\nz$. A direct computation shows that 
 $$J^N(x,\xi;\lambda)-1=\sum_{\substack{\nu<N,|\alpha|<N\\ \nu+|\alpha|\ge N}}\frac{1}{\alpha!}
   \partial^\alpha_\xi G^{(\nu)}(x,\xi;\lambda)\,D^\alpha_x A(x,\xi).$$
By Lemma \ref{dn03.2} and Proposition \ref{dn01.7} it is easily seen that then 
\begin{equation}\label{paramA}
 |\partial^\alpha_\xi\partial^\beta_x (J^N_{ij}(x,\xi;\lambda)-1)|\le C_{\alpha\beta}\,
 (\spk{\xi}^{r_i}+|\lambda|)^{-1}\spk{\xi}^{l_i+m_j-(1-\delta)N}.
\end{equation}
Let us now suppress the variables $x$ and $\xi$ from the notation. Then, for any $N$,  
\begin{align*}
  R^{(0)}(\lambda)&=G(\lambda)\#(A-\lambda)-1\\
 &=[(G(\lambda)-Q^N(\lambda))\#(A-\lambda)]  
  +[Q^N(\lambda)\#(A-\lambda)-J^N(\lambda)]+[J_N(\lambda)-1]\\
 &=: S^1(\lambda)+S^2(\lambda)+S^3(\lambda),
\end{align*}
where $\#$ denotes the Leibniz product. The construction of $G$ and Proposition \ref{dn01.7}, 
now yield that   
 $$\spk{\lambda}^{2}(G(\lambda)-Q^N(\lambda))_{ij}\,\in\,
   S^{l_i+m_j-(1-\delta)N},\qquad 1\le i,j\le q,$$
uniformly in $\lambda\in\Lambda$. From this it follows that 
 $$\spk{\lambda}S^1_{ij}(\lambda)\,\in\,S^{l_i+m_j+r_1-(1-\delta)N},\qquad 1\le i,j\le q,$$
uniformly for $\lambda\in\Lambda$. By \eqref{paramA}, the same is true for 
the components of $\spk{\lambda}S^3(\lambda)$. 
By the standard composition formula for pseudodifferential operators, we obtain 
 $$S^2(\lambda)=N\sum_{|\gamma|=N}\int_0^1\frac{(1-\theta)^{N-1}}{\gamma!}
   R^{\gamma,\theta}(\lambda)\,d\theta$$
with 
 $$R^{\gamma,\theta}(x,\xi;\lambda)=\iint e^{-iy\eta}
   \partial^\gamma_\xi Q^N(x,\xi+\theta\eta;\lambda) D^\gamma_x A(x+y,\xi)\,dy\dbar\xi,$$
where the integral has to be understood as an oscillatory integral. Employing again Proposition 
\ref{dn01.7}, it is straightforward to see that 
$\spk{\lambda}R^{\gamma,\theta}_{ij}(\lambda)\in S^{l_i+m_j-(1-\delta)|\gamma|}$ uniformly 
in $\lambda\in\Lambda$ and $0\le\theta\le1$. This clearly implies that 
$\spk{\lambda}S^2_{ij}(\lambda)\in S^{l_i+m_j-(1-\delta)N}$ uniformly in $\lambda\in\Lambda$. 
Since $N$ was arbitrary, it follows that $\spk{\lambda}R^{(0)}_{ij}(\lambda)\in S^{-\infty}$ 
uniformly in $\lambda\in\Lambda$. For $R^{(1)}$ one can argue analogously by constructing a 
right-parametrix to $A(x,D)-\lambda$ and the using that this coincides with 
$G(x,D;\lambda)$ up to a smoothing remainder (which also has the requested decay in $\lambda$). 
\end{proof}

%%%%%%%%%%%%%%%%%%%%%%%%%%%%%%%%%%%%%%%%%%%%%%%%%%%%%%%%%%%%%%%%%%%%%%%%%%%%%%%%%%%%%%%
\subsection{Diagonalization}\label{sec:dn03.1}

The following theorem states, roughly speaking, that each elliptic system can be 
transformed to diagonal form via conjugation with a suitable isomorphism. This transformation 
also preserves $\Lambda$-ellipticity. The theorem was first proved by Kozhevnikov \cite{Kozh2}
for systems on compact manifolds. We follow his proof but extend his result both to operators 
on $\rz^n$ and more general symbol classes. 

\begin{theorem}\label{dn03.1}
Let $A(x,D)$ be $\Lambda$-elliptic in the sense of Definition {\rm\ref{dn02.5}}. 
Then there exists a $(q\times q)$-matrix 
 $$V(x,D)=\Big(v_{ij}(x,D)\Big)_{1\le i,j\le q}$$
with $v_{ii}\equiv 1$ and 
 $$v_{ij}(x,\xi)\in
   S^{-m_i+m_j}_\delta(\rz\times\rz)\,\cap\,S^{l_i-l_j}_\delta(\rz\times\rz)\;\footnotemark$$
such\footnotetext{Note that $-m_i+m_j<l_i-l_j$ if $i<j$, and $l_i-l_j<-m_i+m_j$ if $i>j$.} 
that $V(x,D)$ is invertible and 
 $$V(x,D)^{-1}A(x,D)V(x,D)
   =\mathrm{diag}\Big(\wt a_{11}(x,D),\ldots,\wt a_{qq}(x,D)\Big)+R^{(-\infty)}(x,D),$$
where 
\begin{itemize}
 \item[i$)$] $r^{(-\infty)}_{ij}\in S^{-\infty}(\rz\times\rz)$ for all $1\le i,j\le q$, 
 \item[ii$)$] each $\wt a_{ii}(x,\xi)\in S^{r_i}_\delta(\rz\times\rz)$ is $\Lambda$-elliptic, i.e. 
   $$|(\wt a_{ii}(x,\xi)-\lambda)\big|\ge C\,(\spk{\xi}^{r_i}+|\lambda|)
   \qquad \forall\;x\in\rz\quad\forall\;|\xi|\ge R\quad\forall\;\lambda\in\Lambda.$$
\end{itemize}
\end{theorem}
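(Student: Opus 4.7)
The plan is to proceed by induction on $q$. The case $q = 1$ is trivial (take $V = 1$ and $\tilde a_{11} = a_{11}$, whose $\Lambda$-ellipticity is Definition \ref{dn02.5} with $\kappa = 1$). For the inductive step, I reduce $A(x,D)$ modulo smoothing to block-diagonal form $\mathrm{diag}\big(\tilde a_{11}(x,D),\, A'(x,D)\big)$ via a conjugation $V_1$, where $A'$ is a $(q-1)\times(q-1)$ Douglis-Nirenberg system with orders $l'_i := l_{i+1}$, $m'_j := m_{j+1}$ that is again $\Lambda$-elliptic. The inductive hypothesis applied to $A'$ then produces $V'$, and $V := V_1 \cdot \mathrm{diag}(1, V')$ is the desired conjugation.

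For the block-diagonalization, Definition \ref{dn02.5} with $\kappa = 1$ reads $|a_{11}(x,\xi) - \lambda| \ge C(\spk{\xi}^{r_1} + |\lambda|)$ for $|\xi| \ge R$, $\lambda \in \Lambda$. In particular $a_{11}$ is scalar-elliptic of order $r_1$ and admits a parametrix $b_{11}(x,D)$ of order $-r_1$. I look for $V_1 = I + W_1$ with $W_1$ supported in the first row and first column and vanishing on the diagonal. Requiring that the first row and column of $V_1^{-1}\# A\# V_1$ vanish off the diagonal yields, at the symbol level, Sylvester-type equations with leading-order solutions $w_{i1} \approx a_{i1}\# b_{11}$ (of order $l_i - l_1$) for $i \geq 2$ and $w_{1j} \approx -b_{11}\# a_{1j}$ (of order $m_j - m_1$) for $j \geq 2$. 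Since $r_i < r_1$ gives $l_i - l_1 < m_1 - m_i$ and $m_j - m_1 < l_1 - l_j$, these are already the sharper of the two orders prescribed by the theorem for $V$. Subleading corrections are computed iteratively, each one gaining a factor $\spk{\xi}^{-(1-\delta)}$ via the Leibniz product, and summed asymptotically in the style of Theorem \ref{dn01.8}. The resulting $V_1$ is invertible modulo smoothing on $\bigoplus_j H^{s+m_j}_p(\rz^n)$ (its diagonal principal symbol is $1$ and the only nontrivial off-diagonal principal symbols occur in the first row), and $V_1^{-1} A V_1 = \mathrm{diag}(\tilde a_{11}, A') + R$ with $R$ smoothing, where $\tilde a_{11} = a_{11}$ plus terms of order $\leq r_2$, hence $\Lambda$-elliptic of order $r_1$.

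To feed the induction, I must verify that $A'$ is $\Lambda$-elliptic. Similarity together with the block structure gives, at the symbol level modulo lower-order terms, the factorization $P_A = (\tilde a_{11} - \lambda)\,P_{A'}$. Combining the bound $|P_A| \ge C \prod_{i=1}^q (\spk{\xi}^{r_i} + |\lambda|)$ with the standard upper estimate $|\tilde a_{11} - \lambda| \le C(\spk{\xi}^{r_1} + |\lambda|)$ yields $|P_{A'}| \ge C' \prod_{i=1}^{q-1}(\spk{\xi}^{r'_i} + |\lambda|)$ for $|\xi|$ sufficiently large, so $A'$ satisfies Definition \ref{dn02.4} and hence, by Theorem \ref{dn02.6}, also Definition \ref{dn02.5}. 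The inductive hypothesis then supplies $V'$ with entries $v'_{ij} \in S^{-m'_i + m'_j}_\delta \cap S^{l'_i - l'_j}_\delta$ and $\Lambda$-elliptic diagonal entries $\tilde a_{jj}$ of order $r_j$ for $j = 2, \ldots, q$. A direct Leibniz-product computation shows that the entries of $V := V_1 \cdot \mathrm{diag}(1, V')$ lie in $S^{-m_i + m_j}_\delta \cap S^{l_i - l_j}_\delta$, closing the induction.

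The main obstacle is the iterative construction of $W_1$: one must solve the Sylvester-type symbol equations step by step while ensuring that every correction stays within the sharper of the two required orders, so that the asymptotic summation produces $w_{i1}\in S^{l_i - l_1}_\delta$ and $w_{1j}\in S^{m_j - m_1}_\delta$ rather than slightly larger symbol classes. This bookkeeping is parallel to the parametrix construction in Theorem \ref{dn01.8}, but simplified by the absence of the spectral parameter.
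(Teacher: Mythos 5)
Your induction-on-$q$ block-diagonalization is a genuinely different route from the paper's columnwise recursion, and the order bookkeeping you do for the first row and column of $V_1$ is correct. However, there are two gaps. The first is a circularity: to verify that $A'$ is $\Lambda$-elliptic you invoke the bound $|P_A|\ge C\prod_{i=1}^q(\spk{\xi}^{r_i}+|\lambda|)$, which is Definition~\ref{dn02.4} for $A$, but the hypothesis you are given is Definition~\ref{dn02.5}, and the implication (Definition~\ref{dn02.5} $\Rightarrow$ Definition~\ref{dn02.4}) is exactly Corollary~\ref{dn03.1.5}, whose proof relies on Theorem~\ref{dn03.1} itself. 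Note that Definition~\ref{dn02.5} at $\kappa=q$ controls only $|\det(A-\lambda E_q)|$, where $E_q$ shifts a single corner entry, which is not the same as $|\det(A-\lambda I)|$, so the characteristic-polynomial bound does not come for free. You can avoid the loop by staying inside Definition~\ref{dn02.5}: since $A'$ is, modulo lower order, the Schur complement of $a_{11}$ in $A$, one has $\det(A[\kappa+1]-\lambda E_{\kappa+1})=a_{11}\cdot\det(A'[\kappa]-\lambda E_\kappa)$ at the symbol level, and dividing the lower bound of Definition~\ref{dn02.5} for $A$ at index $\kappa+1$ by $|a_{11}|\le C\spk{\xi}^{r_1}$ yields the bound for $A'$ at index $\kappa$ directly. (The paper's Step~2 uses exactly this strategy, via $\det(B[j]-\lambda E_j)/\det B[j-1]$, never passing through Definition~\ref{dn02.4}.)

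The second gap is invertibility. You establish that $V_1$ is ``invertible modulo smoothing'' and then write $V_1^{-1}AV_1$ as though $V_1$ were an actual isomorphism; but invertibility modulo $S^{-\infty}$ only makes $V_1$ Fredholm, and the theorem asserts a genuine isomorphism. The paper devotes Step~3 to exactly this point: the candidate conjugator is split into a strictly lower-triangular and a strictly upper-triangular part, cutoff operators $\Sigma(\eps)$, $\Sigma(\rho)$ are inserted, nilpotency of the lower part gives uniform bounds, and the strict negativity of the orders of the upper part lets one shrink it, producing an isomorphism that differs from the original by a smoothing operator. Your $V_1$ has the same mixed nature (first-column entries of strictly negative order $l_i-l_1$, first-row entries of order $m_j-m_1$, which need not be negative), so an analogous smoothing modification is required at every stage of the recursion, and one must verify it survives the final product $V = V_1\cdot\mathrm{diag}(1,V')$.
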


Before we come to the proof, let us clarify that the invertibility of $V(x,D)$ refers to all 
induced operators 
$\mathop{\mbox{$\oplus$}}\limits_{j=1}^q H^{s-l_j}_p\to
\mathop{\mbox{$\oplus$}}\limits_{i=1}^q H^{s-l_i}_p$ as well as 
$\mathop{\mbox{$\oplus$}}\limits_{j=1}^q H^{s+m_j}_p\to
\mathop{\mbox{$\oplus$}}\limits_{i=1}^q H^{s+m_i}_p$ for arbitrary $s\in\rz$ and $1<p<\infty$. 
By spectral invariance of pseudodifferential operators (see \cite{LeSc}, for example), the inverse is again 
of the form $W(x,D)=\big(w_{ij}(x,D)\big)_{1\le i,j\le q}$ with  
 $$w_{ij}(x,\xi)\in
   S^{-m_i+m_j}_\delta(\rz\times\rz)\,\cap\,S^{l_i-l_j}_\delta(\rz\times\rz)$$
and, for a suitable 0-excision function $\chi(\xi)$, 
\begin{equation}\label{dn03.A.5}
  \chi(\xi)\big(V(x,\xi)^{-1}-W(x,\xi)\big)_{ij}\in
  S^{-m_i+m_j-(1-\delta)}_\delta(\rz\times\rz)\,\cap\,S^{l_i-l_j-(1-\delta)}_\delta(\rz\times\rz).
\end{equation}

\begin{proof}[Proof of Theorem \ref{dn03.1}]
It shall be more convenient to consider instead of $A(x,D)$ the system 
 $$B:=L(x,D)A(x,D)L(x,D)^{-1}=\Big(b_{ij}(x,D)\Big)_{1\le i,j\le q},$$
where $L(x,D)=\mathrm{diag}(\spk{D}^{-l_1},\ldots,\spk{D}^{-l_q})$. Then we have 
 $$b_{ij}(x,D)=\spk{D}^{-l_i}a_{ij}(x,D)\spk{D}^{l_j}\in S^{r_j}_\delta(\rz\times\rz).$$  
{\bf Step 1:} \ In the first part of the proof, we construct operators 
 $$S=\Big(s_{ij}(x,D)\Big)_{1\le i,j\le q}, \qquad 
   D=\mathrm{diag}\Big(d_{11}(x,D),\ldots,d_{qq}(x,D)\Big)$$
such that 
\begin{equation}\label{dn03.A}
 BS\equiv SD\quad\mathrm{mod}\quad S^{-\infty}
\end{equation}
and $s_{ii}(x,\xi)\equiv 1$. These operators will be obtained by the Ansatz 
 $$S=\smsum_{\nu=0}^\infty S^{(\nu)},\qquad D=\smsum_{\mu=0}^\infty D^{(\mu)}$$
(in the sense of asymptotic summation of pseudodifferential operators) with diagonal 
matrices $D^{(\mu)}$, and 
 $$s_{ij}^{(\nu)}(x,\xi)\in S^{\min(0,r_j-r_i)-\nu}_\delta,\qquad 
   d_{ii}^{(\mu)}(x,\xi)\in S^{r_i-\mu}_\delta.$$
In the following we shortly write 
 $$B_{ij}:=b_{ij}(x,D),\qquad S^{(\nu)}_{ij}:=s_{ij}^{(\nu)}(x,D),\qquad 
   D_{ii}^{(\nu)}:=d_{ii}^{(\nu)}(x,D).$$
Now fix an arbitrary $j\in\{1,\ldots,q\}$. Then 
\begin{align}
 B_{im}S_{mj}^{(\nu)}\in S^{r_j+\min(0,r_m-r_j)-\nu}_\delta,\qquad 
 S_{ij}^{(\nu)}D_{jj}^{(\mu)}\in S^{r_j+\min(0,r_j-r_i)-\nu-\mu}_\delta.
\end{align}
Thus, using the above Ansatz, the statement   
 $$(BS)_{ij}\equiv (SD)_{ij}\quad\mathrm{mod}\quad S^{r_j-N-\eps}_\delta,\qquad 
   \eps:=\min_{1\le k\le q-1}(r_k-r_{k+1}),$$
with $N\in\nz_0$ is equivalent to 
\begin{equation}\label{dn03.B}
 \smsum_{\nu=0}^{N}\smsum_{m=1}^j B_{im}S^{(\nu)}_{mj}
 -\smsum_{\mu+\nu\le N}S_{ij}^{(\nu)}D_{jj}^{(\mu)}\;\in\;S^{r_j-N-\eps}_\delta.
\end{equation}
We now show that we can iteratively construct operators $S^{(0)},S^{(1)},\ldots$ and 
$D^{(0)},D^{(1)},\ldots$ with components of the required order and such that the expression 
in \eqref{dn03.B} equals zero modulo $S^{-\infty}$. 
 
In fact, for $N=0$, to obtain zero in \eqref{dn03.B} is equivalent to
\begin{align}
 \smsum_{m=1}^j B_{im}S^{(0)}_{mj}&=0, & i<j,\label{dn03.C}\\
 \smsum_{m=1}^j B_{im}S^{(0)}_{mj}-S_{ij}^{(0)}D^{(0)}_{jj}&=0, & i\ge j,\label{dn03.D}
\end{align}
If we set $S_{jj}^{(0)}=1$, then \eqref{dn03.C} and the equation for $i=j$ from 
\eqref{dn03.D} can be written in the following form: 
\begin{equation}\label{dn03.E}
   B[j]
   \begin{pmatrix}
    S_{1j}^{(0)}\\ \vdots\\ S_{k-1,j}^{(0)}\\1
   \end{pmatrix}
   =
   \begin{pmatrix}
    0\\ \vdots\\ 0\\ D_{jj}^{(0)}
   \end{pmatrix}
   \iff
   \begin{pmatrix}
     B_{11}&\ldots & B_{1,j-1}& 0\\
     \vdots&\ddots & \vdots&\vdots\\
     B_{j-1,1}&\ldots & B_{j-1,j-1}& 0\\
    B_{j1}&\ldots&B_{j,j-1}&-1 
   \end{pmatrix}
   \begin{pmatrix}
    S_{1j}^{(0)}\\ \vdots\\ S_{j-1,j}^{(0)}\\D_{jj}^{(0)}
   \end{pmatrix}
   =-
   \begin{pmatrix}
    B_{1j}\\ \vdots\\ \vdots\\ B_{jj}
   \end{pmatrix}.
\end{equation}
However, since $B[j-1]$ is elliptic by assumption, this system determines  
$S_{1j}^{(0)},\ldots,S_{j-1,j}^{(0)}, D_{jj}^{(0)}$ uniquely (up to $S^{-\infty}$). 
Moreover, by Cramer's rule we obtain 
 $$\chi(\xi)d_{jj}^{(0)}(x,\xi)
   =\chi(\xi)\frac{\mathrm{det}B[j](x,\xi)}{\mathrm{det}B[j-1](x,\xi)}
   \quad\mathrm{mod}\quad S^{r_j-(1-\delta)}_\delta,$$
with a suitable $0$-excision function $\chi$. Therefore $d_{jj}^{(0)}(x,\xi)$ is elliptic of 
order $r_j$ and we can determine $S_{j+1,j}^{(0)},\ldots,S_{qj}^{(0)}$ from the remaining 
equations of \eqref{dn03.D}. 

Now assume $S^{(0)},\ldots,S^{(N-1)}$ and $D^{(0)},\ldots,D^{(N-1)}$ have been determined for 
some $N\in\nz$. If we then denote by $R_{ij}^{(N-1)}$ the sum of all summands in \eqref{dn03.B} 
which are determined, the expression in\eqref{dn03.B} equals zero if and only if 
\begin{align}
 \smsum_{m=1}^j B_{im}S^{(N)}_{mj}&=-R_{ij}^{(N-1)}, & i<j,\qquad\qquad\label{dn03.F}\\
 \smsum_{m=1}^j B_{im}S^{(N)}_{mj}-S_{ij}^{(0)}D^{(N)}_{jj}-S_{ij}^{(N)}D^{(0)}_{jj}
   &=-R_{ij}^{(N-1)}, & i\ge j. 
   \qquad\qquad\label{dn03.G}
\end{align}
Setting $S_{jj}^{(N)}=0$, \eqref{dn03.F} together with the equation for $i=j$ from 
\eqref{dn03.G} is equivalent to 
 $$\begin{pmatrix}
     B_{11}&\ldots & B_{1,j-1}& 0\\
     \vdots&\ddots & \vdots&\vdots\\
     B_{j-1,1}&\ldots & B_{j-1,j-1}& 0\\
    B_{j1}&\ldots&B_{j,j-1}&-1 
   \end{pmatrix}
   \begin{pmatrix}
    S_{1j}^{(N)}\\ \vdots\\ S_{j-1,j}^{(N)}\\D_{jj}^{(N)}
   \end{pmatrix}
   =-
   \begin{pmatrix}
    R_{1j}^{(N-1)}\\ \vdots\\ \vdots\\ R_{jj}^{(N-1)}
   \end{pmatrix}
   .$$
By this system $D_{jj}^{(N)}$ and the $S_{ij}^{(N)}$ for $i\le j$ are uniquely determined, 
up to smoothing operators. The remaining $S_{ij}^{(N)}$, $i>j$, are then determined 
by \eqref{dn03.G}. 

{\bf Step 2:} \ The next step is to verify the $\Lambda$-ellipticity of $D_{jj}^{(0)}$. 
To this end we insert 
the parameter $\lambda\in\Lambda$ in the equation for $i=j$ of \eqref{dn03.D}, writing 
 $$\smsum_{m=1}^{j-1} B_{im}S^{(0)}_{mj}+(B_{jj}-\lambda)-(D^{(0)}_{jj}-\lambda)=0.$$
Arguing similarly as above in \eqref{dn03.E}, we obtain 
 $$\chi(\xi)\big(d_{jj}^{(0)}(x,\xi)-\lambda\big)=
   \chi(\xi)\frac{\mathrm{det}\big(B[j](x,\xi)-\lambda E_j\big)}{\mathrm{det}B[j-1](x,\xi)}
   \quad\mathrm{mod}\quad S^{r_j-(1-\delta)}_\delta,$$
with a remainder independent of $\lambda$.\footnote{More precisely, one obtains a system 
analogous to \eqref{dn03.E}, replacing $D_{jj}^{(0)}$ and $B_{jj}^{(0)}$ by 
$D_{jj}^{(0)}-\lambda$ and $B_{jj}^{(0)}-\lambda$, respectively. One then has to observe that 
the symbol of the operator on the left-hand side differs by a lower order term, which does not 
depend on $\lambda$, from the pointwise product of the respective symbols.} 

{\bf Step 3:} \ We shall modify $S$ by smoothing terms in such a way that $S$ is invertible. 
To this end we decompose $S$ in its lower left, upper right, and diagonal part, i.e. 
$S=1+L+U$ with   
 $$L=\begin{pmatrix}
        0&\ldots&\ldots&0\\
        S_{21}&\ddots&&\vdots\\
        \vdots&\ddots&\ddots&\vdots\\
        S_{q1}&\ldots&S_{q,q-1}&0
       \end{pmatrix},\qquad
   U=\begin{pmatrix}
        0&S_{12}&\ldots&S_{1q}\\
        \vdots&\ddots&\ddots&\vdots\\
        \vdots& &\ddots& S_{q-1,q}\\
        0&\ldots&\ldots&0
       \end{pmatrix}.$$
Moreover, let $\sigma(\xi)$ be a 0-excision function, and 
 $$\Sigma(\eps)=\sigma(\eps D),\qquad 0<\eps \le 1.$$
We then obtain that 
 $$L^\prime(\eps):=1+L\Sigma(\eps):X\lra X$$
is an isomorphism both for $X=\mathop{\oplus}\limits_{j=1}^q H^{s}_p(\rz^n)$ and 
$X=\mathop{\oplus}\limits_{j=1}^q H^{s+r_j}_p(\rz^n)$, with inverse 
 $$L^\prime(\eps)^{-1}=1-L\Sigma(\eps)+\ldots+(-L\Sigma(\eps))^{q-1}.$$
Since this operator is a lower left triangular matrix with $1$'s on the diagonal and 
$\{\sigma(\eps\xi)\st 0<\eps\le 1\}$ is a bounded subset of $S^0(\rz^n_\xi)$, 
there exists a constant $C\ge 1$ such that 
\begin{equation}\label{dn03.H}
 1\le\|L^\prime(\eps)^{-1}\|_{\calL(X)}\le C\qquad\forall\;0<\eps\le1.
\end{equation}
Since each component $u_{ij}(x,D)$ of $U$ has strictly negative order by construction of $S$, 
it follows that 
 $$u_{ij}(x,\xi)\sigma(\rho\xi)\xrightarrow{\rho\to 0}0\quad\text{in}\quad S^0_\delta.$$
This together with \eqref{dn03.H} allows us to choose $0<\rho^*\le 1$ such that 
 $$L^\prime(\eps)+U\Sigma(\rho^*):
   \mathop{\oplus}_{j=1}^q H^{s}_p(\rz^n)\lra\mathop{\oplus}_{j=1}^q H^{s}_p(\rz^n)$$
is an isomorphism for any $0<\eps\le 1$. Arguing in an analogous 
way\footnote{Note that the nontrivial components $l_{ij}(x,D)$ of $L$ are of order $0$ and $0<r_j-r_i$.} 
for the operator 
 $$U^\prime(\rho):=1+U\Sigma(\rho):X\lra X,$$
we find $0<\eps^*\le 1$ such that 
 $$U^\prime(\rho)+L\Sigma(\eps^*):
   \mathop{\oplus}_{j=1}^q H^{s+r_j}_p(\rz^n)\lra\mathop{\oplus}_{j=1}^q H^{s+r_j}_p(\rz^n)$$
is an isomorphism for any $0<\rho\le 1$. It follows that 
 $$\wt S:=1+L\Sigma(\eps^*)+U\Sigma(\rho^*):X\lra X$$
is an isomorphism (for both choices of $X$). Moreover, 
 $$S-\wt S=L(1-\sigma)(\eps^*D)+U(1-\sigma)(\rho^*D)$$
is a smoothing operator, since $(1-\sigma)(\xi)$ is compactly supported. 

{\bf Step 4:} \ In view of \eqref{dn03.A} and Step 3, we may assume that 
 $$BS=SD+R_1=S(D+S^{-1}R_1)$$
for some smoothing operator $R_1$. Due to the spectral invariance of pseudodifferential 
operators, also $S^{-1}R_1$ is a smoothing operator. Thus $BS=S\wt{D}$ with a $\wt{D}$ that 
differs from $D$ of Step 1 by a smoothing operator. Defining now 
 $$V(x,D):=L(x,D)^{-1}\,S\,L(x,D), \qquad \wt{A}(x,D):=L(x,D)^{-1}\,D\,L(x,D),$$
and using that $B=L(x,D)\,A(x,D)\,L(x,D)^{-1}$ by definition, we obtain that 
 $$V(x,D)^{-1}\,A(x,D)\,V(x,D)=\wt{A}(x,D)\quad\mathrm{mod}\,S^{-\infty},$$
and $V$ as well as the diagonal matrix $\wt{A}$ have the properties described in the theorem. 
Thus the proof is complete.
\end{proof}

\begin{corollary}\label{dn03.1.5}
$\Lambda$-ellipticity in the sense of Definition {\rm\ref{dn02.5}} implies 
$\Lambda$-ellipticity in the sense of Definition {\rm\ref{dn02.4}}.
\end{corollary}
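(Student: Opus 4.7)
The plan is to use Theorem \ref{dn03.1} to diagonalize $A(x,D)$ and thereby reduce the determinant estimate to the scalar case. Theorem \ref{dn03.1}, applied to the $\Lambda$-elliptic-with-principal-minors system $A(x,D)$, yields an invertible $V(x,D)$ and a diagonal system $\wt A(x,D)=\mathrm{diag}(\wt a_{11}(x,D),\ldots,\wt a_{qq}(x,D))$ with each $\wt a_{ii}$ scalar $\Lambda$-elliptic of order $r_i$, and $V(x,D)^{-1}A(x,D)V(x,D)=\wt A(x,D)+R^{(-\infty)}(x,D)$. Since $\lambda I$ commutes with $V(x,D)$, subtracting $\lambda$ gives the operator identity $V(x,D)^{-1}(A(x,D)-\lambda)V(x,D)=\wt A(x,D)-\lambda+R^{(-\infty)}(x,D)$.

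Next I would pass to the symbol level: expanding the Leibniz product and invoking \eqref{dn03.A.5} to replace the symbol of $V(x,D)^{-1}$ by the pointwise matrix inverse $V(x,\xi)^{-1}$ modulo symbols of order $(1-\delta)$ smaller, one obtains the pointwise matrix identity
\[
V(x,\xi)^{-1}\bigl(A(x,\xi)-\lambda\bigr)V(x,\xi)=\wt A(x,\xi)-\lambda+E(x,\xi),
\]
where the error $E(x,\xi)$ is independent of $\lambda$ and (after exploiting the cancellations built into the construction of $V$) has $(i,j)$-entry of order at most $r_i-(1-\delta)$. Since $\det V\cdot\det V^{-1}=1$, taking determinants yields $\det(A(x,\xi)-\lambda)=\det(\wt A(x,\xi)-\lambda)\cdot\det\bigl(I+(\wt A(x,\xi)-\lambda)^{-1}E(x,\xi)\bigr)$. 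The scalar $\Lambda$-ellipticity of each $\wt a_{ii}$ gives $|\det(\wt A-\lambda)|\geq C\smprod_{i=1}^q(\spk{\xi}^{r_i}+|\lambda|)$, while each entry of $(\wt A-\lambda)^{-1}E$ is bounded by $C\spk{\xi}^{r_i-(1-\delta)}(\spk{\xi}^{r_i}+|\lambda|)^{-1}\leq C\spk{\xi}^{-(1-\delta)}$, which tends to zero as $|\xi|\to\infty$ uniformly in $\lambda\in\Lambda$. Hence $|\det(I+(\wt A-\lambda)^{-1}E)|\geq 1/2$ for $|\xi|$ large enough, producing the lower bound required in Definition \ref{dn02.4}.

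The main obstacle I anticipate is precisely the order estimate on $E$: a naive composition of Douglis-Nirenberg orders for $v^{-1}$, $a$, $v$ would give $(v^{-1}av)_{ii}$ only of order $r_1$ rather than $r_i$, and the improvement reflects exactly the cancellations engineered by the Cramer-rule construction of $V$ in the proof of Theorem \ref{dn03.1}. Extracting these with uniform control in the parameter $\lambda$ is the technical heart of the argument; once done, the rest is routine determinant bookkeeping. As an alternative, one could mimic the contradiction strategy of Step 2 in the proof of Theorem \ref{dn02.6}: assume the lower bound fails along a sequence $(x_k,\xi_k,\lambda_k)$, apply the diagonalization to translate this into a failure of the product estimate for $\smprod_i(\wt a_{ii}(x_k,\xi_k)-\lambda_k)$, and contradict the scalar $\Lambda$-ellipticity of the $\wt a_{ii}$.
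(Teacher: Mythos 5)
Your overall strategy -- diagonalize via Theorem~\ref{dn03.1}, pass to the symbol level, factor the determinant as $\det(\wt A-\lambda)\cdot\det\bigl(I+(\wt A-\lambda)^{-1}E\bigr)$, and show the second factor stays away from zero -- is exactly the route the paper takes (the paper encapsulates the second factor in Corollary~\ref{dn03.2.5} rather than re-deriving the estimate). However, there is a concrete gap in the entry-wise estimate, and it is precisely at the spot you yourself flagged as the ``technical heart'': the claimed order of the error $E$ is not correct. What one actually gets from \eqref{dn03.A.5} and the Leibniz-versus-pointwise-product comparison is
\[
E_{ij}(x,\xi)\in S^{l_i+m_j-(1-\delta)}_\delta ,
\]
not $S^{r_i-(1-\delta)}_\delta=S^{l_i+m_i-(1-\delta)}_\delta$. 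The two coincide only when $m_j=m_i$. Your subsequent bound therefore reads
\[
\bigl|\bigl[(\wt A-\lambda)^{-1}E\bigr]_{ij}\bigr| \le C\,\bigl(\spk{\xi}^{r_i}+|\lambda|\bigr)^{-1}\spk{\xi}^{l_i+m_j-(1-\delta)} \le C\,\spk{\xi}^{m_j-m_i-(1-\delta)},
\]
which need not tend to zero: if $m_j-m_i\ge 1-\delta$ (this happens below the diagonal, e.g.\ in the thermoelastic plate system of Section~\ref{sec:dn06} with $m_1>m_2>m_3$ and $m_1-m_3$ large), the entry does not vanish as $|\xi|\to\infty$.

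The missing ingredient is a determinant-preserving order normalization. Conjugating by $M(\xi):=\mathrm{diag}\bigl(\spk{\xi}^{m_1},\ldots,\spk{\xi}^{m_q}\bigr)$ does not change the determinant but repairs the entry-wise bound:
\[
\bigl|\bigl[M(\wt A-\lambda)^{-1}EM^{-1}\bigr]_{ij}\bigr|\le C\,\bigl(\spk{\xi}^{r_i}+|\lambda|\bigr)^{-1}\spk{\xi}^{r_i-(1-\delta)}\le C\,\spk{\xi}^{-(1-\delta)} ,
\]
uniformly in $(x,\lambda)\in\rz^n\times\Lambda$, so $\det\bigl(I+(\wt A-\lambda)^{-1}E\bigr)=\det\bigl(I+M(\wt A-\lambda)^{-1}EM^{-1}\bigr)\to 1$ as $|\xi|\to\infty$. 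This conjugation is exactly what the proof of Corollary~\ref{dn03.2.5} does (there with both $L$ and $M$); so instead of re-deriving the bound, you can simply observe that $\wt A$ is $\Lambda$-elliptic in the sense of Definition~\ref{dn02.4} (scalar case on the diagonal) and that $R=E$ is a perturbation of order $l_i+m_j-(1-\delta)$ in each component, whence Corollary~\ref{dn03.2.5} applies to $\wt A$ and $\wt A+R$, giving $\Lambda$-ellipticity of $A$.
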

\begin{proof}
Let $A(x,D)$ be $\Lambda$-elliptic in the sense of Definition {\rm\ref{dn02.5}}. 
We use the notation of Theorem \ref{dn03.1}. Let us set 
 $$W(x,D):=V(x,D)^{-1},\qquad \wt{A}(x,D)=\mathrm{diag}\Big(\wt a_{11}(x,D),\ldots,\wt a_{qq}(x,D)\Big).$$ 
Using the standard property that 
$b_1\#b_2-b_1b_2\in S^{\mu_1+\mu_2-(1-\delta)}_\delta$ for symbols $b_j(x,\xi)\in S^{\mu_j}_\delta$ together 
with \eqref{dn03.A.5}, it is easily verified that  
 $$V(x,\xi)^{-1}A(x,\xi)V(x,\xi)=\wt{A}(x,\xi)+R(x,\xi)$$
for sufficiently large $|\xi|$, with a remainder $R(x,\xi)=\Big(r_{ij}(x,\xi)\Big)_{1\le i,j\le q}$ 
satisfying $r_{ij}(x,\xi)\in S^{l_i+m_j-(1-\delta)}_\delta$. It follows that  
 $$\mathrm{det}(A(x,\xi)-\lambda)=\mathrm{det}\big(\wt{A}(x,\xi)+R(x,\xi)-\lambda\big).$$
By Theorem \ref{dn03.1} it is obvious that $\wt{A}$ is $\Lambda$-elliptic in the sense of Definition \ref{dn02.4}. 
By Corollary \ref{dn03.2.5} this is then also true for $\wt{A}+R$, hence for $A$. 
\end{proof}

\forget{
\begin{remark}
An alternative approach to construct the parametrix for a $\Lambda$-elliptic system is to 
first transform this system to diagonal form $($using Theorem {\rm\ref{dn03.1}}$)$, construct then the 
parametrix, and finally to transform back. However, in this passage one seems to loose some 
information, e.g. the difference in the estimates of diagonal and nondiagonal entries. 
Thus Theorem {\rm\ref{dn01.8}} seems to be the more accurate construction.
\end{remark}
}

%%%%%%%%%%%%%%%%%%%%%%%%%%%%%%%%%%%%%%%%%%%%%%%%%%%%%%%%%%%%%%%%%%%%%%%%%%%%%%%%%%%%%%%
%%%%%%%%%%%%%%%%%%%%%%%%%%%%%%%%%%%%%%%%%%%%%%%%%%%%%%%%%%%%%%%%%%%%%%%%%%%%%%%%%%%%%%%
\section{Bounded $H_\infty$-calculus for perturbed Douglis-Nirenberg systems}\label{sec:dn04}

Throughout this section, we let $A(x,D)$ be a $\Lambda$-elliptic Douglis-Nirenberg system, 
and we consider the unbounded operator 
\begin{equation}\label{dn04.C}
 \calA=A(x,D)+K:\,\calD\subset\calH\lra\calH
\end{equation}
where 
\begin{equation}\label{dn04.A}
 \calD=\mathop{\mbox{\Large$\oplus$}}_{j=1}^q H^{s+m_j}_p(\rz^n),\qquad
 \calH=\mathop{\mbox{\Large$\oplus$}}_{i=1}^q H^{s-l_i}_p(\rz^n)
\end{equation}
with arbitrary fixed $s\in\rz$ and $1<p<\infty$, and $K=(K_{ij})_{1\le i,j\le q}$ is 
a perturbation satisfying, for some $\eps>0$,  
\begin{equation}\label{dn04.B}
 K_{ij}:H^{s+m_j-\eps}(\rz^n)\lra H^{s-l_i}(\rz^n)\qquad\forall\;1\le i,j\le q. 
\end{equation}
We shall show that then $\calA$ generates an analytic semigroup and, even stronger, that it admits a 
bounded $H_\infty$-calculus. 

%%%%%%%%%%%%%%%%%%%%%%%%%%%%%%%%%%%%%%%%%%%%%%%%%%%%%%%%%%%%%%%%%%%%%%%%%%%%%%%%%%%%%%%
\subsection{Resolvent estimate}\label{sec:dn04.1}

From standard elliptic theory (i.e. the existence of a parametrix to $A(x,D)$ which inverts 
$A(x,D)$ up to smoothing remainders), it is straightforward 
to deduce that the operator $\calA$ from \eqref{dn04.C} is closed. 

\begin{theorem}\label{dn04.1}
Let $\calA$ be as in \eqref{dn04.C}. Then there exists an $\alpha_0\ge 0$ such that for each $\alpha\ge\alpha_0$ the 
resolvent of $\calA_\alpha:=\calA+\alpha$ exists on $\Lambda$ and satisfies 
\begin{equation}\label{dn02.D}
 \|(\calA_\alpha -\lambda)^{-1}\|_{\calL(\calH)}\le 
 C\,\spk{\lambda}^{-1}\qquad\forall\;\lambda\in\Lambda.
\end{equation}
Moreover, with the notation from Theorem {\rm\ref{dn01.8}},
\begin{equation}\label{dn02.E}
 (\calA_\alpha -\lambda)^{-1}=G(x,D;\lambda)+R(\lambda)\qquad\forall\;\lambda\in\Lambda
\end{equation}
with a remainder $R(\lambda)=\Big(R_{ij}(\lambda)\Big)_{1\le i,j\le q}$ satisfying, 
for some $\eps>0$,  
\begin{equation}\label{dn02.F}
 \|R(\lambda)\|_{\calL(\calH)}\le 
 C\,\spk{\lambda}^{-1-\eps}\qquad\forall\;\lambda\in\Lambda. 
\end{equation}
\end{theorem}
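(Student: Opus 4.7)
The plan is to invert $A(x,D)-\lambda$ by a Neumann series built around the parametrix of Theorem~\ref{dn01.8}, and then absorb the perturbation $K$ by a complex-interpolation estimate. By Lemma~\ref{dn02.7} we may assume, upon first replacing $A$ by $A+\alpha$ for $\alpha$ sufficiently large, that \eqref{dn01.D} holds with $R=0$, so that $G(x,D;\lambda)$ is available on all of $\Lambda$.

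\textbf{Two operator norms for $G(x,D;\lambda)$.} The central step is to read two different operator norms off the symbol estimates \eqref{dn01.J}--\eqref{dn01.K}. Regarding $g_{ii}$ as a symbol of order $-r_i$ and $g_{ij}$ $(i\ne j)$ as one of order $-m_i-l_j$, the relevant Fr\'echet seminorms are uniformly bounded in $\lambda\in\Lambda$, and Theorem~\ref{dn02.2} gives boundedness $G(x,D;\lambda):\calH\to\calD$ uniformly in $\lambda$. Regarding instead $g_{ii}$ as order $0$ and $g_{ij}$ as order $l_i-l_j$, the same estimates together with the trivial bound $\langle\xi\rangle^{r_j}/(\langle\xi\rangle^{r_j}+|\lambda|)\le 1$ majorize the corresponding seminorms by $C\sup_\xi(\langle\xi\rangle^{r_i}+|\lambda|)^{-1}\le C\langle\lambda\rangle^{-1}$, yielding
\[
 \|G(x,D;\lambda)\|_{\calL(\calH)}\le C\langle\lambda\rangle^{-1},\qquad \lambda\in\Lambda.
\]
By \eqref{dn01.N} the remainders satisfy $\|R^{(k)}(\lambda)\|_{\calL(\calH)}\le C_N\langle\lambda\rangle^{-N}$ for every $N$, so $1+R^{(0)}(\lambda)$ is invertible for $\alpha$ sufficiently large, and
\[
 (A(x,D)-\lambda)^{-1}=(1+R^{(0)}(\lambda))^{-1}G(x,D;\lambda)
\]
exists on $\Lambda$ with $\|(A-\lambda)^{-1}\|_{\calL(\calH)}\le C\langle\lambda\rangle^{-1}$; moreover it differs from $G(x,D;\lambda)$ by an operator of norm $O(\langle\lambda\rangle^{-N})$ in $\calL(\calH)$ for every $N$.

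\textbf{Perturbation step and remainder.} From the two bounds on $(A-\lambda)^{-1}$, complex interpolation of the direct-sum spaces gives
\[
 [\calH,\calD]_\theta=\bigoplus_{j=1}^q H^{s-l_j+\theta r_j}_p.
\]
Taking $\theta=1-\eps/r_1$ and setting $\eps_0:=\eps/r_1>0$ ensures the embedding $[\calH,\calD]_\theta\hookrightarrow \bigoplus_j H^{s+m_j-\eps}_p$ and, by interpolation of operators,
\[
 \|(A-\lambda)^{-1}\|_{\calL(\calH,\,\bigoplus_j H^{s+m_j-\eps}_p)}\le C\langle\lambda\rangle^{-\eps_0}.
\]
Combined with the hypothesis \eqref{dn04.B} on $K$, this yields $\|K(A-\lambda)^{-1}\|_{\calL(\calH)}\le C\langle\lambda\rangle^{-\eps_0}$, which is smaller than $1/2$ uniformly on $\Lambda$ provided $\alpha$ is large enough. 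Hence
\[
 (\calA_\alpha-\lambda)^{-1}=(A-\lambda)^{-1}\bigl(1+K(A-\lambda)^{-1}\bigr)^{-1}
\]
exists with $\|(\calA_\alpha-\lambda)^{-1}\|_{\calL(\calH)}\le C\langle\lambda\rangle^{-1}$, proving \eqref{dn02.D}. Inserting this into the resolvent identity
\[
 (\calA_\alpha-\lambda)^{-1}-(A-\lambda)^{-1}=-(\calA_\alpha-\lambda)^{-1}K(A-\lambda)^{-1}
\]
and estimating its three factors by $C\langle\lambda\rangle^{-1}$, a constant, and $C\langle\lambda\rangle^{-\eps_0}$ respectively gives a bound of order $\langle\lambda\rangle^{-1-\eps_0}$ for this difference; together with $(A-\lambda)^{-1}=G(x,D;\lambda)+O(\langle\lambda\rangle^{-N})$ from the previous paragraph, this yields \eqref{dn02.E}--\eqref{dn02.F} with $\eps_0$ in place of $\eps$.

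\textbf{Main obstacle.} The delicate point is the very first step: extracting the clean $\langle\lambda\rangle^{-1}$-decay of $\|G(x,D;\lambda)\|_{\calL(\calH)}$ from the mixed parameter-symbol bounds of Theorem~\ref{dn01.8}. This hinges on choosing, entry by entry, the correct order within the Douglis--Nirenberg hierarchy at which to apply Theorem~\ref{dn02.2}. Once that and the boundedness $G(x,D;\lambda):\calH\to\calD$ are in place, the interpolation and perturbation arguments are essentially routine.
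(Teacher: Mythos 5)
Your proposal is correct and follows the same overall architecture as the paper's proof: derive the two mapping properties of the parametrix $G(x,D;\lambda)$ — $O(\langle\lambda\rangle^{-1})$-decay in $\calL(\calH)$ and uniform boundedness in $\calL(\calH,\calD)$ — then absorb $K$ by factoring $\calA_\alpha-\lambda = (1+K(A-\lambda)^{-1})(A-\lambda)$ and showing $K(A-\lambda)^{-1}$ decays with a small positive power of $\langle\lambda\rangle^{-1}$. The one place where you take a slightly different route is the intermediate estimate used to absorb $K$: you interpolate the two endpoint operator bounds in the abstract, using $[\calH,\calD]_\theta=\bigoplus_j H^{s-l_j+\theta r_j}_p$ with $\theta=1-\eps/r_1$. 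The paper instead works entirely at the symbol level, observing directly from \eqref{dn01.J}--\eqref{dn01.K} the one-parameter family of estimates \eqref{dn02.G}
\[
 \|g_{ij}(x,D;\lambda)\|_{\calL\left(H^{\sigma-l_j}_p,\,H^{\sigma+m_i-(1-\tau)r_i}_p\right)}\le C\,\spk{\lambda}^{-1+\tau},\qquad 0\le\tau\le 1,
\]
and then simply picking $\tau$ with $(1-\tau)r_l\le\eps$ for all $l$. Both routes produce the same decay exponent $\eps/r_1$; the paper's is a bit more elementary since it avoids invoking complex interpolation of Bessel-potential scales and of finite direct sums, while yours offloads a small elementary inequality $(a+b)^{-1}\le a^{-\tau}b^{-(1-\tau)}$ to standard interpolation theory. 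One tiny caveat: your $\theta=1-\eps/r_1$ should be replaced by $\max(0,1-\eps/r_1)$, or equivalently one should first shrink $\eps$ to ensure $\eps<r_1$, so that $\theta\in[0,1)$; the paper's choice of $\tau$ handles this automatically.
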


Before we come to the proof, let us remark that in \eqref{dn02.E} the operator 
$G(x,D;\lambda)$ is constructed as in Section \ref{sec:dn03.2}, but with respect to the 
symbol $A(x,\xi)+\alpha$ (recall Remark \ref{dn02.7}). 
Moreover, the estimates in Theorem \ref{dn01.8} imply that 
 $$|\partial^\alpha_\xi\partial^\beta_x g_{ij}(x,\xi;\lambda)|\le C_{\alpha\beta\tau}
 \spk{\lambda}^{-1+\tau}\spk{\xi}^{-m_i-l_j-(1-\tau)r_i-|\alpha|+\delta|\beta|}
 \qquad\forall\;0\le\tau\le 1,$$
uniformly in $(x,\xi,\lambda)\in\rz^n\times\rz^n\times\Lambda$. Thus, by Theorem \ref{dn02.2}, 
\begin{equation}\label{dn02.G}
 \|g_{ij}(x,D;\lambda)\|_{\calL\left(H^{\sigma-l_j}(\rz^n),H^{\sigma+m_i-(1-\tau)r_i}(\rz^n)\right)}
 \le C_{\sigma,\tau}\,\spk{\lambda}^{-1+\tau}
 \qquad\forall\;0\le\tau\le 1\quad\forall\;\sigma\in\rz.
\end{equation}

\begin{proof}[Proof of Theorem \ref{dn04.1}]
Choose $\alpha_0$ so large that $A(x,D)+\alpha$ is $\Lambda$-elliptic and let 
$\wt\calA=\calA+\alpha-K$. 
Then Theorem \ref{dn01.8} implies that there exists a $c\ge 0$ such that 
 $$(\wt\calA-\lambda)^{-1}=\big(1+R^{(0)}(x,D;\lambda)\big)^{-1}G(x,D;\lambda)
   =G(x,D;\lambda)\big(1+R^{(1)}(x,D;\lambda)\big)^{-1}$$
for all $\lambda\in\Lambda$ with $|\lambda|\ge c$. 
Since $\|\big(1+R^{(k)}(x,D;\lambda)\big)^{-1}\|$ is uniformly 
bounded in $|\lambda|\ge c$, we derive from \eqref{dn02.G} with $\tau=0$ and $\sigma=s$ that  
 $$\|(\wt\calA-\lambda)^{-1}\|\le C\,\spk{\lambda}^{-1}\qquad\forall\;|\lambda|\ge c.$$
By definition of $\wt\calA$ we can write 
 $$\calA_\alpha-\lambda=\big(1+K(\wt\calA-\lambda)^{-1}\big)(\wt\calA-\lambda)
   =:\big(1+S(\lambda)\big)(\wt\calA-\lambda).$$
Using the above representation of the resolvent, 
\begin{align*}
 S(\lambda)&=KG(x,D;\lambda)\big(1+R^{(1)}(x,D;\lambda)\big)^{-1}.
\end{align*}
Now choose $0\le\tau<1$ such that $\tau\ge\frac{r_l-\eps}{r_l}$ whenever $r_l>0$, i.e. $\tau$ fulfills 
$(1-\tau)r_l\le\eps$ for all $1\le l\le q$. Then \eqref{dn02.G} with $\sigma=s$ together with assumption \eqref{dn04.B} yield that  
 $$\spk{\lambda}^{1-\tau}K_{il}g_{lj}(x,D;\lambda): 
   H^{s-l_j}_p(\rz^n)\lra H^{s-l_i}_p(\rz^n),\qquad 1\le i,j,l\le q,$$
is unifomly bounded in $\lambda\in\Lambda$. It follows that 
$\spk{\lambda}^{1-\tau}S(\lambda)\in\calL(\calH)$ is uniformly bounded in $\lambda\in\Lambda$. 
We conclude that the resolvent 
$(\calA_\alpha-\lambda)^{-1}$ exists for all $|\lambda|\ge c$ for a sufficiently large 
constant $c$. Replacing now $\alpha_0$ from the beginning of the proof by $\alpha_0+c$, 
the resolvent exists for all $\lambda\in\Lambda$. 

Representation \eqref{dn02.E} we derive by repeated use of the formula 
$(1+T)^{-1}=1-T+T(1+T)^{-1}T$. In case $K=0$ we apply this formula to $T=R^{(0)}$, and even 
obtain a remainder of decay $O(\spk{\lambda}^{-2})$ with respect to the operator-norm in 
$\calH$. Otherwise, we have 
 $$(\calA_\alpha-\lambda)^{-1}=G(x,D;\lambda)\big(1+R^{(1)}(x,D;\lambda)\big)^{-1}
   \big(1+S(\lambda)\big)^{-1}
   \equiv G\big(1+S(\lambda)\big)^{-1}+O(\spk{\lambda}^{-2}).$$
Furthermore, 
\begin{align*}
 G(x,D;\lambda)\big(1+S(\lambda)\big)^{-1}&=G(x,D;\lambda)+G(x,D;\lambda)
   \Big(-S(\lambda)+S(\lambda)\big(1+S(\lambda)\big)^{-1}S(\lambda)\Big)\\
 &\equiv G(x,D;\lambda)+O(\spk{\lambda}^{-2+\tau}).
\end{align*}
This finishes the proof. 
\end{proof}

\begin{corollary}\label{dn04.2}
Let $\Lambda=\Lambda(\theta)$ with $\theta<\frac{\pi}{2}$. Choosing $\alpha\ge0$ large enough, 
$\calA+\alpha$ is the infinitesimal generator of a bounded analytic semigroup on $\calH$. 
\end{corollary}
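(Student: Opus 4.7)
The plan is to apply the standard characterization of generators of bounded analytic semigroups on a Banach space: it suffices to show that $\calA_\alpha := \calA + \alpha$ is densely defined and closed on $\calH$, and that its resolvent set contains a sector around the negative real axis of half-opening angle strictly greater than $\pi/2$, on which $|\lambda|\cdot\|(\calA_\alpha - \lambda)^{-1}\|_{\calL(\calH)}$ is uniformly bounded.

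First, I would dispatch the structural hypotheses. Closedness of $\calA$, and thus of the lower-order perturbation $\calA_\alpha$, is noted at the start of Subsection \ref{sec:dn04.1}. Density of $\calD$ in $\calH$ follows componentwise from the continuous dense embedding $H^{s+m_j}_p(\rz^n) \hookrightarrow H^{s-l_j}_p(\rz^n)$, which holds because $r_j = l_j + m_j \ge 0$ by \eqref{dn02.B}.

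Second, I would translate the resolvent bound of Theorem \ref{dn04.1} into sectoriality. The hypothesis $\theta < \pi/2$ means that $\Lambda(\theta)$ is a closed sector around the negative real axis of half-opening $\pi - \theta > \pi/2$; in particular, it strictly contains the closed left half-plane. Using $|\lambda| \le \spk{\lambda}$, the estimate \eqref{dn02.D} upgrades to
\begin{equation*}
  |\lambda|\cdot\|(\calA_\alpha - \lambda)^{-1}\|_{\calL(\calH)} \le C\,\frac{|\lambda|}{\spk{\lambda}} \le C \qquad \forall\,\lambda \in \Lambda(\theta)\setminus\{0\},
\end{equation*}
which is exactly the required sectoriality estimate. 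Choosing $\alpha_0$ as in Theorem \ref{dn04.1} (possibly enlarged so that $0$ already lies in $\Lambda(\theta) \subset \rho(\calA_\alpha)$), the generation theorem then yields the bounded analytic semigroup on $\calH$.

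I do not expect a genuine obstacle: the analytic effort is entirely contained in Theorem \ref{dn04.1}, and the corollary is a repackaging of that resolvent estimate into semigroup language. The only substantive point to keep in mind is that $\theta < \pi/2$ is precisely what makes $\Lambda(\theta)$ strictly wider than the closed left half-plane, and hence what turns the resolvent bound into sectoriality of angle strictly below $\pi/2$, i.e., analyticity (rather than merely strong continuity) of the generated semigroup.
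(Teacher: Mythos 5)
Your proof is correct and is exactly the argument the paper leaves implicit: the corollary is stated without proof as an immediate consequence of Theorem \ref{dn04.1}, and your repackaging --- density and closedness from the structural setup, sectoriality of half-opening $\pi-\theta>\pi/2$ from \eqref{dn02.D} via $|\lambda|\le\spk{\lambda}$ --- is the intended and essentially only route. No gap.
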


%%%%%%%%%%%%%%%%%%%%%%%%%%%%%%%%%%%%%%%%%%%%%%%%%%%%%%%%%%%%%%%%%%%%%%%%%%%%%%%%%%%%%%%
\subsection{Short review of the $H_\infty$-calculus}\label{sec:dn04.2}

Let us recall some basic facts about the $H_\infty$-calculus for a closed, densely defined 
operator 
 $$A:\calD(A)\subset X\longrightarrow X$$
in a Banach space $X$. This calculus was originally introduced by McIntosh \cite{Mcin}. 
We refer to \cite{KuWe} for a detailed presentation. 
Given $0<\theta<\pi$, let $\Lambda$ be as in \eqref{dn01.C} and 
$\partial\Lambda=\partial\Lambda(\theta)$ its parameterized boundary. 
Assume that 
\begin{itemize}
 \item $\Lambda\setminus\{0\}$ is contained in the resolvent set of $A$,
 \item $\|\lambda(\lambda-A)^{-1}\|_{\calL(X)}$ is uniformly bounded in 
  $0\not=\lambda\in\Lambda$,
 \item $A$ is injective with dense range. 
\end{itemize}

We let $H_\infty=H_\infty(\theta)$ denote the space of all functions 
$f:\cz\setminus\Lambda\to\cz$ which are 
holomorphic and bounded, equipped with the supremum norm. The subspace $H=H(\theta)$ 
consists of all functions which additionally satisfy, for some $s>0$, 
 $$\sup_{\lambda\in \cz\setminus\Lambda}(|\lambda|^{-s}+|\lambda|^s)
   |f(\lambda)|<\infty.$$
This subspace is dense in $H_\infty$ in the following sense: Given $f\in H_\infty$, 
there exists a sequence $(f_j)_{j\in\nz}\subset H$ such 
that $f_j\to f$ locally uniformly on compact subsets of $\cz\setminus\Lambda$, and 
$\|f_j\|_\infty\le c\,\|f\|_\infty$ for some constant $c$ which is independent of 
$j\in\nz$. 
Moreover, each $f\in H_\infty$ possesses $($non-tangential$)$ boundary values that define  
$f|_{\partial\Lambda}\in L_\infty(\partial\Lambda)$.

Because of the decay property, for every $f\in H$ the integral 
 \begin{equation}\label{ha}
   f(A):=\frac{1}{2\pi i}\int_{\partial\Lambda} f(\lambda)(\lambda-A)^{-1}\,d\lambda
 \end{equation}
converges absolutely in the $\calL(X)$-norm and thus defines an operator $f(A)\in\calL(X)$. 
By approximation, the definition of $f(A)$ can be extended to all $f\in H_\infty$: 
If $(f_j)_{j\in\nz}\subset H$ is an approximating sequence as described above, the limit 
 $$f(A)x=\lim_{j\to\infty}f_j(A)x$$
exists for all $x\in\calD(A)$ and does not depend on the specific choice of the sequence. 
The resulting operator $f(A):\calD(A)\subset X\to X$ is closable. 
Its closure will be denoted again by $f(A)$. 

\begin{definition}\label{dn04.3}
The operator $A$ admits a {\em bounded $H_\infty$-calculus} for the sector 
$\cz\setminus\Lambda$ if $f(A)\in\calL(X)$ for any $f\in H$ and, 
with some constant $M\ge0$, 
\begin{equation}\label{hA}
 \|f(A)\|_{\calL(X)}\le M\,\|f\|_\infty \qquad\forall\;f\in H.
\end{equation}
\end{definition}

If $A$ admits a bounded $H_\infty$-calculus with respect to $\cz\setminus\Lambda$ then, 
due to Banach-Steinhaus theorem, the estimate \eqref{hA} extends to all $f\in H_\infty$. 

We finish this summary with a simple observation of which we shall make use in the next section. 

\begin{remark}\label{dn04.4}
The $H_\infty$-calculus is invariant under conjugation with isomorphisms, i.e. if 
$V\in\calL(X)$ is an isomorphism, then $A$ admits a bounded $H_\infty$-calculus with respect 
to $\cz\setminus\Lambda$ if and only if $B:=V^{-1}A{V}$ with $\calD(B)=V^{-1}(\calD(A))$ does. 
In this case, 
 $$f(B)=V^{-1}f(A)\,V\qquad\forall\;f\in H_\infty.$$ 
\end{remark}

\forget{
\begin{proof}
Since $\wt{V}^{-1}=V^{-1}|_{\calD(A)}$, it is obvious that 
$(\lambda-B)=V^{-1}(\lambda-A)V:\calD(A)\to X$ is bijective with inverse 
 $$(\lambda-B)^{-1}=\wt{V}^{-1}(\lambda-A)^{-1}V=V^{-1}(\lambda-A)V.$$
Since $\|\cdot\|_X$ and $\|V\cdot\|_X$ are equivalent norms on $X$, this immediately 
shows the resolvent estimate. The formula for $f(B)$ follows by inserting the resolvent 
expression into the above Dunford integral, followed by pulling out $V$ and $V^{-1}$ from 
the integral (which is justified by the absolute convergence of the integral). 
\end{proof}
}

%%%%%%%%%%%%%%%%%%%%%%%%%%%%%%%%%%%%%%%%%%%%%%%%%%%%%%%%%%%%%%%%%%%%%%%%%%%%%%%%%%%%%%%
\subsection{Douglis-Nirenberg systems}\label{sec:dn04.3}

We shall improve Corollary \ref{dn04.2}: 

\begin{theorem}\label{dn04.5}
Let $\calA$ be as in \eqref{dn04.C}. Then there exists an 
$\alpha_0\ge 0$ such that for each $\alpha\ge\alpha_0$ the operator 
$\calA_\alpha=\calA+\alpha$ admits a bounded $H_\infty$-calculus with respect to $\Lambda$ in $\calH$. 
\end{theorem}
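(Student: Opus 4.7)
The plan is to reduce to the scalar case via Theorem~\ref{dn03.1} and Remark~\ref{dn04.4}, and then exploit the resolvent information already collected in Theorem~\ref{dn04.1}. Conjugating $\calA_\alpha$ by the isomorphism $V(x,D)$ from Theorem~\ref{dn03.1} gives
\begin{equation*}
  V(x,D)^{-1}\calA_\alpha V(x,D)=\wt A(x,D)+\alpha+\wt K,
\end{equation*}
where $\wt A=\mathrm{diag}(\wt a_{11},\ldots,\wt a_{qq})$ is diagonal with $\Lambda$-elliptic scalar entries $\wt a_{ii}\in S^{r_i}_\delta$, and $\wt K:=V^{-1}R^{(-\infty)}V+V^{-1}KV$ is a perturbation of the same lower-order type as \eqref{dn04.B} (the smoothing contribution is trivially so, and the $V$-conjugation preserves the off-diagonal orders by the intersection property $v_{ij}\in S^{-m_i+m_j}_\delta\cap S^{l_i-l_j}_\delta$). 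By Remark~\ref{dn04.4} it then suffices to establish a bounded $H_\infty$-calculus for the diagonal operator $\wt A+\alpha+\wt K$, and this reduces further, componentwise, to the scalar problem for $\wt a_{ii}(x,D)+\alpha+\wt K_{ii}$ on the single Sobolev space $H^{s-l_i}_p(\rz^n)$.

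For the scalar operator I would use the Dunford integral \eqref{ha} for $f\in H(\theta)$ and decompose the resolvent along \eqref{dn02.E} as $G(x,D;\lambda)+R(\lambda)$. The $R(\lambda)$-part is immediate: by \eqref{dn02.F} one has $\|R(\lambda)\|\le C\spk{\lambda}^{-1-\eps}$, and since $\int_{\partial\Lambda}\spk{\lambda}^{-1-\eps}|d\lambda|<\infty$, the corresponding piece of $f(\calA_\alpha)$ is bounded by $C\|f\|_\infty$. The task is therefore to control
\begin{equation*}
  M(f):=\frac{1}{2\pi i}\int_{\partial\Lambda} f(\lambda)\,G(x,D;\lambda)\,d\lambda
\end{equation*}
in $\calL(\calH)$ uniformly by $\|f\|_\infty$, and then to extend from $H$ to $H_\infty$ by the density procedure described in Subsection~\ref{sec:dn04.2}.

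The approach to $M(f)$ is to interpret its symbol, pointwise in $(x,\xi)$, as $f(\wt a_{ii}(x,\xi)+\alpha)$ defined via the scalar holomorphic functional calculus: the $\Lambda$-ellipticity of $\wt a_{ii}$ together with the choice of $\alpha$ keeps the spectrum (one point for each $(x,\xi)$) uniformly bounded away from $\partial\Lambda$. One must then verify that this symbol lies in $S^0_\delta(\rz^n\times\rz^n)$ with seminorms controlled by $\|f\|_\infty$, i.e.\ that
\begin{equation*}
  |\partial_\xi^\alpha\partial_x^\beta f(\wt a_{ii}(x,\xi)+\alpha)|\le C_{\alpha\beta}\,\|f\|_\infty\,\spk{\xi}^{-|\alpha|+\delta|\beta|}.
\end{equation*}
The derivatives are taken under the Dunford integral, producing chains of scalar resolvents times derivatives of $\wt a_{ii}$; the estimates of Lemma~\ref{dn03.2} and Proposition~\ref{dn01.7} for the factors $(\spk{\xi}^{r_i}+|\lambda|)^{-1}$, balanced against $\int_{\partial\Lambda}|d\lambda|$ split near and far from the spectral value $\wt a_{ii}(x,\xi)+\alpha$, yield exactly the required $\spk{\xi}^{-|\alpha|+\delta|\beta|}$ factors with a constant depending only on $\|f\|_\infty$. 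Theorem~\ref{dn02.2} then gives $\|M(f)\|_{\calL(\calH)}\le C\|f\|_\infty$. The main obstacle is precisely this last symbol-level step: securing $S^0_\delta$-estimates with $\|f\|_\infty$-only dependence. This is also what forces the initial diagonalization, since on the off-diagonal the entries of $G$ mix two distinct weights $(\spk{\xi}^{r_i}+|\lambda|)^{-1}$ and $(\spk{\xi}^{r_j}+|\lambda|)^{-1}$ (cf.\ \eqref{dn01.K}) and do not fit the classical scalar-order pattern, whereas after diagonalization each $\wt a_{ii}$ is a genuine elliptic scalar symbol of order $r_i$ and the standard calculus applies.
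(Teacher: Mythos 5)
Your plan mirrors the paper's own route: diagonalize via Theorem~\ref{dn03.1} and Remark~\ref{dn04.4}, split the resolvent as $(\calA_\alpha-\lambda)^{-1}=G(x,D;\lambda)+R(\lambda)$ using Theorem~\ref{dn04.1}, handle $R(\lambda)$ by its $\spk{\lambda}^{-1-\eps}$ decay, and bound the remaining contour integral at the symbol level. That is the correct skeleton, and the reduction to diagonal $\wt A$ with a perturbation $\wt K$ of the type \eqref{dn04.B} is fine. There are, however, two points where the proposal glosses over the part that actually carries the load.

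First, a small but misleading step: you cannot ``reduce further, componentwise, to the scalar problem for $\wt a_{ii}(x,D)+\alpha+\wt K_{ii}$.'' The conjugated perturbation $\wt K=V^{-1}R^{(-\infty)}V+V^{-1}KV$ is \emph{not} diagonal, so no genuinely scalar operator sits in front of you after diagonalization. What works — and is in fact what the rest of your argument silently does — is to keep the matrix picture, put all of $\wt K$'s effect into the norm-integrable remainder $R(\lambda)$ of \eqref{dn02.E}, and only treat the \emph{diagonal} symbol part $G=\mathrm{diag}(g_{11},\dots,g_{qq})$ componentwise.

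Second, and more importantly, the key estimate for $M(f)$ is not secured by what you wrote. The symbol of $M(f)$ is $\frac{1}{2\pi i}\int_{\partial\Lambda}f(\lambda)g_{ii}(x,\xi;\lambda)\,d\lambda$, which is \emph{not} $f(\wt a_{ii}(x,\xi)+\alpha)$: $g_{ii}$ is the parametrix symbol and differs from the pointwise resolvent $g_{ii}^{(0)}=(\wt a_{ii}+\alpha-\lambda)^{-1}$ by the asymptotic corrections $g_{ii}^{(\nu)}$, $\nu\ge1$. You must split $g_{ii}=g_{ii}^{(0)}+\wt g_{ii}$ and treat the two pieces differently. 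For $\wt g_{ii}$, the improved estimate \eqref{dn01.L} with $i=j$ gives a bound $\sim(\spk{\xi}^{r_i}+|\lambda|)^{-2}\spk{\xi}^{r_i-(1-\delta)}$, and only \emph{then} does $\int_{\partial\Lambda}\|f\|_\infty(\spk{\xi}^{r_i}+|\lambda|)^{-2}|d\lambda|\lesssim\|f\|_\infty\spk{\xi}^{-r_i}$ converge and give a symbol of negative order. For the leading piece $g_{ii}^{(0)}$, your ``split near and far'' of $\int_{\partial\Lambda}|d\lambda|$ does \emph{not} yield a $\|f\|_\infty$-only bound: with only $|f|\le\|f\|_\infty$ and the single factor $(\spk{\xi}^{r_i}+|\lambda|)^{-1}$, the integral over $\partial\Lambda$ diverges logarithmically at infinity. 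What is needed is a genuine contour deformation (Cauchy's theorem), moving $\partial\Lambda$ to the ``pac-man'' contour $\scrC(\xi)$ of radius $\sim\spk{\xi}^{r_i}$ around the spectral value $\wt a_{ii}(x,\xi)+\alpha$; the circular arc and the truncated rays then each contribute $O(\|f\|_\infty)$, and one obtains $\{f(\wt a_{ii}(\cdot,\cdot)+\alpha)/\|f\|_\infty : 0\ne f\in H\}$ bounded in $S^0_\delta$. Without this deformation step (or an equivalent substitute such as integration by parts exploiting holomorphy), the argument does not close.
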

\begin{proof}
According to Theorem \ref{dn03.1} and Lemma \ref{dn04.3} we may assume that $A(x,D)$ is 
a system of diagonal form. Replacing from the very beginning $A(x,D)$ by $A(x,D)+\alpha$ for 
$\alpha\ge\alpha_0$ as in Theorem \ref{dn04.1}, we may assume $\alpha=0$ and obtain that 
 $$(\calA-\lambda)^{-1}=\mathrm{diag}
   \Big({g}_{11}(x,D;\lambda),\ldots,{g}_{qq}(x,D;\lambda)\Big)+R(\lambda),$$
where $\spk{\lambda}^{1+\eps}R(\lambda)\in\calL(\calH)$ is uniformly bounded in 
$\lambda\in\Lambda$ for some $\eps>0$, and $g_{ii}(x,D;\lambda)$ is the parametrix to 
${a}_{ii}(x,D)-\lambda$ in the sense of Theorem \ref{dn01.8} (in the special case of $q=1$). 
We now insert this representation in the Dunford integral \eqref{ha}, obtaining two summands, namely 
 $$R(f):=\frac{1}{2\pi i}\int_{\partial\Lambda}f(\lambda)R(\lambda)\,d\lambda$$
and a diagonal matrix $G(f)$ with entries 
 $$G_{ii}(f)=\frac{1}{2\pi i}\int_{\partial\Lambda}f(\lambda)g_{ii}(x,D;\lambda)\,d\lambda,
   \qquad i=1,\ldots,q.$$
Since $R(\lambda)$ is an integrable function with values in $\calL(\calH)$, it is obvious 
that we can estimate 
 $$\|R(f)\|_{\calL(\calH)}\le M\,\|f\|_\infty\qquad\forall\;f\in H$$
with a constant $M$ independent of $f$. To show the analogous estimate for $G(f)$ we have to 
verify that
 $$\|G_{ii}(f)\|_{\calL(H^{s-l_i}(\rz^n))}\le M\,\|f\|_\infty\qquad\forall\;f\in H$$
for any $1\le i\le q$. This has been done already in the proof of Theorem 3.11 of \cite{EsSe}. 
For convenience of the reader, we shortly sketch the argument: Write 
 $$\wt{g}_{ii}(x,\xi;\lambda):=g_{ii}(x,\xi;\lambda)-g_{ii}^{(0)}(x,\xi;\lambda)$$
(cf.\ Theorem \ref{dn01.8}). Correspondingly, $G_{ii}(f)=G_{ii}^{(0)}(f)+\wt{G}_{ii}(f)$ 
with obvious meaning of notation. Then 
$G_{ii}^{(0)}(f)=a_f^{(0)}(x,D)$ and $\wt{G}_{ii}(f)=\wt{a}_f^{(0)}(x,D)$ with 
\begin{align*}
 a_f^{(0)}(x,\xi)
   &=\frac{1}{2\pi i}\int_{\scrC(\xi)}f(\lambda)(a_{ii}(x,\xi)-\lambda)^{-1}\,d\lambda,\\
 \wt{a}_f^{(0)}(x,\xi)
   &=\frac{1}{2\pi i}\int_{\partial\Lambda}f(\lambda)\wt{g}_{ii}(x,\xi;\lambda)\,d\lambda.
\end{align*}
Here, $\scrC(\xi)$ is a path of ``pac-man shape" consisting of the circular part 
$\{\lambda\in\cz\setminus\Lambda\st |\lambda|=c\spk{\xi}^{r_i}\}$ and the two straight lines 
$\{\lambda\in\partial\Lambda\st |\lambda|\le c\spk{\xi}^{r_i}\}$, where  
$c=2\|a_{ii}\|^{r_i}_{\delta,0}$ (cf. the notation given after Definition \ref{dn02.1}).
Then it is straightforward to see that 
$\big\{a_f^{(0)}\big/\|f\|_\infty\st 0\not=f\in H\big\}$ is a bounded 
subset of $S^0_\delta$ and, using the estimate \eqref{dn01.L} for $i=j$, that 
$\big\{\wt{a}_f\big/\|f\|_\infty\st 0\not=f\in H\big\}$ is a bounded subset of $S^{1-\delta}_\delta$. 
In particular, the sets of associated pseudodifferential operators are bounded subsets of 
$\calL(H^t(\rz^n))$ for any choice of $t\in\rz$. 
\end{proof}

Let us mention that if $K=0$ in \eqref{dn04.C}, the previous proof shows that 
$f(\calA)$, $f\in H$, is a Douglis-Nirenberg system, i.e.
 $$f(\calA)=\Big(a^f_{ij}(x,D)\Big)_{1\le i,j\le q},\qquad 
   a^f_{ij}(x,\xi)\in S^{l_i-l_j}_\delta,$$
and, for suitable constants $C_k\ge 0$, 
 $$\|a^f_{ij}\|^{l_i-l_j}_{\delta,k}\le C_k\|f\|_\infty
   \qquad\forall\;f\in H\quad\forall\;k\in\nz_0.$$

%%%%%%%%%%%%%%%%%%%%%%%%%%%%%%%%%%%%%%%%%%%%%%%%%%%%%%%%%%%%%%%%%%%%%%%%%%%%%%%%%%%%%%%
%%%%%%%%%%%%%%%%%%%%%%%%%%%%%%%%%%%%%%%%%%%%%%%%%%%%%%%%%%%%%%%%%%%%%%%%%%%%%%%%%%%%%%%
\section{Systems with H\"older continuous coefficients}\label{sec:dn05}

The aim of this section is to show that Douglis-Nirenberg systems with only H\"older continuous 
coefficients (in a sense made precise below) can be treated as perturbations of smooth 
Douglis-Nirenberg systems in the sense of Section \ref{sec:dn04}. 

Let us introduce the scale of H\"older-Zygmund spaces 
 $$\calC^s_*(\rz^n):=B^s_{\infty,\infty}(\rz^n),\qquad s>0,$$
where $B^s_{p,q}(\rz^n)$ denotes the standard Besov-spaces on $\rz^n$. As before, we shall 
write also shortly $\calC^s_*$. If $s=k+r$ with $k\in\nz_0$ and $0<r<1$, then $\calC^s_*$ 
coincides with the well-known H\"older space of functions $u$ that have bounded derivatives 
$\partial^\alpha u$, $|\alpha|\le k$, and 
 $$\sup_{x\not=y}\frac{|\partial^\alpha u(x)-\partial^\alpha u(y)|}{|x-y|^r}<\infty.$$

\begin{definition}\label{dn05.1}
Let $t>0$, $\mu\in\rz$, and $0\le\delta<1$. Then 
$\calC^t_* S^\mu_{\delta}(\rz^n\times\rz^n)$ denotes the space of all functions 
$a:\rz^n\times\rz^n\to\cz$ such that, for any $\alpha\in\nz_0^n$, 
 $$|\partial^\alpha_\xi a(x,\xi)|\le C_\alpha\,\spk{\xi}^{\mu-|\alpha|}
   \qquad \forall\;x,\xi\in\rz^n$$
and 
 $$\|\partial^\alpha_\xi a(\cdot,\xi)\|_{\calC^t_*(\rz^n_x)}\le 
   C_\alpha\,\spk{\xi}^{\mu+r\delta-|\alpha|}
   \qquad \forall\;\xi\in\rz^n$$
with some constants $C_\alpha\ge 0$. Again, we write for short $\calC^t_* S^\mu_{\delta}$. 
\end{definition}

For a detailed presentation of properties of associated pseudodifferential operators we 
refer the reader to \cite{Tayl}. Let us only mention the following two results: 

\begin{proposition}\label{dn05.2}
If $a\in \calC^t_* S^\mu_{\delta}$ then, continuously, 
 $$a(x,D):H^{s+\mu}_{p}(\rz^n)\longrightarrow H^s_{p}(\rz^n)\qquad 
   \forall\;-(1-\delta)t<s<t\quad\forall\;1<p<\infty.$$
\end{proposition}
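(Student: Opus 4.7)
I would prove this via Bony's symbol smoothing, reducing to the smooth-symbol case already covered by Theorem \ref{dn02.2}. Fix an auxiliary parameter $\gamma\in(\delta,1)$, a Littlewood--Paley partition of unity $\{\varphi_j(\xi)\}_{j\ge 0}$ subordinate to dyadic annuli in $\xi$, and a Friedrichs-type mollifier $J_\epsilon=\phi(\epsilon D_x)$ acting in the $x$-variable only. With $\epsilon_j:=2^{-j\gamma}$, decompose
$$a(x,\xi)=a^\#(x,\xi)+a^b(x,\xi),\qquad a^\#(x,\xi):=\sum_{j\ge 0}(J_{\epsilon_j}a)(x,\xi)\,\varphi_j(\xi).$$

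First I would verify that the smoothed symbol $a^\#$ lies in the standard H\"ormander class $S^\mu_\gamma$. The key ingredients are the Bernstein-type bounds $\|\partial_x^\beta J_\epsilon u\|_{L^\infty}\le C\,\epsilon^{t-|\beta|}\,\|u\|_{\calC^t_*}$ together with the $\xi$-estimates from Definition \ref{dn05.1}; the frequency cutoff $\varphi_j$ pays exactly the right power of $\spk{\xi}^{\gamma}$ per $x$-derivative to land in $S^\mu_\gamma$. Since $\gamma<1$, Theorem \ref{dn02.2} then yields $a^\#(x,D):H^{s+\mu}_p\to H^s_p$ continuously for every $s\in\rz$ and every $1<p<\infty$.

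Next I would treat the remainder $a^b=\sum_j(a-J_{\epsilon_j}a)(x,\xi)\varphi_j(\xi)$. The elementary estimate $\|a(\cdot,\xi)-J_\epsilon a(\cdot,\xi)\|_{L^\infty}\le C\,\epsilon^t\,\|a(\cdot,\xi)\|_{\calC^t_*}$ combined with Definition \ref{dn05.1} places $a^b$ in a symbol class whose $\xi$-order is lowered to $\mu-(\gamma-\delta)t$, at the price of only $\calC^t_*$-regularity in $x$. To bound $a^b(x,D)u$ I would expand dyadically in both $x$ and $\xi$, splitting the action into a paraproduct piece (where the $x$-frequency of the symbol is much smaller than the $\xi$-frequency of $u$) and a resonant remainder piece. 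The Littlewood--Paley characterizations of $H^s_p$ and of $\calC^t_*$, together with Meyer's multiplier theorem, then produce boundedness $a^b(x,D):H^{s+\mu}_p\to H^s_p$ precisely when $s<t$ (dictated by the H\"older regularity available in the coefficients) and $s>-(1-\delta)t$ (which comes out by a duality argument, since the formal adjoint of such a piece lives in a symmetric symbol class and constrains the range on the other side).

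The main obstacle is this last step: controlling $a^b(x,D)$. The symbol-smoothing bookkeeping for $a^\#$ is essentially automatic once the mollifier scale is set, but the low-regularity remainder requires a careful paraproduct decomposition and dyadic summation, and this is exactly where the admissible range of $s$ gets pinned down. Because this machinery is standard in non-smooth pseudodifferential calculus, at this point I would invoke rather than reproduce the detailed estimates, citing the presentation in \cite{Tayl} to which the excerpt already points.
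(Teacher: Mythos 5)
The paper does not prove Proposition \ref{dn05.2} at all: it is stated as a known result with the remark that the reader should consult Taylor's monograph \cite{Tayl} for a detailed presentation, and no argument is given. Your outline correctly reconstructs the symbol-smoothing strategy that underlies Taylor's proof — split $a=a^{\#}+a^{b}$ at a dyadic mollification scale keyed to an auxiliary $\gamma\in(\delta,1)$, dispose of the smooth piece $a^{\#}\in S^{\mu}_{\gamma}$ by the smooth calculus (Theorem \ref{dn02.2}), and attack the low-regularity but lower-order remainder $a^{b}\in\calC^{t}_{*}S^{\mu-t(\gamma-\delta)}_{\gamma}$ by a paraproduct and Littlewood--Paley analysis. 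However, at precisely the step that determines the admissible range $-(1-\delta)t<s<t$ — the dyadic summation for $a^{b}(x,D)$ — you also fall back on \cite{Tayl}. So in substance both your proposal and the paper make the same move, namely a citation; what your write-up adds is a faithful record of the shape of the cited argument rather than an independent proof. One small imprecision worth flagging: the Bernstein-type bound $\|\partial_{x}^{\beta}J_{\epsilon}u\|_{L^{\infty}}\le C\,\epsilon^{\,t-|\beta|}\|u\|_{\calC^{t}_{*}}$ is valid (and needed) only for $|\beta|\ge t$; as written, for $|\beta|<t$ it would force $J_{\epsilon}u\to 0$ as $\epsilon\to 0$. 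For low $|\beta|$ one simply uses the uniform boundedness of $J_{\epsilon}$, and the conclusion $a^{\#}\in S^{\mu}_{\gamma}$ is unaffected.
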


\begin{proposition}[Symbol smoothing]\label{dn05.3}
Let $a\in\calC^t_* S^\mu_{\delta}$ and $\delta<\gamma<1$. Then there exists an  
$a^\gamma\in S^\mu_{\gamma}$ such that   
 $$r^\gamma:=a-a^\gamma\in\calC^t_*S^{\mu-r(\gamma-\delta)}_{\gamma}.$$  
\end{proposition}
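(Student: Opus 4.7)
I would use the standard Kumano-go--Nagase symbol-smoothing device, which is the tool developed in Taylor \cite{Tayl} for exactly this purpose: one mollifies $a$ in the $x$-variable via a Fourier multiplier whose scale is coupled to $\spk{\xi}$. Fix once and for all a $\phi\in C^\infty_c(\rz^n)$ with $\phi\equiv 1$ near the origin, and set
\begin{equation*}
 a^\gamma(x,\xi):=\bigl[\phi\bigl(\spk{\xi}^{-\gamma}D_x\bigr)\,a(\cdot,\xi)\bigr](x),\qquad
 r^\gamma(x,\xi):=a(x,\xi)-a^\gamma(x,\xi).
\end{equation*}
Here $\phi(\spk{\xi}^{-\gamma}D_x)$ is understood as the Fourier multiplier in $x$ with symbol $\phi(\spk{\xi}^{-\gamma}\eta)$, so that on the $x$-Fourier side the cutoff restricts to frequencies $|\eta|\lesssim\spk{\xi}^{\gamma}$, while the complementary multiplier $(1-\phi)(\spk{\xi}^{-\gamma}D_x)$ defining $r^\gamma$ projects onto $|\eta|\gtrsim\spk{\xi}^\gamma$.

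\textbf{Step 1: $a^\gamma\in S^\mu_\gamma$.} Each $D^\beta_x$ applied to $a^\gamma$ corresponds, on the $x$-Fourier side, to multiplication by $\eta^\beta$, which on $\mathrm{supp}\,\phi(\spk{\xi}^{-\gamma}\cdot)$ is of size $\spk{\xi}^{\gamma|\beta|}$; this accounts for the $\spk{\xi}^{\gamma|\beta|}$ growth permitted in $S^\mu_\gamma$. Derivatives in $\xi$ either fall on $a(\cdot,\xi)$, contributing the standard gain $\spk{\xi}^{-1}$ per derivative from the hypothesis $a\in\calC^t_*S^\mu_\delta$, or on $\phi(\spk{\xi}^{-\gamma}\eta)$, where $\partial_\xi(\spk{\xi}^{-\gamma}\eta)$ restricted to the support of $\phi'$ again has size $\spk{\xi}^{-1}$. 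Combining these, and using Bernstein/Mikhlin bounds for $\phi(\spk{\xi}^{-\gamma}D_x)$ uniformly in the scale $\spk{\xi}^{-\gamma}$, yields
\begin{equation*}
 |D^\alpha_\xi D^\beta_x a^\gamma(x,\xi)|\le C_{\alpha\beta}\,\spk{\xi}^{\mu+\gamma|\beta|-|\alpha|}.
\end{equation*}

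\textbf{Step 2: $r^\gamma\in\calC^t_*S^{\mu-t(\gamma-\delta)}_\gamma$.} The essential input is the classical pointwise estimate for a high-pass Fourier cutoff acting on a Zygmund-class function,
\begin{equation*}
 \|(1-\phi)(\eps D)f\|_{L^\infty}\le C\,\eps^t\,\|f\|_{\calC^t_*}\qquad(0<\eps\le 1),
\end{equation*}
which follows from a dyadic Littlewood-Paley decomposition. Applied to $\partial^\alpha_\xi a(\cdot,\xi)$, whose $\calC^t_*$-norm is bounded by $C_\alpha\spk{\xi}^{\mu+t\delta-|\alpha|}$ by assumption, with $\eps=\spk{\xi}^{-\gamma}\le 1$, this gives
\begin{equation*}
 \|\partial^\alpha_\xi r^\gamma(\cdot,\xi)\|_{L^\infty}\le C_\alpha\,\spk{\xi}^{\mu-t(\gamma-\delta)-|\alpha|},
\end{equation*}
which is the first condition in Definition \ref{dn05.1} for the class $\calC^t_*S^{\mu-t(\gamma-\delta)}_\gamma$. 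For the H\"older norm, observe that $(1-\phi)(\spk{\xi}^{-\gamma}D_x)$ is bounded on $\calC^t_*(\rz^n_x)$ uniformly in its scale parameter (again by Mikhlin on Besov spaces), hence
\begin{equation*}
 \|\partial^\alpha_\xi r^\gamma(\cdot,\xi)\|_{\calC^t_*}\le C_\alpha\,\spk{\xi}^{\mu+t\delta-|\alpha|}
 =C_\alpha\,\spk{\xi}^{(\mu-t(\gamma-\delta))+t\gamma-|\alpha|},
\end{equation*}
which is exactly the second condition for the class $\calC^t_*S^{\mu-t(\gamma-\delta)}_\gamma$.

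\textbf{Main obstacle.} The substantive technical point is the \emph{uniform-in-scale} boundedness of the Fourier multipliers $\phi(\spk{\xi}^{-\gamma}D_x)$ and $(1-\phi)(\spk{\xi}^{-\gamma}D_x)$ on $L^\infty$ and on $\calC^t_*$, together with the clean $\eps^t$ gain from cutting off a $\calC^t_*$ function at high frequency; this is what makes the scale $\eps=\spk{\xi}^{-\gamma}$ produce exactly the decay exponent $t(\gamma-\delta)$ once combined with the permitted Hölder growth $\spk{\xi}^{t\delta}$ of $a$ in $x$. Everything else is careful bookkeeping with the product and chain rules when $\xi$-derivatives are distributed across the multiplier and the symbol.
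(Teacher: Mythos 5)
Your argument is the standard Kumano-go--Nagase symbol-smoothing device as presented in Taylor's \emph{Tools for PDE}, which is exactly the reference the paper cites for Proposition~\ref{dn05.3} without reproducing the proof; so the approach coincides with the paper's intended one. One small remark: the paper's statement has a typo ($r$ in the exponent should read $t$, the Zygmund index), and you have correctly, if silently, made that substitution throughout.
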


The previous proposition says that symbols with H\"older continuous coefficients can be 
approximated by standard smooth symbols, modulo a remainder of more negative order. 

\begin{theorem}\label{dn05.4}
Let $s\in\rz$ be fixed. Let 
$A(x,D)=\Big(a_{ij}(x,D)\Big)_{1\le i,j\le q}$ be a Douglis-Nirenberg system with 
$a_{ij}\in\calC^{t_i}_* S^{l_i+m_j}_{\delta}$ such that each $t_i$ is positive and 
\begin{equation}\label{dn05.A}
 -(1-\delta)t_i<s-l_i<t_i\qquad\forall\;1\le i\le q.
\end{equation}
Moreover, assume that $A(x,D)$ is $\Lambda$-elliptic in the sense 
of Definition {\rm\ref{dn02.4}} or {\rm\ref{dn02.5}}. Then there exists an $\alpha_0\ge 0$ 
such that 
 $$A(x,D)+\alpha:
   \mathop{\mbox{\Large$\oplus$}}_{j=1}^q H^{s+m_j}_p(\rz^n)\subset
   \mathop{\mbox{\Large$\oplus$}}_{i=1}^q H^{s-l_i}_p(\rz^n)\lra
   \mathop{\mbox{\Large$\oplus$}}_{i=1}^q H^{s-l_i}_p(\rz^n)$$
admits a bounded $H_\infty$-calculus for any $\alpha\ge\alpha_0$. 
\end{theorem}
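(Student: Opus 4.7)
The plan is to reduce the theorem to Theorem \ref{dn04.5} by decomposing $A(x,D)$ into a smooth $\Lambda$-elliptic principal part plus a perturbation of negative order. Specifically, I apply the symbol smoothing result Proposition \ref{dn05.3} entry-wise: for a single parameter $\gamma\in(\delta,1)$ still to be chosen, write $a_{ij}=a_{ij}^\gamma+r_{ij}^\gamma$ with $a_{ij}^\gamma\in S^{l_i+m_j}_\gamma$ and $r_{ij}^\gamma\in\calC^{t_i}_*S^{l_i+m_j-t_i(\gamma-\delta)}_\gamma$. Then $A(x,D)=A^\gamma(x,D)+R^\gamma(x,D)$, and the aim is to play the role of ``smooth $\Lambda$-elliptic part + lower-order perturbation'' exactly as required by the framework of Section \ref{sec:dn04}.

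I would choose $\gamma\in(\delta,1)$ sufficiently close to $\delta$ so that the hypothesis \eqref{dn05.A} still holds with $\delta$ replaced by $\gamma$, i.e. $-(1-\gamma)t_i<s-l_i<t_i$ for each $i=1,\dots,q$. This is possible precisely because the bound $-(1-\delta)t_i<s-l_i$ is strict. Next, I verify the $\Lambda$-ellipticity of the smooth system $A^\gamma$. The defining estimate of $\calC^{t_i}_*S^{*}_\gamma$ gives the pointwise bound $|r^\gamma_{ij}(x,\xi)|\le C\spk{\xi}^{l_i+m_j-t_i(\gamma-\delta)}$, so the components of $A-A^\gamma$ are of strictly lower order than $l_i+m_j$. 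The proof of Corollary \ref{dn03.2.5} uses only these pointwise estimates together with the pointwise bounds for $(A-\lambda)^{-1}$ from Lemma \ref{dn03.2}; the smoothness of the symbols plays no role in the $\alpha=\beta=0$ argument. Hence that argument applies verbatim with our Hölder $A$, yielding that $A^\gamma$ is $\Lambda$-elliptic in the sense of Definition \ref{dn02.4}.

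Third, I verify the perturbation hypothesis \eqref{dn04.B} for $K:=R^\gamma(x,D)$. By Proposition \ref{dn05.2} applied to the symbol $r^\gamma_{ij}$ (of order $l_i+m_j-t_i(\gamma-\delta)$ and regularity $t_i$), the operator $r^\gamma_{ij}(x,D)$ maps $H^{\sigma+l_i+m_j-t_i(\gamma-\delta)}_p\to H^\sigma_p$ whenever $-(1-\gamma)t_i<\sigma<t_i$. Taking $\sigma=s-l_i$, this range condition is exactly what our choice of $\gamma$ secures, and we obtain
\[
 R^\gamma_{ij}:H^{s+m_j-\varepsilon_i}_p(\rz^n)\lra H^{s-l_i}_p(\rz^n),\qquad \varepsilon_i:=t_i(\gamma-\delta)>0.
\]
Setting $\varepsilon:=\min_{1\le i\le q}\varepsilon_i>0$, the condition \eqref{dn04.B} is met. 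Theorem \ref{dn04.5}, applied to $\calA=A^\gamma(x,D)+R^\gamma(x,D)$ with this $K$, then furnishes an $\alpha_0\ge 0$ such that $\calA+\alpha=A(x,D)+\alpha$ admits a bounded $H_\infty$-calculus on $\calH$ for all $\alpha\ge\alpha_0$.

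The main obstacle is the simultaneous choice of $\gamma$: we need $\gamma>\delta$ to gain a positive $\varepsilon$ from symbol smoothing, but $\gamma-\delta$ must be small enough that the Sobolev mapping condition $-(1-\gamma)t_i<s-l_i$ survives for every $i$. Since \eqref{dn05.A} is strict this is straightforward, but it is the place where the hypothesis of Theorem \ref{dn05.4} is genuinely used. A secondary point requiring care is the adaptation of Corollary \ref{dn03.2.5} to Hölder symbols; as noted above, the pointwise nature of the argument makes this a routine check rather than a real difficulty.
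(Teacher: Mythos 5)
Your proof is correct and follows essentially the same route as the paper's: symbol smoothing (Proposition \ref{dn05.3}) with a $\gamma\in(\delta,1)$ chosen close enough to $\delta$ to preserve \eqref{dn05.A}, identification of $R^\gamma(x,D)$ as a perturbation satisfying \eqref{dn04.B} via Proposition \ref{dn05.2}, and then an appeal to Theorem \ref{dn04.5}. Your proposal also explicitly verifies the $\Lambda$-ellipticity of the smoothed system $A^\gamma$ (via the pointwise perturbation argument underlying Corollary \ref{dn03.2.5}), a step the paper's proof passes over in silence; this is a worthwhile addition since Theorem \ref{dn04.5} requires it.
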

\begin{proof}
Condition \eqref{dn05.A} together with Proposition \ref{dn05.2} ensures that $A(x,D)$ has 
the requested mapping property. By assumption on $s$ we can choose a $\gamma\in(\delta,1)$ 
such that \eqref{dn05.A} remains valid if we replace $\delta$ by $\gamma$. 
Then we choose symbols $a_{ij}^\gamma\in S^{l_i+m_j}_\gamma$ that correspond to $a_{ij}$ 
in the sense of Proposition \ref{dn05.3}. Then 
$\eps=\min\limits_{i=1}^q t_i(\gamma-\delta)$ 
is positive and it follows that 
 $$r^\gamma_{ij}(x,D):H^{s+m_j-\eps}_p(\rz^n)\to H^{s+l_i}_p(\rz^n)
   \qquad\forall\;1\le i,j\le q.$$  
Hence $K^\gamma:=\big(r^\gamma_{ij}(x,D)\big)_{1\le i,j\le q}$ is a perturbation in the sense 
of \eqref{dn04.B}. Now the result follows from Theorem \ref{dn04.5}, applied to 
$A^\gamma(x,D):=\big(a^\gamma_{ij}(x,D)\big)_{1\le i,j\le q}$, since 
$A(x,D)=A^\gamma(x,D)+K^\gamma$ by construction. 
\end{proof}

%%%%%%%%%%%%%%%%%%%%%%%%%%%%%%%%%%%%%%%%%%%%%%%%%%%%%%%%%%%%%%%%%%%%%%%%%%%%%%%%%%%%%%%
%%%%%%%%%%%%%%%%%%%%%%%%%%%%%%%%%%%%%%%%%%%%%%%%%%%%%%%%%%%%%%%%%%%%%%%%%%%%%%%%%%%%%%%
\section{The generalized thermoelastic plate equations}\label{sec:dn06}

The generalized thermoelastic plate equations on $\rz^n$ consist of the system 
\begin{align}\label{dn06.A}
 \begin{split}
 v_{tt}+L v-L^\beta w&=0\\
 cw_t+L^\alpha w+L^\beta v_t&=0
 \end{split}
\end{align}
depending on the parameters $0\le\alpha,\beta\le 1$, together with the initial conditions
 $$v(0,\cdot)=v_0,\;v_t(0,\cdot)=v_1,\;w(0,\cdot)=w_0,$$
where $L=(-\Delta)^\eta$ with some $\eta>0$. This equation has been introduced independently in 
\cite{MuRa} and \cite{AmBe}. For the special choice $\eta=2$ and $\alpha=\beta=1/2$ one obtains the 
thermoelastic plate equations 
%(\cite{Lagn})
\begin{align*}
 v_{tt}+a\Delta^2 v-b\Delta w&=0\\
 cw_t+d\Delta w+b\Delta v_t&=0. 
\end{align*}
Introducing the new variable $u=(w, v_t,L^{1/2}v)$, the system \eqref{dn06.A} can equivalently be expressed as 
\begin{equation}\label{dn06.B}
 u_t+\wt{A}(D)u=0,\qquad
 \wt{A}(\xi)=
 \begin{pmatrix}
   |\xi|^{2\alpha\eta}&|\xi|^{2\beta\eta}&0\\-|\xi|^{2\beta\eta}&0&|\xi|^{\eta}\\0&-|\xi|^{\eta}&0
 \end{pmatrix}.
\end{equation}
If we now let $\chi(\xi)$ be an arbitrary fixed 0-excision function, then $A(\xi):=\chi(\xi)\wt{A}(\xi)$ is a 
$3\times 3$-Douglis-Nirenberg system, according to the following choice of orders: 
\begin{align*}
 m_1&=2\eta(\alpha-\beta),& m_2&=0,& m_3&=2\eta\big(\mbox{$\frac{1}{2}$}+\alpha-2\beta\big),\\
 l_1&=2\beta\eta,& l_2&=2\eta(2\beta-\alpha),& l_3&=\eta.
\end{align*}
Correspondingly, we have 
\begin{equation}\label{dn06.C}
 r_1=2\eta\alpha,\qquad r_2=2\eta(2\beta-\alpha),\qquad r_3=2\eta(1+\alpha-2\beta).
\end{equation}

\begin{lemma}\label{dn06.1}
Assume that the parameters $0\le\alpha,\beta\le 1$ fulfill the conditions 
\begin{equation}\label{dn06.D}
 \alpha>\beta\quad\text{and}\quad 2\beta-\alpha>\mbox{$\frac{1}{2}$}.
\end{equation}
Then the numbers from \eqref{dn06.C} satisfy $r_1>r_2>r_3>0$ and $A(D)$ is $\Lambda$-elliptic for any 
sector $\Lambda$ which does not contain the positive half-axis. 
\end{lemma}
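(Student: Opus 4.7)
The plan is to prove the two assertions separately. The ordering $r_1 > r_2 > r_3 > 0$ is routine arithmetic using \eqref{dn06.C}: $r_1 > r_2 \Leftrightarrow \alpha > \beta$ and $r_2 > r_3 \Leftrightarrow 2\beta - \alpha > \tfrac12$, which are exactly the hypotheses \eqref{dn06.D}; the positivity $r_3 > 0$ reads $2\beta - \alpha < 1$, which follows from $\alpha > \beta$ together with $\beta \le 1$, since then $2\beta - \alpha < 2\beta - \beta = \beta \le 1$.

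For $\Lambda$-ellipticity I will verify Definition \ref{dn02.4} directly. Setting $t := |\xi|^\eta$, expansion along the first row of $\wt A - \lambda I$ yields
\begin{equation*}
 P(\xi,\lambda) = \det(\wt A(\xi) - \lambda I) = (t^{2\alpha} - \lambda)(\lambda^2 + t^2) - t^{4\beta}\lambda,
\end{equation*}
a cubic in $\lambda$ with real, $t$-dependent coefficients. I will show that for all sufficiently large $t$ this cubic has three real positive roots $\lambda_j(t)$ with $\lambda_j(t) \asymp \langle\xi\rangle^{r_j}$; the factorization $-P = \prod_{j=1}^3(\lambda - \lambda_j(t))$ combined with the standard separation estimate $|\lambda - \rho| \ge c(\theta)(|\lambda| + \rho)$, valid for $\lambda \in \Lambda(\theta)$ and $\rho \ge 0$ with some $c(\theta) > 0$, then immediately gives
\begin{equation*}
 |P(\xi,\lambda)| \ge c(\theta)^3 \prod_{j=1}^3\bigl(|\lambda| + \lambda_j(t)\bigr) \ge C \prod_{j=1}^3\bigl(\langle\xi\rangle^{r_j} + |\lambda|\bigr),
\end{equation*}
which is precisely Definition \ref{dn02.4}, uniformly for every sector $\Lambda$ avoiding the positive half-axis.

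To locate the roots I rescale. Substituting $\lambda = t^{2\alpha}\mu$ and dividing $-P$ by $t^{6\alpha}$ produces $p_t(\mu) := \mu^3 - \mu^2 + (\epsilon_1 + \epsilon_2)\mu - \epsilon_1$ with $\epsilon_1 := t^{2-4\alpha}$ and $\epsilon_2 := t^{4\beta - 4\alpha}$. The hypotheses \eqref{dn06.D} force $\alpha > \beta > \tfrac12$ (from $\beta < \alpha < 2\beta - \tfrac12$), so both $\epsilon_1, \epsilon_2 \to 0$ as $t \to \infty$ and $p_t \to p_\infty(\mu) := \mu^2(\mu - 1)$ coefficient-wise. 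By Hurwitz's theorem $p_t$ has, for large $t$, exactly one zero near the simple root $\mu = 1$ of $p_\infty$ — necessarily real since otherwise its complex conjugate would be a second nearby root — giving $\lambda_1(t) \sim t^{2\alpha}$. The two remaining roots cluster at the double zero $\mu = 0$; in that region the cubic term is negligible compared to $\mu^2$, so they are well approximated by the zeros of the quadratic $\mu^2 - (\epsilon_1 + \epsilon_2)\mu + \epsilon_1 = 0$. Its discriminant $(\epsilon_1 + \epsilon_2)^2 - 4\epsilon_1$ is asymptotically equivalent to $\epsilon_2^2$, and positivity of this dominant term requires $8\beta - 8\alpha > 2 - 4\alpha$, i.e.\ $2\beta - \alpha > \tfrac12$ — the second hypothesis of \eqref{dn06.D}. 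Thus both quadratic roots are real for large $t$, and Vieta (positive sum $\epsilon_1 + \epsilon_2$, positive product $\epsilon_1$) forces them positive, of sizes $\epsilon_2$ and $\epsilon_1/\epsilon_2 = t^{2-4\beta}$. Unrescaling gives $\lambda_2(t) \asymp t^{4\beta - 2\alpha} = \langle\xi\rangle^{r_2}$ and $\lambda_3(t) \asymp t^{2 + 2\alpha - 4\beta} = \langle\xi\rangle^{r_3}$, as required. The main obstacle is precisely this root-tracking near the \emph{double} zero $\mu = 0$ of $p_\infty$: a naive implicit function argument does not apply there, and one must go through the quadratic approximation plus the discriminant sign check. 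The condition $2\beta - \alpha > \tfrac12$ plays a double role — it orders $r_2 > r_3$ and simultaneously secures the reality of the two smaller eigenvalues.
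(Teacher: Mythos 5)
Your proof is correct in substance, but it takes a genuinely different path from the paper's. The paper checks Definition~\ref{dn02.5} directly: it computes $\det\bigl(A[\kappa](\xi)-\lambda E_\kappa\bigr)$ for $\kappa=1,2,3$, and because $E_\kappa$ has only the single nonzero entry in the $(\kappa,\kappa)$-slot, each such determinant is \emph{linear} in $\lambda$ and factors in closed form as $|\xi|^{r_1+\cdots+r_{\kappa-1}}\,(|\xi|^{r_\kappa}-\lambda)$. The estimate \eqref{dn01.D.5} then follows in one line from exactly the separation inequality $|s-\lambda|\ge c(\theta)(|s|+|\lambda|)$ that you also use. You instead verify Definition~\ref{dn02.4}, which forces you to control the full degree-three characteristic polynomial $P(\xi,\lambda)$ and hence to locate all three eigenvalues of $\wt A(\xi)$; the rescaling $\lambda=t^{2\alpha}\mu$ and the subsequent root-tracking do yield the correct conclusion that the spectrum is real, positive, and of sizes $\asymp\langle\xi\rangle^{r_j}$, but this is considerably heavier machinery than the problem demands. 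What your route buys is an explicit asymptotic description of the spectrum of $\wt A(\xi)$, which is of independent interest; what it costs is the delicate perturbation analysis near the degenerate root $\mu=0$ of $p_\infty$. On that last point, a word of caution: the claim that the two small roots of the cubic are ``well approximated'' by the roots of the truncated quadratic $\mu^2-(\epsilon_1+\epsilon_2)\mu+\epsilon_1$ needs justification at \emph{both} scales, $\mu\sim\epsilon_2$ and $\mu\sim\epsilon_1/\epsilon_2$. For the smaller one you must verify that $\mu^3\sim(\epsilon_1/\epsilon_2)^3$ is negligible against $\epsilon_1$, which amounts to $3\beta-\alpha>1$; this does hold since $3\beta-\alpha=(2\beta-\alpha)+\beta>\tfrac12+\tfrac12$, but it should be said. (Alternatively, a sign-change argument for $p_t$ at $\mu=0,\ \epsilon_2/2,\ 2\epsilon_2,\ 1$ produces the three real positive roots with the claimed magnitudes without any appeal to Hurwitz or to the quadratic approximation.) Given Theorem~\ref{dn02.6} the two definitions are equivalent, so either route is legitimate; but the paper's exploitation of the $E_\kappa$ structure sidesteps the root-tracking entirely and is the more economical argument here.
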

\begin{proof}
By direct computation we find that 
 $$\mathrm{det}\big(A[\kappa](\xi)-\lambda E_\kappa\big)=
   \begin{cases}
   |\xi|^{r_1}-\lambda&\quad:\kappa=1\\
   |\xi|^{r_1}(|\xi|^{r_2}-\lambda)&\quad:\kappa=2\\
   |\xi|^{r_1+r_2}(|\xi|^{r_3}-\lambda)&\quad:\kappa=3
   \end{cases}.$$
Now the claim follows by observing that 
 $$|s-\lambda|^2=|s|^2-2\,s\,\re\lambda+|\lambda|^2\ge\min(1,1-\cos\theta)(|s|^2+|\lambda|^2)$$ 
for any positive real $s$ and $\lambda\in\Lambda=\Lambda(\theta)$. 
\end{proof}

The assumption of strict inequalities in \eqref{dn06.D} is made to ensure the validity of \eqref{dn02.B}. 
However, as already remarked in the paragraph following \eqref{dn02.B}, we could also admit equalities 
in \eqref{dn06.D}. 

Next, we observe that $K:=\wt{A}(D)-A(D)$ is a smoothing operator in the sense that each component maps 
$H^s_p(\rz^n)$ to $H^t_p(\rz^n)$ for arbitrary $s,t\in\rz$ and $1<p<\infty$. In fact, a component of $K$ has 
the form $k(D)$ for $k(\xi)=(1-\chi)(\xi)|\xi|^\eps$ with $\eps>0$. Then the desired mapping property of 
$k(D)$ is, by composition with the order reductions $\spk{D}^s$ and $\spk{D}^t$, equivalent to the property 
that $\wt{k}(D)\in\calL(L_p(\rz^n))$ for $\wt{k}(\xi)=(1-\chi)(\xi)\spk{\xi}^{t-s}|\xi|^\eps$. However, this 
is true by the Mikhlin multiplier theorem, for example. Hence, applying Theorem \ref{dn04.5}, we obtain 
(generalizing the results of Section 3 in \cite{DeRa}): 

\begin{theorem}\label{dn06.2}
Let $\alpha,\beta$ fulfill \eqref{dn06.D}. Consider $\wt{A}(D)$ from \eqref{dn06.B} as an unbounded operator in 
 $$\calH:=H^{s-2\beta\eta}_p(\rz^n)\oplus H^{s-2\eta(2\beta-\alpha)}_p(\rz^n)\oplus H^{s-\eta}_p(\rz^n)$$
with the domain 
 $$\calD:=H^{s+2\eta(\alpha-\beta)}_p(\rz^n)\oplus H^{s}_p(\rz^n)\oplus H^{s+2\eta(1/2+\alpha-2\beta)}_p(\rz^n)$$
with some $s\in\rz$ and $1<p<\infty$. Then there exists an $\lambda_0\ge0$ such that $\wt{A}(D)+\lambda$ admits 
a bounded $H_\infty$-calculus for any $\lambda\ge\lambda_0$. In particular, if $1<q<\infty$ and $T>0$ are given, 
the equation 
 $$u_t+\wt{A}(D)u=f(t),\qquad u(0)=u_0,$$
has for each right-hand side $f\in L_q([0,T],\calH)$ and each initial value 
$u_0\in(\calD,\calH)_{\frac{1}{q},q}$\footnote{where $(\cdot,\cdot)_{\theta,q}$ refers to the real 
interpolation space} 
a unique solution belonging to $W^1_q([0,T],\calH)\,\cap\,L_q([0,T],\calD)$ 
which depends continuously on $f$ and $u_0$. 
\end{theorem}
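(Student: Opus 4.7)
The plan is to recognize this theorem as essentially an application of Theorem \ref{dn04.5}, combined with the standard passage from bounded $H_\infty$-calculus to maximal $L_q$-regularity. The work has been pre-positioned so that the decomposition $\wt{A}(D) = A(D) + K$ already separates a smooth $\Lambda$-elliptic Douglis-Nirenberg principal part from a residual perturbation.

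First, I would observe that the spaces $\calD$ and $\calH$ are exactly the canonical domain/codomain pair for $A(D)$ regarded as a Douglis-Nirenberg system with the orders $l_i, m_i$ displayed after equation \eqref{dn06.B}, so the abstract setup of Section \ref{sec:dn04} applies with $\calA := \wt{A}(D) = A(D) + K$. By Lemma \ref{dn06.1}, under the assumption \eqref{dn06.D} and the strict ordering $r_1 > r_2 > r_3 > 0$, the system $A(D)$ is $\Lambda$-elliptic for every subsector $\Lambda = \Lambda(\theta)$ with $\theta < \pi$, in particular for some $\theta < \pi/2$ so that we stay in the parabolic regime.

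Next I would verify that $K = \wt{A}(D) - A(D)$ is an admissible perturbation in the sense of \eqref{dn04.B}. This is exactly what the paragraph preceding the theorem accomplishes: each component of $K$ is a Fourier multiplier with symbol $(1-\chi)(\xi)|\xi|^\rho$ which, by the Mikhlin multiplier theorem applied to $(1-\chi)(\xi)\langle\xi\rangle^{t-s}|\xi|^\rho$, maps $H^s_p(\rz^n)$ to $H^t_p(\rz^n)$ for arbitrary $s,t \in \rz$ and $1<p<\infty$. In particular, condition \eqref{dn04.B} is satisfied for any $\eps > 0$, so Theorem \ref{dn04.5} applies and yields $\lambda_0 \ge 0$ such that $\wt{A}(D) + \lambda$ admits a bounded $H_\infty$-calculus on $\calH$ with respect to $\Lambda$ for every $\lambda \ge \lambda_0$.

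For the evolution-equation part, I would invoke the standard implication chain recalled in the introduction: a bounded $H_\infty$-calculus with respect to a sector strictly larger than the right half-plane gives bounded imaginary powers with appropriate angle, and the Dore-Venni theorem then delivers maximal $L_q$-regularity on $[0,T]$. The characterization of admissible initial data as elements of the real interpolation space $(\calD,\calH)_{1/q,q}$ is the classical trace theorem for the space $W^1_q([0,T],\calH)\cap L_q([0,T],\calD)$, which is independent of the particular operator once maximal regularity is known. No obstacle of substance arises here; the main point requiring care is bookkeeping, namely checking that the sector chosen for the $H_\infty$-calculus is compatible with the angle constraints of Dore-Venni (which is why $\theta < \pi/2$ in Lemma \ref{dn06.1} is essential) and that the strict inequalities in \eqref{dn06.D} match the hypothesis \eqref{dn02.B} used throughout Sections \ref{sec:dn03}--\ref{sec:dn04}. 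Since the theorem is explicitly stated under \eqref{dn06.D}, everything aligns and the proof reduces to citing the prior machinery.
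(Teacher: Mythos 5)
Your proposal is correct and follows essentially the same route as the paper: decompose $\wt{A}(D) = A(D) + K$ with $A(\xi)=\chi(\xi)\wt{A}(\xi)$, invoke Lemma \ref{dn06.1} for $\Lambda$-ellipticity, verify that $K$ is a smoothing (hence admissible) perturbation via the Mikhlin multiplier theorem, and apply Theorem \ref{dn04.5}; the maximal-regularity statement then follows from the standard $H_\infty$-calculus $\Rightarrow$ bounded imaginary powers $\Rightarrow$ Dore--Venni chain together with the classical trace theorem. The paper leaves this last step implicit (it just appends the maximal-regularity conclusion to the theorem after citing Theorem \ref{dn04.5}), so your explicit mention of Dore--Venni and the trace space $(\calD,\calH)_{1/q,q}$ is a useful elaboration rather than a deviation.
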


%%%%%%%%%%%%%%%%%%%%%%%%%%%%%%%%%%%%%%%%%%%%%%%%%%%%%%%%%%%%%%%%%%%%%%%%%%%%%%%%%%%%%%%
%%%%%%%%%%%%%%%%%%%%%%%%%%%%%%%%%%%%%%%%%%%%%%%%%%%%%%%%%%%%%%%%%%%%%%%%%%%%%%%%%%%%%%%
\section{Further extensions}\label{sec:dn07}

Instead of working with the scale of Sobolev spaces $H^s_{p}(\rz^n)$ we also could have chosen to consider 
Besov-Triebel-Lizorkin spaces $B^s_{p,q}(\rz^n)$ and $F^s_{p,q}(\rz^n)$ with $1<p,q<\infty$, and also the H\"older spaces ${c}^s_*(\rz^n)$, $s>0$, which are defined as the closure of $\calC^\infty_b(\rz^n)$ 
in $\calC^s_*(\rz^n)$.\footnote{smooth functions whose derivatives of any order are bounded} 
This is due to the fact that the only property needed for the proofs is that pseudodifferential operators act continuously 
in the scale, in a sense analogous to Theorem \ref{dn02.2}. Therefore, all our results of 
Sections \ref{sec:dn03} to \ref{sec:dn06} remain valid 
in these other scales of spaces. Also they remain true for systems on compact manifolds. 

%%%%%%%%%%%%%%%%%%%%%%%%%%%%%%%%%%%%%%%%%%%%%%%%%%%%%%%%%%%%%%%%%%%%%%%%%%%%%%%%%%%%%%%
%%%%%%%%%%%%%%%%%%%%%%%%%%%%%%%%%%%%%%%%%%%%%%%%%%%%%%%%%%%%%%%%%%%%%%%%%%%%%%%%%%%%%%%
%\newpage
\begin{small}
\bibliographystyle{amsalpha}

\begin{thebibliography}{99}

\bibitem{AmBe}
F. Ammar Khodja, A. Benabdallah. Sufficient conditions for uniform stabilization of second 
order equations by dynamical contollers. 
{\em Dynam. Contin. Discrete Impuls. Systems} {\bf 7} (2000), 207-222. 

\bibitem{CSS} 
S.\ Coriasco, E.\ Schrohe, J.\ Seiler. 
Bounded $H_\infty$-calculus for differential operators on conic manifolds with boundary. 
{\em Comm.\ Part.\ Diff.\ Eq.} {\bf 32} (2007), 229-255. 

\bibitem{DDHPV} 
R.\ Denk, G.\ Dore, M.\ Hieber, J.\ Pr\"uss, A.\ Venni.
New thoughts on old results of R.T.\ Seeley.  
{\em Math.\ Ann.} {\bf 328} (2004), 545-583. 

\bibitem{DMV}
R. Denk, R. Mennicken, L. Volevich. The Newton Polygon and elliptic problems with parameter. 
{\em Math. Nachr.} {\bf 192} (1998), 125-157. 

\bibitem{DeRa}
R. Denk, R. Racke. $L^p$-resolvent estimates and time decay for generalized thermoelastic 
plate equations. 
{\em Electron. J. Differential Equations} 2006, no. 48, 16 pp. (electronic).

\bibitem{DoVe} 
G.\ Dore, A.\ Venni.
On the closedness of the sum of two closed operators. 
{\em Math.\ Z.} {\bf 196} (1987), 189-201. 

\bibitem{EsSe}
J. Escher, J. Seiler. Bounded $H_\infty$-calculus for pseudodifferential operators and 
applications to the Dirichlet-Neumann operator. {\em Trans. Amer. Math. Soc.}, to appear. 

\bibitem{Kozh1}
A. Kozhevnikov.
Spectral problems for pseudo-differential systems ellitic in the Douglis-Nirenberg sense
and their applications.  
{\em Math. USSR Sb.} {\bf 21} (1973), 63-79. 

\bibitem{Kozh2}
A. Kozhevnikov. 
Asymptotics of the spectrum of Douglis-Nirenberg elliptic operators on a compact manifold. 
{\em Math. Nachr.} {\bf 182} (1996), 261-294. 

\bibitem{Kuma}
H.\ Kumano-go. 
{\em Pseudo-differential Operators}.
The MIT Press, 1981. 

\bibitem{KuWe}
P.C.\ Kunstmann, L.\ Weis. 
Maximal $L_p$-regularity for parabolic equations, Fourier multiplier theorems and 
$H^\infty$-functional calculus. 
In {\em Functional Analytic Methods for Evolution equations}, 
Lecture Notes in Math.\ {\bf 1855}, Springer, 2004. 

%\bibitem{Lagn}
%J. Lagnese. {\em Boundary stabilization of thin plates}. 
%SIAM Studies Appl. Math. {\bf 10}, 1989.

\bibitem{LeSc}
H.-G. Leopold, E. Schrohe. 
Spectral invariance for algebras of pseudodifferential operators on 
Besov-Triebel-Lizorkin spaces. 
{\em Manuscripta Math.} {\bf 78} (1993), 99-110.

\bibitem{Mcin}
A.\ McIntosh. Operators which have an $H_\infty$-calculus. 
In B.\ Jeffries et al.\ (eds.), 
{\em Miniconference on Operator Theory and Partial Differential Equations}, 
Proc.\ Center Math.\ Anal.\ A.N.U.\ {\bf 14}, 1986. 

\bibitem{MuRa}
J. Munoz Rivera, R. Racke. Large solutions and smoothing properties for nonlinear 
thermoelastic systems. 
{\em J. Differential Equations} {\bf 127} (1996), 454-483.

\bibitem{Tayl}
M.E.\ Taylor. 
{\em Tools for PDE}. 
Mathematical Surveys and monographs {\bf 81}, AMS, 2000. 

\end{thebibliography}

\end{small}
%%%%%%%%%%%%%%%%%%%%%%%%%%%%%%%%%%%%%%%%%%%%%%%%%%%%%%%%%%%%%%%%%%%%%%%%%%%%%%%%%%%%%%%
%%%%%%%%%%%%%%%%%%%%%%%%%%%%%%%%%%%%%%%%%%%%%%%%%%%%%%%%%%%%%%%%%%%%%%%%%%%%%%%%%%%%%%%
\end{document}